\documentclass[11pt,reqno]{amsart}

\pagestyle{plain}
\usepackage{dsfont}
\usepackage[bottom]{footmisc}
\usepackage{amssymb, amsmath, amsthm}
\usepackage{enumerate,color}
\textwidth=15.7cm
\textheight=22.5cm
\parskip=3pt
\parindent=8mm
\oddsidemargin=2mm
\evensidemargin=0mm
\topmargin=-0.5cm
\marginparwidth=1cm
\newtheorem{thm}{Theorem}[section]
\newtheorem{lem}[thm]{Lemma}

\newtheorem{prop}[thm]{Proposition}
\newtheorem{defn}[thm]{Definition}
\newtheorem{rmk}{Remark}

\numberwithin{equation}{section}

\newcommand{\bel}{\begin{equation} \label}
\newcommand{\ee}{\end{equation}}
\def\beq{\begin{equation}}
\def\eeq{\end{equation}}
\newcommand{\bea}{\begin{eqnarray}}
\newcommand{\eea}{\end{eqnarray}}
\newcommand{\beas}{\begin{eqnarray*}}
\newcommand{\eeas}{\end{eqnarray*}}
\newcommand{\pd}{\partial}

\newcommand{\dd}{\mbox{d}}

\newcommand{\QQQ}{(0,T)\times \Omega}

\newcommand{\R}{\mathbb{R}}

\newcommand{\cA}{\mathcal{A}}

\newcommand{\cC}{\mathcal{C}}

\newcommand{\cL}{\mathcal{L}}

%


\allowdisplaybreaks

\def\epsilon{\varepsilon}
\def\phi {\varphi}

\providecommand{\abs}[1]{\left\lvert#1\right\rvert}
\providecommand{\norm}[1]{\left\lVert#1\right\rVert}

\renewcommand{\leq}{\leqslant}
\renewcommand{\geq}{\geqslant}

\providecommand{\abs}[1]{\left\lvert#1\right\rvert}
\providecommand{\norm}[1]{\left\lVert#1\right\rVert}


\title[Well-posedness for weak and strong solutions]
{\bf Well-posedness for weak and strong solutions of non-homogeneous
initial boundary value problems for fractional diffusion equations }

\author{
Yavar Kian$^1$,
and Masahiro Yamamoto$^{2,3,4}$
}

\date{}


%

\begin{document}

\begin{abstract} 
We study the well-posedness for initial boundary value problems 
associated with time fractional diffusion equations with non-homogenous 
boundary and initial values.  We consider both weak and strong solutions
for the problems. For weak solutions, we introduce a new definition of  
solutions which allows to prove the existence of solution to the initial 
boundary value problems with non-zero initial and boundary values and 
non-homogeneous terms lying in some arbitrary negative-order Sobolev spaces. 
For strong solutions, 
we introduce an optimal compatibility condition and
prove the existence of the solutions. 
We introduce also some sharp conditions guaranteeing the
existence of solutions with more regularity in time and space.\\
{\bf Keywords:} Fractional diffusion equation, initial boundary value problem, well-posedness, weak and strong solutions.\\

\medskip
\noindent
{\bf Mathematics subject classification 2010 :}  35R11, 	 	35B30,  	35R05 .
\end{abstract}

\maketitle

\renewcommand{\thefootnote}{\fnsymbol{footnote}}
\footnotetext{\hspace*{-5mm} 
\begin{tabular}{@{}r@{}p{13cm}@{}} 
\\
$^1$& Aix Marseille Universit\'e, Universit\'e de Toulon, CNRS, CPT, Marseille,
France; Email: yavar.kian@univ-amu.fr\\
$^2$& Graduate School of Mathematical Sciences, 
the University of Tokyo, 
3-8-1 Komaba, Meguro-ku, Tokyo 153-8914, Japan; \\
$^3$&Honorary Member of Academy of Romanian Scientists, Splaiul Independentei
Street, no 54, 050094 Bucharest Romania;\\
$^4$&Peoples' Friendship University of Russia (RUDN University) 
6 Miklukho-Maklaya St, Moscow, 117198, Russia;
E-mail:
myama@next.odn.ne.jp
\\

 \end{tabular}}


\section{Introduction}

\subsection{Settings}
\label{sec-settings}

Let $\Omega$ be a bounded and connected open subset of $\R^d$, $d \geq 2$, 
with $\mathcal C^2$  boundary $\partial \Omega$. 
Let $a:=(a_{i,j})_{1 \leq i,j \leq d} \in \cC^1(\overline{\Omega};\R^{d^2})$
be symmetric, that is 
$$ 
a_{i,j}(x)=a_{j,i}(x),\ x \in \Omega,\ i,j = 1,\ldots,d, 
$$
and fulfill the ellipticity condition: there exists a constant
$c>0$ such that
\bel{ell}
\sum_{i,j=1}^d a_{i,j}(x) \xi_i \xi_j \geq c |\xi|^2, \quad 
\mbox{for each $x \in \overline{\Omega},\ \xi=(\xi_1,\ldots,\xi_d) \in \R^d$}.
\ee
Assume that $q \in L^\infty(\Omega)$ satisfies 
\bel{a9}
\mbox{there exists a constant $q_0>0$ such that
$q(x)\geq q_0$ for $x \in \Omega$},
\ee
and define the operator $\cA$ by
$$ 
\cA u(x) :=-\sum_{i,j=1}^d \partial_{x_i} 
\left( a_{i,j}(x) \partial_{x_j} u(x) \right)+q(x)u(x),\  x\in\Omega. 
$$ 
Throughout the article, we set 
$$
Q:= (0,T) \times \Omega, \quad \Sigma := (0,T) \times \partial\Omega.
$$
Next, for $\alpha\in(0,1)\cup(1,2)$, $T\in(0,+\infty)$ and
$\rho \in L^\infty(\Omega)$ obeying
\bel{eq-rho}
 0<\rho_0 \leq\rho(x) \leq\rho_M <+\infty,\ x \in \Omega, 
\ee
we consider the following initial boundary value problem (IBVP):
\bel{eq1}
\left\{ \begin{array}{rcll} 
(\rho(x) \partial_t^{\alpha}+\cA ) u(t,x) & = & F(t,x), & (t,x)\in 
Q,\\
\tau_{\chi} u(t,x) & = & f(t,x), & (t,x) \in \Sigma, \\  
\pd_t^k u(0,x) & = & u_k, & x \in \Omega,\ k=0,\ldots,\lceil\alpha\rceil-1,
\end{array}
\right.
\ee
where $\chi=0,1$, $\lceil\cdot\rceil$ denotes the ceiling function:
$$
\lceil\alpha\rceil = 
\left\{ \begin{array}{rl}
& 1 \quad \mbox{if $0<\alpha<1$,}\\
& 2 \quad \mbox{if $1<\alpha<2$,}
\end{array}\right. 
$$
and $\partial_t^\alpha$ denotes the fractional Caputo derivative of order 
$\alpha$ with respect to $t$, defined by
\bel{cap} 
\pd_t^\alpha u(t,x):=\frac{1}{\Gamma(\lceil\alpha\rceil-\alpha)}\int_0^t(t-s)^{\lceil\alpha\rceil-1-\alpha}\pd_s^{\lceil\alpha\rceil} u(s,x) \dd s,\ (t,x) 
\in Q.
\ee
Here the boundary operators $\tau_{\chi}$, $\chi=0,1$, are defined by:
\begin{enumerate}[(a)] 
\item $\tau_0 u:=u$,
\item $\tau_1 u:= \partial_{\nu_\mathcal A} u$, where $\partial_{\nu_\mathcal A}$ stands for the normal derivative with respect to 
$a=(a_{i,j})_{1 \leq i,j \leq d}$, and is given by
$$\partial_{\nu_\mathcal A }h(x) := \sum_{i,j=1}^d a_{i,j}(x) \partial_{x_j} h(x) \nu_i(x),\  x \in \partial\Omega, $$
and $\nu=(\nu_1,\ldots,\nu_d)$ is the outward unit normal vector to $\partial\Omega$. 
\end{enumerate}

\begin{rmk}
We can omit the condition \eqref{a9} but for simplicity, we assume it.
All the results of this article can be easily extended to Robin boundary 
conditions. 
Moreover, applying the fixed point argument, one can extend 
our results to a more general equation:
$$
\partial_t^\alpha u+\mathcal Au +B(t,x)\cdot\nabla_x u+V(t,x)u=0,
$$
where some suitable assumptions are imposed on the coefficients 
$B\in L^\infty(Q)^d$, $V\in L^\infty(Q)$. 
However, in this article, in order to avoid the inadequate expense of the
size of descriptions, 
we do not consider such extensions of our results.\end{rmk}

In the present article, we study the well-posedness for problem \eqref{eq1} 
in a strong and a weak senses. In the weak sense we will prove the well-posedness of \eqref{eq1} when data $f$ and  $u_0$, $u_{\lceil\alpha\rceil-1}$ are lying in some negative-order Sobolev spaces. The strong solution of  \eqref{eq1} 
corresponds to smooth solutions of this problem in time and space.

\subsection{Motivations and a short bibliographical review}

Recall that the initial boundary value problem \eqref{eq1} is often used for describing anomalous diffusion for several physical phenomenon    such as diffusion of substances in heterogeneous media, 
diffusion of 
fluid flow in inhomogeneous anisotropic porous media,  diffusion of carriers in amorphous photoconductors, diffusion in a
turbulent flow (see e.g., \cite{CSLG}). The case $\alpha\in(0,1)$ 
corresponds to a subdiffusive model, while the case $\alpha\in(1,2)$  
corresponds to a super diffusive case. 

The well-posdness for the problem \eqref{eq1} has been intensively studied these last decades.  Many authors considered problem \eqref{eq1} 
for $\alpha\in(0,1)$ with $f=0$ or $\Omega=\R^d$. 
For $\Omega=\R^d$, one can refer to 
\cite{EK} where the existence of classical solutions of \eqref{eq1} is proved by mean of a representation formula involving the Green  functions for \eqref{eq1}. This formulation of the problem has been extended by \cite{Z1}, who proposed a variational formulation of \eqref{eq1} in some abstract 
framework allowing to consider elliptic operators $\mathcal A$ depending on 
$t\in(0,T)$. Note that the 
result of \cite{Z1} can be applied to a bounded domain and the full space 
$\R^d$.
For the case $\Omega = \R^d$ or the half space, in some $L^p$-space, the 
works \cite{DK1,DK2} proved the existence of 
strong solutions of \eqref{eq1} with zero initial data and non-zero 
source terms for elliptic operators $\mathcal A$ whose coefficients 
depend on time.  
For $\Omega=\R^d$, the article \cite{ACV} proved that 
given $u_0$ and bounded $F$, the solutions to \eqref{eq1} is 
H\"older continuous in time and space.  
Similar results were proved in \cite{Z2} in a bounded domain 
with both non-zero source term and initial condition. 
We refer also to \cite{KuY} as for \eqref{eq1} with a time dependent 
elliptic operators $\mathcal A$, where the authors applied the approach of 
\cite{Z1} to establish the existence of strong solutions under 
suitable assumptions. 
In \cite{KY1,SY}, the authors proved the existence of solutions to 
\eqref{eq1} in a bounded domain by mean of an eigenfunction 
representation involving the Mittag-Leffter functions. 
The definition of solutions of \cite{KY1} can be formulated in terms of Laplace transform in time of the solutions. 
On the basis of this last definition, the works \cite{KSY,LKS} proved
the existence of solutions of more complex equations than \eqref{eq1}, 
including fractional diffusion equation with distributed and variable order. 
Moreover, we can refer to the monograph \cite{KRY}.

All the above mentioned results considered \eqref{eq1} with the
homogenous boundary conditions or $\Omega=\R^d$. 
These results can be classified into two categories:
the existence of solutions of \eqref{eq1} in a weak sense and a strong sense,
according to the cases where data belong to some negative-order Sobolev spaces 
and to smoother spaces respectively.
We can refer to the series of works \cite{Ke1,KR} where 
the authors proved the existence of solutions to \eqref{eq1} for 
$u_0=u_{\lceil\alpha\rceil-1}=0$ and $f$ lying in some negative-order 
Sobolev space in time and space, when 
$\mathcal A=-\Delta$ and $\chi=0$ (i.e., the Dirichlet boundary 
condition).  The proof is based on a single layer 
representation of solutions, and an application of some results 
(e.g., \cite{EK,S}) concerning the Green  functions 
for fractional diffusion equations with constant coefficients.
In \cite{Ke2}, the author extended this approach to \eqref{eq1}
for $\alpha\in(0,1)$ with non-homogeneous Robin boundary condition and 
non-vanishing initial condition lying in some H\"older spaces. 
The article \cite{Y} proved the unique existence of weak solution to 
\eqref{eq1} by the transposition (e.g., Lions and Magenes \cite{LM1}) 
when $u_0=0$, $F=0$, $\alpha\in(0,1)$ and 
$f\in L^2(0,T;H^{-\theta}(\partial\Omega))$ with $\theta>0$.  
As for strong solutions, we refer to the work \cite{Gu} 
where the author proved the existence of strong solution $u$ 
to \eqref{eq1} such that $u$ is continuous in time, belongs to 
a class $\mathcal C^2$ in space and $\partial_t^\alpha u$ is 
H\"older continuous in time and space.  

We remark that the well-posdness for problem \eqref{eq1} with 
non-homogenous boundary conditions is meaningful also for other 
mathematical problems such as optimal control problems (see e.g., \cite{Y}) or 
inverse problems (see e.g., \cite{FK,JR,KLLY,KOSY,KY2}).

\subsection{Definition of solutions}
In this subsection we introduce a definition of solutions of \eqref{eq1} for $f\in L^1(0,T;H^{-\theta}(\partial\Omega))$ with $\theta\in\left[\frac{1}{2},
+\infty\right)$ and $(u_0,u_{\lceil\alpha\rceil-1}, F)$ in some negative-order 
Sobolev spaces. To give a suitable definition of such solutions, we define  
the operators $A_\chi$, $\chi=0,1$, in $L^2(\Omega;\rho dx)$ by 
$$
A_{\chi}u = \rho^{-1}\mathcal A u, \quad 
D(A_{\chi}) = \{g\in H^1(\Omega):\ \rho^{-1}\mathcal Ag\in L^2(\Omega),\ 
\tau_{\chi}g_{|\pd\Omega}=0\}.
$$
In view of our assumptions, we have
$D(A_{\chi})=\{g\in H^2(\Omega):\  \tau_\chi g_{|\pd\Omega}=0\}$. Recall that 
the operators $A_{\chi}$, $\chi=0,1$ 
are strictly positive self-adjoint operators 
with compact resolvent. Therefore, for $\chi=0,1$, the spectrum of $A_{\chi}$ 
consists of a non-decreasing sequence of strictly positive eigenvalues 
$(\lambda_{\chi,n})_{n\geq1}$.   Here and henceforth we number 
$\lambda_{\chi,n}$ with the multiplicities for $\chi=0,1$.
In the Hilbert space $L^2(\Omega;\rho dx)$, we introduce an orthonormal basis 
of eigenfunctions $(\phi_{\chi,n})_{n\geq1}$ of $A_{\chi}$ associated 
with the eigenvalues $(\lambda_{\chi,n})_{n\geq1}$. 
From now on, by $\left\langle \cdot, \cdot\right\rangle$ we denote
the scalar product in $L^2(\Omega;\rho dx)$ and 
we set $\mathbb N = \{1,2,\ldots\}$. 
Let us observe that according to the condition imposed on $\rho$, we have 
$L^2(\Omega;\rho dx)=L^2(\Omega)$ with the equivalent norms. 
For all $s\geq 0$, we denote by $A_{\chi}^s$ the operator defined by 
\[
A_{\chi}^s g=\sum_{n=1}^{+\infty}\left\langle g,\phi_n\right\rangle 
\lambda_{\chi,n}^s\phi_{\chi,n},\quad g\in D(A_{\chi}^s)
= \left\{h\in L^2(\Omega):\ \sum_{n=1}^{+\infty}\abs{\left\langle g,
\phi_{\chi,n}\right\rangle}^2 \lambda_{\chi,n}^{2s}<\infty
\right\}
\]
and in $D(A_{\chi}^s)$ we introduce the norm
\[\|g\|_{D(A_{\chi}^s)}
= \left(\sum_{n=1}^{+\infty}\abs{\left\langle g,
\phi_{\chi,n}\right\rangle}^2 \lambda_{\chi,n}^{2s}\right)^{\frac{1}{2}},
\quad g\in D(A_{\chi}^s).
\]
We define $D(A_{\chi}^{-s}) = D(A_{\chi}^s)'$ by the dual space 
to $D(A_{\chi}^s)$, which is a Hilbert space with 
the norm 
\[
\norm{g}_{D(A_{\chi}^{-s})}
= \left(\sum_{n=1}^\infty \abs{\left\langle g,\phi_{\chi,n}\right\rangle
_{-s}}^2\lambda_{\chi,n}^{-2s}\right)^{\frac{1}{2}}.
\]
Here $\left\langle \cdot,\cdot\right\rangle_{-s}$ denotes the duality bracket 
between $D(A_{\chi}^{-s})$ and $D(A_{\chi}^s)$. 
By the duality, we see that $D(A_{\chi}^{-\frac{1}{2}})$
is embedded continuously into $H^{-1}(\Omega)$, because
$H^1_0(\Omega)$ is embedded continuously into $D(A_{\chi}^{\frac{1}{2}})$.
\\

For any $k\in\mathbb N$, we consider the following condition $(H_k)$:
\begin{equation}\tag{$H_k$}
\rho,\ a_{i,j} \in \mathcal C^{2(k-1)+1}(\overline{\Omega}),\  
i,j = 1,\ldots,d,\quad q\in W^{2(k-1),\infty}(\Omega),\quad \partial\Omega\ 
\textrm{is of $\mathcal C^{2k}$}.
\end{equation}
In view of \cite[Theorem 2.5.1.1]{Gr} (see also 
\cite[Theorem 8.13]{GT}), for any $k\in\mathbb N$,  condition ($H_k$) implies that
the space $D(A_{\chi}^\ell)$ is embedded continuously into $H^{2\ell}(\Omega)$
for any $\ell=0,\ldots,k$ and $\chi=0,1$. 
Therefore, by the interpolation, we deduce that condition ($H_k$) implies that
the space $D(A_{\chi}^s)$ is embedded continuously into $H^{2s}(\Omega)$
for any $s\in[0,k]$.
\\

Let us fix $\theta\in\left[\frac{1}{2},+\infty\right)$, 
$\kappa= \frac{\theta}{2}-\frac{1}{4}$ and $k=1+\lceil\kappa\rceil$. 
Using the above properties and assuming that ($H_k$) is fulfilled, 
for $\mu\geq0$, $h\in H^{-\theta-\chi}(\partial\Omega)$ and 
$\Phi\in D(A_{\chi}^{-\kappa})$, we define the solution 
$y\in D(A_{\chi}^{-\kappa})$ to the following boundary value problem:
\bel{eq2}
\left\{ \begin{array}{rcll} 
\rho(x)^{-1} \cA y(x) +\mu y(x)& = & \Phi(x), & x\in   \Omega,\\
\tau_{\chi} y(x) & = & h(x), & x \in  \pd \Omega, 
\end{array}
\right.
\ee
in the transposition sense:
\bel{trans}
\left\langle y,G\right\rangle_{-\kappa}
= -(-1)^{\chi}\left\langle h,\tau_{\chi}^*(A_{\chi}+\mu)^{-1}G\right\rangle
_{H^{-\theta-\chi}(\partial\Omega),H^{\theta+\chi}(\partial\Omega)}
+ \left\langle \Phi, (A_{\chi}+\mu)^{-1}G\right\rangle_{-1-\kappa}
\ee
for all $G \in D(A_{\chi}^{\kappa})$.
Here $\tau_{\chi}^*$ denotes the formal adjoint operator to $\tau_{\chi}$,
and we have $\tau_0^*=\tau_1$ and $\tau_1^*=\tau_0$. 
Then, we define the solution to \eqref{eq1} in the following way.

\begin{defn}
\label{d1} 
Fix $\theta\in\left[\frac{1}{2},+\infty\right)$, $\kappa= \frac{2\theta-1}{4}$, $k=1+\lceil\kappa\rceil$ and assume that condition $(H_k)$ is fulfilled. 
Let $f\in L^1(0,T; H^{-\theta-\chi}(\pd\Omega))$, 
$F\in L^1(0,T;\rho D(A_{\chi}^{-\kappa -1}))$,  $u_0,u_{\lceil\alpha\rceil-1}\in  D(A_{\chi}^{-\kappa -1})$. We say that $u$ is a weak solution to
\eqref{eq1} if there exist $\epsilon>0$ and
$v\in L^1_{loc}(0,+\infty;D(A_{\chi}^{-\epsilon-\kappa}))$ satisfying  
$u=v_{| Q}$ and the following properties:\\
(i) $\inf\{\lambda>0:\ t\mapsto e^{-\lambda t}v(t,\cdot)\in 
L^1(0,+\infty;D(A_{\chi}^{-\epsilon-\kappa}))\}=0$,\\
(ii) for all $p>0$, the Laplace transform 
$$
\cL v(p,\cdot):=\int_0^{+\infty}e^{-pt}v(t,\cdot)dt
$$ 
of $v$ is lying in $D(A_{\chi}^{-\kappa})$ and it solves the boundary value problem
\bel{eq3}
\left\{ \begin{array}{rcll} 
\rho(x)^{-1} \cA \cL v(p,x) +p^\alpha \cL v(p,x)& = & \int_0^T e^{-pt}\rho^{-1}F(t,x)dt+ \sum_{m=0}^{\lceil\alpha\rceil-1}p^{\alpha-1-m}u_m(x), 
& x\in   \Omega,\\
\tau_{\chi} \cL v(p,x)(x) & = & \int_0^{T}e^{-pt}f(t,x)dt, & x \in  \pd \Omega. 
\end{array}
\right.
\ee

\end{defn}

We give also the following definition of strong solutions to \eqref{eq1}.

\begin{defn}
\label{d3} 

We say that \eqref{eq1} admits a strong solution if there exists a weak solution $u$ to \eqref{eq1} lying in $W^{\lceil\alpha\rceil,1}(0,T;H^{-1}(\Omega))\cap L^1(0,T;H^2(\Omega))$ such that 
\bel{id} 
\rho(x) \partial_t^{\alpha}u+\cA u=F
\ee
holds true in $L^1(0,T;L^2(\Omega))$ and 
\bel{id1} 
\left\{ \begin{array}{rcll} 
\tau_{\chi} u(t,x) & = & f(t,x), & (t,x) \in \Sigma , \\  
\pd_t^k u(0,x) & = & u_k(x), & x \in \Omega,\ k=0,\ldots,\lceil\alpha\rceil-1.
\end{array}
\right.
\ee

\end{defn}

\begin{rmk} Let us observe that for any function $v\in W^{\lceil\alpha\rceil,1}_{loc}(0,+\infty;H^{-1}(\Omega))\cap L^1_{loc}(0,+\infty;H^2(\Omega))$ satisfying
\bel{rr1a}
e^{-pt}v(t,\cdot)\in W^{\lceil\alpha\rceil,1}(0,+\infty;H^{-1}(\Omega))\cap L^1(0,+\infty;H^2(\Omega)),\quad p>0,\ee
we have 
$$
\mathcal L(\rho \partial_t^{\alpha}v+\cA v)(p,\cdot)
= \cA \cL v(p,\cdot) + \rho\left( p^\alpha \cL v(p,\cdot)
- \sum_{m=0}^{\lceil\alpha\rceil-1}p^{\alpha-1-m}v(0,\cdot)\right).
$$

Our choice for the definition   of weak solutions of \eqref{eq1} is based on the above  identity. Moreover, from this identity and the property of 
weak solutions stated in Remark \ref{r1} (see below), one can verify that any strong solution in the sense of Definition \ref{d3}, which can be extended to a function $v\in W^{\lceil\alpha\rceil,1}_{loc}(0,+\infty;H^{-1}(\Omega))\cap L^1_{loc}(0,+\infty;H^2(\Omega))$ satisfying \eqref{rr1a}, will be a weak solution in the sense of Definition \ref{d1}.\end{rmk}

\subsection{Well-posedness  for weak solutions}
In this subsection, we state our results of well-posedness of \eqref{eq1} 
when $f\in L^1(0,T; H^{-\theta-\chi}(\pd\Omega))$, 
$u_0,u_{\lceil\alpha\rceil-1}
\in \rho D(A_{\chi}^{-\kappa -1})$ for some $\theta\in\left[\frac{1}{2},+\infty\right)$ and $\kappa= \frac{2\theta-1}{4}$. More precisely, we consider 
the existence of weak solutions in the sense of Definition \ref{d1}. 
Our first main result can be stated as follows.

\begin{thm}
\label{t1} 
Let $\alpha\in(0,1)\cup(0,2)$, $\chi=0,1$, $\theta\in\left[\frac{1}{2},+\infty\right)$, $\kappa= \frac{\theta}{2}-\frac{1}{4}$, $k=1+\lceil\kappa\rceil$ 
and let $(H_k)$ be fulfilled. Let $r\in[1,+\infty)$,   $\beta$ be given by
$$
\beta= \left\{\begin{array}{ll}1&\ \textrm{ if }r<\alpha^{-1}\\
\alpha^{-1}r^{-1} &\ \textrm{ if } r\geq \alpha^{-1}\end{array}\right.
$$
and let $f\in L^r(0,T; H^{-\theta-\chi}(\pd\Omega))$, 
$F\in L^r(0,T;\rho D(A_{\chi}^{-\frac{\theta}{2} -\frac{3}{2}}))$. 
Consider also 
$$
\left\{\begin{array}{ll}
u_0\in  D(A_{\chi}^{\frac{1}{4}-\beta-\frac{\theta}{2}})&\ \textrm{for }
\alpha\in(0,1),\\
u_0\in  D(A_{\chi}^{-\frac{1}{\alpha r}+\frac{1}{4}-\frac{\theta}{2}}),\ u_1\in  D(A_{\chi}^{-\alpha^{-1}(1+r^{-1})+\frac{1}{4}-\frac{\theta}{2}})&\ \textrm{for }\alpha\in(1,2).
\end{array}\right.
$$
Then problem \eqref{eq1} admits a unique  solution $u$ lying in 
$$
\bigcap_{\epsilon>0}L^r(0,T;D(A_{\chi}^{-\epsilon+\frac{1}{4}
-\frac{\theta}{2}})).
$$
Moreover, for any $\epsilon>0$, we have estimates:
\\
{\bf Case $\alpha\in(0,1)$}:
\bel{t1a}
\norm{u}_{L^r(0,T;D(A_{\chi}^{-\epsilon+\frac{1}{4}-\frac{\theta}{2}}))}
\leq C_\epsilon \left(\norm{f}_{L^r(0,T; H^{-\theta-\chi}(\pd\Omega))}
+ \norm{\rho^{-1}F}_{L^r(0,T; D(A_{\chi}^{-\frac{\theta}{2} -\frac{3}{4}}))}
+ \norm{u_0}_{ D(A_{\chi}^{-\beta+\frac{1}{4}-\frac{\theta}{2}})}\right).
\ee
\\
{\bf Case $\alpha\in(1,2)$}:
\bel{t1b}
\begin{aligned}&\norm{u}_{L^r(0,T;D(A_{\chi}^{-\epsilon+\frac{1}{4}
-\frac{\theta}{2}}))}\\
&\leq C_\epsilon \left(\norm{f}_{L^r(0,T; H^{-\theta-\chi}(\pd\Omega))}
+ \norm{\rho^{-1}F}_{L^r(0,T; D(A_{\chi}^{-1-\kappa))}}
+ \norm{u_0}_{D(A_{\chi}^{-\frac{1}{\alpha r}-\kappa})}
+ \norm{u_1}_{ D(A_{\chi}^{-\frac{(1+r^{-1})}{\alpha}-\kappa})}\right).
\end{aligned}
\ee
In both cases, the constant $C_\epsilon$ depends on 
$\epsilon$, $r$, $\rho$, $\alpha$, $\theta$, $\mathcal A$, $\Omega$ and $T$. 
\end{thm}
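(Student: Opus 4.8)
The plan is to solve \eqref{eq1} on the Laplace level and then invert termwise. For each $p>0$ the only possible transform $\cL v(p,\cdot)$ is, by \eqref{eq3}, the transposition solution of \eqref{eq2} with $\mu=p^\alpha$, boundary datum $\widehat f(p)=\int_0^Te^{-pt}f(t,\cdot)\,dt$ and interior datum $\widehat\Phi(p)=\int_0^Te^{-pt}\rho^{-1}F(t,\cdot)\,dt+\sum_{m=0}^{\lceil\alpha\rceil-1}p^{\alpha-1-m}u_m$. Inserting $G=\phi_{\chi,n}$ into \eqref{trans} and using $(A_\chi+p^\alpha)^{-1}\phi_{\chi,n}=(\lambda_{\chi,n}+p^\alpha)^{-1}\phi_{\chi,n}$ yields the spectral coefficients
\[
\langle \cL v(p,\cdot),\phi_{\chi,n}\rangle=\frac{\langle\widehat\Phi(p),\phi_{\chi,n}\rangle-(-1)^\chi\langle\widehat f(p),\tau_\chi^*\phi_{\chi,n}\rangle}{\lambda_{\chi,n}+p^\alpha}.
\]
Recognising these as Laplace transforms of explicit Mittag--Leffler expressions through the pairs $\cL[t^{\alpha-1}E_{\alpha,\alpha}(-\lambda t^\alpha)](p)=(p^\alpha+\lambda)^{-1}$ and $\cL[t^mE_{\alpha,m+1}(-\lambda t^\alpha)](p)=p^{\alpha-1-m}(p^\alpha+\lambda)^{-1}$, and extending $f,F$ by zero outside $(0,T)$, I would then define the candidate
\[
v(t)=\int_0^t(t-s)^{\alpha-1}E_{\alpha,\alpha}\!\left(-(t-s)^\alpha A_\chi\right)\big(\rho^{-1}F(s)+\mathcal N_\chi f(s)\big)\,ds+\sum_{m=0}^{\lceil\alpha\rceil-1}t^mE_{\alpha,m+1}\!\left(-t^\alpha A_\chi\right)u_m,
\]
where the functional calculus is taken with respect to $A_\chi$ and $\mathcal N_\chi$ is the boundary lifting $\langle\mathcal N_\chi g,\phi_{\chi,n}\rangle=-(-1)^\chi\langle g,\tau_\chi^*\phi_{\chi,n}\rangle$.

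Granting the estimates below, the series defining $v$ converges in $L^1_{loc}(0,+\infty;D(A_\chi^{-\epsilon-\kappa}))$ with abscissa of convergence $0$, giving condition (i) of Definition \ref{d1}, while computing $\cL v(p,\cdot)$ termwise recovers the coefficients above, i.e. $\cL v(p,\cdot)\in D(A_\chi^{-\kappa})$ solves \eqref{eq3}, which is (ii); hence $u=v_{|Q}$ is a weak solution and everything reduces to \eqref{t1a}--\eqref{t1b}. These I would prove by splitting $v$ into its interior, boundary and initial parts and using the scalar bound $|E_{\alpha,\gamma}(-x)|\leq C(1+x)^{-1}$ for $x\geq0$ (with the sharper $|E_{\alpha,\alpha}(-x)|\leq C(1+x)^{-2}$ when the interior datum sits at the optimal order), which through the functional calculus yields $\|A_\chi^{b}E_{\alpha,\gamma}(-t^\alpha A_\chi)\|_{\mathcal L(L^2(\Omega))}\leq Ct^{-\alpha b}$ on the admissible range of $b$. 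For the initial terms this gives at once $\|t^mE_{\alpha,m+1}(-t^\alpha A_\chi)u_m\|_{D(A_\chi^{-\kappa-\epsilon})}\leq Ct^{m-\alpha b_m}\|u_m\|_{D(A_\chi^{-\kappa-b_m})}$; requiring the resulting time weight to lie in $L^r(0,T)$ is exactly what fixes the admissible orders of $u_0,u_1$ and produces the dichotomy defining $\beta$, the threshold $r=\alpha^{-1}$ being the value at which $t^{-\alpha}$ leaves $L^r(0,T)$. For the $\rho^{-1}F$ term the same operator bound combined with Young's convolution inequality in time closes the estimate.

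The main obstacle is the boundary term, where $f$ is only a distribution on $\partial\Omega$: a naive mode-by-mode estimate based on $\|\tau_\chi^*\phi_{\chi,n}\|_{H^{\theta+\chi}(\partial\Omega)}\leq C\lambda_{\chi,n}^{\kappa+1}$ and Weyl's law $\lambda_{\chi,n}\sim n^{2/d}$ loses the spectral summation and breaks down in high dimension. The device I would use is duality. Writing the boundary response as $R(t-s)f(s)$ with $R(\sigma)=\sigma^{\alpha-1}E_{\alpha,\alpha}(-\sigma^\alpha A_\chi)\mathcal N_\chi$, I estimate the adjoint $R(\sigma)^*$, on which the trace $\tau_\chi^*$ turns into an interior elliptic-regularity statement $\|\tau_\chi^* w\|_{H^{\theta+\chi}(\partial\Omega)}\leq C\|w\|_{D(A_\chi^{\kappa+1})}$ (this is where $(H_k)$ and $D(A_\chi^s)\hookrightarrow H^{2s}(\Omega)$ enter). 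Only the operator norm $\|A_\chi^{1-\epsilon}E_{\alpha,\alpha}(-\sigma^\alpha A_\chi)\|_{\mathcal L(L^2)}\leq C\sigma^{-\alpha(1-\epsilon)}$ then survives, so that $\|R(\sigma)\|_{H^{-\theta-\chi}(\partial\Omega)\to D(A_\chi^{-\kappa-\epsilon})}\leq C\sigma^{-1+\alpha\epsilon}\in L^1(0,T)$, and Young's inequality finishes the boundary estimate with the arbitrarily small loss $\epsilon>0$, explaining both the intersection over $\epsilon$ and the gain placing $u$ in $D(A_\chi^{1/4-\theta/2-\epsilon})$. Finally, uniqueness is immediate: by (ii) any weak solution has a Laplace transform solving \eqref{eq3}; since the transposition solution of \eqref{eq2} is unique, this transform equals the one above for every $p>0$, and injectivity of the Laplace transform under the growth condition (i) forces $u=v_{|Q}$ to coincide with the constructed solution.
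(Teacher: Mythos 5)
Your proposal is correct and follows essentially the same route as the paper: the spectral Mittag--Leffler/Duhamel representation of the solution, verification of conditions (i)--(ii) of Definition \ref{d1} through Laplace transform identities, Mittag--Leffler decay (Podlubny's formula (1.148)) plus Young's convolution inequality for the estimates \eqref{t1a}--\eqref{t1b}, and uniqueness via injectivity of the Laplace transform combined with uniqueness for the elliptic problem \eqref{eq3}. In particular, your ``duality device'' for the boundary term --- trading $\tau_\chi^*$ for the elliptic-regularity bound $\norm{\tau_\chi^* w}_{H^{\theta+\chi}(\partial\Omega)}\leq C\norm{w}_{D(A_\chi^{\kappa+1})}$ under $(H_k)$ --- is exactly the content and the proof of the paper's Lemma \ref{l1}, merely packaged as an adjoint operator-norm estimate instead of the $\ell^2$ spectral inequality \eqref{l1a}.
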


\begin{rmk}\label{r1} Note that even if the Definition \ref{d1} of weak solutions depends on the final time $T$, the solution that we obtain in Theorem \ref{t1} is independent of $T$. Namely, fix $T_1,T_2\in(0+\infty)$ with $T_1<T_2$ and consider $f\in L^r(0,T_2; H^{-\theta-\chi}(\pd\Omega))$, $\rho^{-1}F\in L^r(0,T_2; D(A_{\chi}^{-\frac{3}{4}-\frac{\theta}{2}}))$. 
Consider the unique weak solutions $u_\ell$, $\ell=1,2$ to \eqref{eq1} 
for $T=T_\ell$, which are given by Theorem \ref{t1}. 
According to the expression of the weak solution given in the proof of Theorem \ref{t1} in terms of Fourier series, we can verify that 
the restriction of $u_2$ to $(0,T_1)\times\Omega$ coincides with $u_1$.
\end{rmk}

In a special case of $r=2$ and zero initial values, we can improve
Theorem \ref{t1}:

\begin{thm}\label{c1} 
Let the condition of Theorem \ref{t1} be fulfilled 
with $r=2$, $\rho=1$ and $u_0=u_{\lceil\alpha\rceil-1}=0$, and
$\theta \ge \frac{1}{2}$. 
Then the unique weak solution $u$ of \eqref{eq1} is lying in $L^2(0,T;D(A_{\chi}^{\frac{1}{4}-\frac{\theta}{2}}))$ with $\partial_t^\alpha u\in L^2(0,T;D(A_{\chi}^{-\frac{3}{4}-\frac{\theta}{2}}))$. Moreover, we have 
\bel{c1a} \norm{u}_{L^2(0,T;D(A_{\chi}^{\frac{1}{4}-\frac{\theta}{2}}))}+\norm{\partial_t^\alpha u}_{L^2(0,T;D(A_{\chi}^{-\frac{3}{4}-\frac{\theta}{2}})}\leq C\left(\norm{f}_{L^2(0,T;H^{-\theta}(\partial\Omega))}+\norm{F}_{L^2(0,T; D(A_{\chi}^{-\frac{3}{4}-\frac{\theta}{2}}))}\right).\ee
\end{thm}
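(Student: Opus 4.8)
The plan is to exploit that the choice $r=2$ makes an exact Plancherel identity for the Laplace transform available, which is what removes the $\epsilon$-loss of Theorem \ref{t1} and, at the same time, produces the fractional-time regularity. First I would extend $F$ and $f$ by zero past $t=T$, obtaining $\tilde F\in L^2(0,+\infty;D(A_{\chi}^{-3/4-\theta/2}))$ and $\tilde f\in L^2(0,+\infty;H^{-\theta-\chi}(\pd\Omega))$, and construct a candidate $v=\sum_{n}v_n\phi_{\chi,n}$ on $(0,+\infty)$ through its eigenfunction expansion. Applying the transposition formula \eqref{trans} with $\mu=p^\alpha$ and $G=\phi_{\chi,n}$, together with the Laplace identity of the remark following Definition \ref{d3} (here $\rho=1$ and $u_0=u_{\lceil\alpha\rceil-1}=0$), the Laplace coefficients are
$$\cL v_n(p)=\frac{\langle \cL\tilde F(p),\phi_{\chi,n}\rangle-(-1)^\chi\langle \cL\tilde f(p),\psi_n\rangle_{H^{-\theta-\chi}(\pd\Omega),H^{\theta+\chi}(\pd\Omega)}}{\lambda_{\chi,n}+p^\alpha},\qquad \psi_n:=\tau_{\chi}^*\phi_{\chi,n}.$$
Once the $L^2$-bounds below are established, checking conditions (i)–(ii) of Definition \ref{d1} for $v$ is routine, so by the uniqueness in Theorem \ref{t1} the restriction $u=v_{|Q}$ is the weak solution, and the bounds on $(0,T)$ follow from those on $(0,+\infty)$ since restriction only decreases norms.

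The analytic core is a single pointwise resolvent estimate. Since $\arg((i\xi)^\alpha)=\pm\alpha\pi/2$ lies in $(-\pi,\pi)$ for $\alpha\in(0,2)$, a sector computation gives $|\lambda_{\chi,n}+(i\xi)^\alpha|\ge \cos(\alpha\pi/4)\,(\lambda_{\chi,n}+|\xi|^\alpha)$, uniformly in $n$ and $\xi$; the same bound holds throughout the right half-plane $\{\re p>0\}$ (where $\lambda_{\chi,n}+p^\alpha\neq0$), which is what justifies the Paley–Wiener passage to $p=i\xi$ and Parseval. For the interior part this bound and Plancherel give $\norm{v_n^F}_{L^2(0,+\infty)}\le C\lambda_{\chi,n}^{-1}\norm{\langle F,\phi_{\chi,n}\rangle}_{L^2}$, and summing $\lambda_{\chi,n}^{1/2-\theta}\norm{v_n^F}_{L^2}^2=\lambda_{\chi,n}^{-3/2-\theta}\cdots$ reproduces precisely the $D(A_{\chi}^{1/4-\theta/2})$ bound in \eqref{c1a}, with no $\epsilon$.

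The delicate point is the boundary part, since the naive bound $|\langle \cL\tilde f,\psi_n\rangle|\le\norm{\cL\tilde f}_{H^{-\theta-\chi}}\norm{\psi_n}_{H^{\theta+\chi}}$ over-counts and makes the mode sum diverge. I would instead argue by duality: estimating $\norm{\sum_n \lambda_{\chi,n}^{s}(\lambda_{\chi,n}+z)^{-1}\langle g,\psi_n\rangle\phi_{\chi,n}}_{D(A_{\chi}^{s})}$ (with $s=1/4-\theta/2$ and $z=(i\xi)^\alpha$) against $\norm{g}_{H^{-\theta-\chi}(\pd\Omega)}$ reduces, through its adjoint, to the boundedness of $\tau_{\chi}^*:D(A_{\chi}^{\theta/2+3/4})\to H^{\theta+\chi}(\pd\Omega)$. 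The latter holds under $(H_k)$ because $D(A_{\chi}^{\theta/2+3/4})\hookrightarrow H^{\theta+3/2}(\Omega)$ and the trace theorem loses $3/2$ for the conormal derivative $\tau_0^*=\tau_1$ and $1/2$ for the trace $\tau_1^*=\tau_0$, matching the target orders $\theta$ and $\theta+1$. Inserting the weights $b_n=\bar c_n\lambda_{\chi,n}^{s}(\overline{\lambda_{\chi,n}+z})^{-1}$ into $\norm{\sum_n b_n\psi_n}_{H^{\theta+\chi}}\le C(\sum_n\lambda_{\chi,n}^{\theta+3/2}|b_n|^2)^{1/2}$, the exponent becomes $\theta+3/2+2s=2$, so the factor is $\lambda_{\chi,n}^2/|\lambda_{\chi,n}+z|^2\le\cos(\alpha\pi/4)^{-2}$, uniformly in $n$ and $z$. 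This yields a uniform-in-$\xi$ bound $H^{-\theta-\chi}(\pd\Omega)\to D(A_{\chi}^{1/4-\theta/2})$, and integrating in $\xi$ gives the $f$-contribution to \eqref{c1a}.

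Finally, for $\partial_t^\alpha u$: from \eqref{eq3} with $\rho=1$ and vanishing initial data, $p^\alpha\cL v=\cL\tilde F-\cA\cL v$, so the $n$-th multiplier of $\partial_t^\alpha u$ is $p^\alpha/(\lambda_{\chi,n}+p^\alpha)$, which the same resolvent bound controls by $\cos(\alpha\pi/4)^{-1}$. Rerunning the interior and duality estimates with target exponent $s'=-3/4-\theta/2$ (for which $\theta+3/2+2s'=0$) gives $\partial_t^\alpha u=F-\cA u\in L^2(0,T;D(A_{\chi}^{-3/4-\theta/2}))$ with the claimed bound, the identity being read through the Laplace transform. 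I expect the genuine obstacle to be exactly the boundary duality step — obtaining the uniform-in-$\xi$ operator bound without the divergent mode-by-mode loss — in tandem with the careful Paley–Wiener justification of Plancherel on the imaginary axis; the interior term and the $\partial_t^\alpha u$ bound are then essentially corollaries of the same resolvent estimate.
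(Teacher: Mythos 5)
Your proposal is correct, but it takes a genuinely different route from the paper's. The paper's proof is a short reduction: starting from the Fourier representation of the weak solution already built in the proof of Theorem \ref{t1}, it observes that $A_{\chi}^{-1-\kappa}u$ (with $\kappa=\frac{\theta}{2}-\frac{1}{4}$) is precisely the solution of the \emph{homogeneous-boundary} problem $\partial_t^\alpha w+A_{\chi}w=G$ with zero initial data, where $G=\sum_n G_n\phi_{\chi,n}$ packages the boundary and interior data; Lemma \ref{l1} gives $G\in L^2(Q)$ with $\norm{G}_{L^2(Q)}\leq C\left(\norm{f}_{L^2(0,T;H^{-\theta}(\partial\Omega))}+\norm{F}_{L^2(0,T;D(A_{\chi}^{-1-\kappa}))}\right)$, and then the known maximal $L^2$-regularity result \cite[Lemma A.2]{KY1} (see also \cite[Theorem 2.2]{SY}) is invoked to obtain $A_{\chi}^{-1-\kappa}u\in L^2(0,T;D(A_{\chi}))$ and $\partial_t^\alpha A_{\chi}^{-1-\kappa}u\in L^2(Q)$, from which \eqref{c1a} follows by shifting powers of $A_{\chi}$. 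You instead reprove that maximal regularity from scratch in the frequency domain: extension by zero, Paley--Wiener/Plancherel on the imaginary axis, the uniform sector bound $|\lambda_{\chi,n}+(i\xi)^\alpha|\geq\cos(\alpha\pi/4)\,(\lambda_{\chi,n}+|\xi|^\alpha)$, and a duality estimate for the boundary contribution. Note that your key boundary step, the boundedness of $\tau_{\chi}^*:D(A_{\chi}^{\theta/2+3/4})\to H^{\theta+\chi}(\partial\Omega)$ dualized into the mode sum, is exactly the content of Lemma \ref{l1} (since $\theta/2+3/4=1+\kappa$), so you could simply cite it rather than re-derive it; likewise, by the uniqueness in Theorem \ref{t1} you may start directly from the eigenfunction representation instead of constructing $v$ anew. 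What your route buys: it is self-contained (no appeal to \cite{KY1,SY}), it handles $\alpha\in(0,1)$ and $\alpha\in(1,2)$ simultaneously through the single sector estimate, and it makes transparent why $r=2$ is the special exponent (an exact Plancherel identity replaces Young's inequality, which is what removes the $\epsilon$-loss). What the paper's route buys is brevity. Two technical debts you should settle in a full write-up: (a) the passage to $p=i\xi$ requires $t^{\alpha-1}E_{\alpha,\alpha}(-\lambda_{\chi,n}t^\alpha)\in L^1(0,+\infty)$, which holds by the asymptotics $|E_{\alpha,\alpha}(-x)|\leq C(1+x^2)^{-1}$ (the $x^{-1}$ term is absent because $1/\Gamma(0)=0$); and (b) the identification of the multiplier $p^\alpha$ with the Caputo derivative $\partial_t^\alpha u$ in the time domain, which uses the vanishing of all initial data.
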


Under the assumption $F\equiv 0$, 
Theorem 4.1 in \cite{Y} established the same conclusion in the case of
$0<\alpha<1$ and arbitrary $\theta>0$.  On the other hand, this theorem 
holds for $\alpha \in (0,1)\cup (1,2)$ and non-zero $F$ but requires 
that $\theta \ge \frac{1}{2}$.

\subsection{Well-posedness for strong solutions} 

In this subsection, we state our results related to the well-posedness of strong solutions of \eqref{eq1}. We treat separately the case $\alpha\in(0,1)$ and $\alpha\in(1,2)$. Let us start with $\alpha\in(0,1)$. For this purpose,  we introduce a compatibility condition on data 
$f\in W^{1,1}(0,T;H^{-\frac{1}{2}-\chi}(\partial\Omega))$, $F\in W^{1,1}(0,T;\rho D(A_{\chi}^{-1}))$, and $u_0\in L^2(\Omega)$ which requires 
that $u_0$ solves the boundary value problem 
in the transposition sense:
\bel{com1}
\left\{ \begin{array}{rcll} 
 \cA u_0(x)& = &  F(0,x), & x\in   \Omega,\\
\tau_{\chi} u_0(x) & = & f(0,x), & x \in  \pd \Omega, \quad \chi=0,1. 
\end{array}
\right.
\ee
We can now state our result for $\alpha\in(0,1)$.

\begin{thm}
\label{t3} 
Let $\alpha\in(0,1)$, $\chi=0,1$, $\rho\in\mathcal C^2(\overline{\Omega})$ and 
$m\in\mathbb N$ be fixed. 
Assume also that condition $(H_{m})$ is fulfilled. 
Let $u_0\in L^2(\Omega)$, $r\in[1,+\infty)$ and
$$
f\in \bigcap_{k=0}^m W^{m-k,r}(0,T; H^{2k-\frac{1}{2}-\chi}(\pd\Omega)),
\quad F\in \bigcap_{k=1}^m W^{m-k,r}(0,T; H^{2(k-1)}(\Omega))
\cap W^{m,r}(0,T;\rho D(A_{\chi}^{-1})).
$$
For $m\geq2$, we assume that
\bel{tt3a}
\partial_t^{k}f(0,\cdot)=0,\quad \partial_t^{k}F(0,\cdot)=0,\quad k=1,\ldots,
m-1.
\ee
If $u_0$ satisfies the compatibility condition \eqref{com1}, then
the unique weak solution $u$ to \eqref{eq1} is a strong solution lying in
$$
\bigcap_{k=1}^m W^{m-k,r}(0,T; H^{2k}(\Omega))\cap 
W^{m,r}(0,T;H^{-\epsilon}(\Omega)),
$$
where $\epsilon>0$ is arbitrary.
Moreover, for any $\epsilon > 0$, we have
\bel{t3a}
\begin{aligned}
&\norm{u}_{W^{m,r}(0,T; H^{-\epsilon}(\Omega))}
+ \sum_{k=1}^m\norm{u}_{W^{m-k,r}(0,T; H^{2k}(\Omega))}\\
&\leq C_\epsilon \sum_{k=1}^m\left(\norm{f}_{W^{m-k,r}(0,T; 
H^{2k-\frac{1}{2}-\chi}(\pd\Omega))}
+\norm{F}_{ W^{m-k,r}(0,T; H^{2(k-1)}(\Omega))}\right)\\
&\ \ \ 
+ C_\epsilon\left(\norm{f}_{W^{m,r}(0,T; H^{-\frac{1}{2}-\chi}(\pd\Omega))}
+ \norm{\rho^{-1}F}_{ W^{m,r}(0,T; D(A_{\chi}^{-1}))}\right),
\end{aligned}
\ee
with $C_\epsilon$ depending on $\epsilon$, $r$, $\rho$, $\alpha$,  
$\mathcal A$, $\Omega$ and $T$.
\end{thm}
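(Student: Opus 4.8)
The plan is to build the strong solution from the weak solution furnished by Theorem \ref{t1}, and to upgrade its regularity by a bootstrap argument that exploits the compatibility condition \eqref{com1} together with the vanishing conditions \eqref{tt3a}. First I would apply Theorem \ref{t1} with $\theta=\frac{1}{2}$ (so $\kappa=0$) to obtain a unique weak solution $u$, characterized through its Laplace transform $\mathcal L u(p,\cdot)$ solving \eqref{eq3}. The central idea is to differentiate the problem in time: setting $w=\partial_t u$, one checks formally that $w$ solves an IBVP of the same type as \eqref{eq1} with source $\partial_t F$, boundary data $\partial_t f$, and \emph{zero} initial value, the latter being exactly what the compatibility condition \eqref{com1} guarantees. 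Indeed, rewriting \eqref{id} as $\partial_t^\alpha u = \rho^{-1}(F-\mathcal A u)$ and using \eqref{com1} at $t=0$ forces the relevant initial trace of $\partial_t^\alpha u$ to vanish, which is what permits $\partial_t u$ to again be a weak solution in the sense of Definition \ref{d1}. I would make this rigorous at the level of Laplace transforms, where differentiation in $t$ corresponds to multiplication by $p$ modulo boundary terms at $t=0$ that are controlled precisely by \eqref{com1} and \eqref{tt3a}.

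The next step is the spatial regularity gain. Applying Theorem \ref{t1} to $w=\partial_t u$ (now with zero initial data) and invoking Theorem \ref{c1} when $r=2$, one obtains $\partial_t u$ in a space like $L^r(0,T;D(A_\chi^{s}))$ for a higher exponent $s$ than $u$ itself enjoys; translating through the embedding $D(A_\chi^\ell)\hookrightarrow H^{2\ell}(\Omega)$ (valid under $(H_k)$) converts this into $H^{2k}(\Omega)$-regularity. The elliptic equation $\mathcal A u = F-\rho\partial_t^\alpha u$ then serves as the mechanism to trade time regularity for space regularity: once $\partial_t^\alpha u$ and $F$ lie in $W^{m-k,r}(0,T;H^{2(k-1)}(\Omega))$, elliptic regularity for $\mathcal A$ with the inhomogeneous boundary condition $\tau_\chi u = f$ yields $u\in W^{m-k,r}(0,T;H^{2k}(\Omega))$. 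I would iterate this scheme: at each stage differentiate once more in $t$ (the hypotheses \eqref{tt3a} ensure the higher-order initial traces vanish, so each differentiated problem again has zero initial data and stays within the weak-solution framework), apply the estimate, and feed the result back into the elliptic equation. Running the induction from $k=1$ up to $k=m$ produces membership in $\bigcap_{k=1}^m W^{m-k,r}(0,T;H^{2k}(\Omega))$, and the bottom rung $W^{m,r}(0,T;H^{-\epsilon}(\Omega))$ comes from the $m$-fold differentiated weak estimate of Theorem \ref{t1} at regularity level $\frac{1}{4}-\frac{\theta}{2}=0^-$.

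To assemble the quantitative bound \eqref{t3a}, I would record at each induction step the estimate \eqref{t1a} (or \eqref{c1a}) applied to $\partial_t^{m-k}u$, whose data are $\partial_t^{m-k}f$, $\partial_t^{m-k}F$ and the appropriate—vanishing—initial values, then sum the contributions. The two groups of terms on the right-hand side of \eqref{t3a} correspond respectively to the elliptic-regularity estimates at each level $k=1,\dots,m$ and to the top-order weak estimate in the lowest space $D(A_\chi^{-1})$; the constants accumulate multiplicatively but remain uniform in the data. Verifying the identity \eqref{id} in $L^1(0,T;L^2(\Omega))$ and the boundary and initial conditions \eqref{id1} is then a matter of passing from the Laplace-transform characterization back to the time domain via injectivity of the Laplace transform, which is legitimate once both sides are known to lie in the requisite function spaces established by the bootstrap.

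The main obstacle I anticipate is justifying that $\partial_t u$ genuinely solves an IBVP of the form \eqref{eq1} with zero initial datum in the precise sense of Definition \ref{d1}, rather than only formally. The delicate point is the interplay between the Caputo derivative \eqref{cap}, which is nonlocal in time, and differentiation in $t$: one must show that the boundary terms generated at $t=0$ when differentiating $\partial_t^\alpha u$ are annihilated exactly by the compatibility condition \eqref{com1}, and that the commutation $\partial_t \partial_t^\alpha = \partial_t^\alpha \partial_t$ holds in the relevant weak spaces. This is cleanest to carry out on the Laplace-transform side, where the subtlety manifests as the cancellation of the $p^{\alpha-1}u_0$ term, but care is needed to ensure the resulting function is a legitimate weak solution satisfying the growth condition (i) and the transform condition (ii) of Definition \ref{d1} for the differentiated problem.
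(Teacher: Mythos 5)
Your overall skeleton --- differentiate in time, use \eqref{com1} and \eqref{tt3a} to show the differentiated problem has zero initial data, then trade time regularity for space regularity through the elliptic equation, and iterate --- is exactly the paper's strategy \emph{for the inductive step} $m\geq 2$. But your mechanism for the base case fails, and this is a genuine gap. You propose to obtain $u\in L^r(0,T;H^2(\Omega))$ from elliptic regularity applied to $\cA u = F-\rho\partial_t^\alpha u$, $\tau_\chi u=f$, which requires $\partial_t^\alpha u\in L^r(0,T;L^2(\Omega))$. However, the weak theory you are invoking cannot supply this: Theorem \ref{t1} with $\theta=\frac12$ (so $\kappa=0$) gives the solution of the differentiated problem only in $\bigcap_{\epsilon>0}L^r(0,T;D(A_{\chi}^{-\epsilon}))$, with constants $C_\epsilon$ that blow up as $\epsilon\to0$ (they contain the factor $(\epsilon\alpha)^{-1}$ from $\int_0^t(t-s)^{\epsilon\alpha-1}ds$), and Theorem \ref{c1} is not available in general here since it assumes $\rho=1$ and vanishing initial data, and even then it only yields $\partial_t^\alpha u\in L^2(0,T;D(A_{\chi}^{-1}))$. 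Thus $\partial_t u$, hence $\partial_t^\alpha u$, lands only in spaces of (slightly) negative spatial order, and elliptic regularity returns at best $u\in L^r(0,T;H^{2-\epsilon}(\Omega))$ with non-uniform constants; your bootstrap is circular at its first rung and cannot reach $H^2(\Omega)$.

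The paper closes this gap with a different device at level $m=1$: it integrates by parts in the Mittag--Leffler Duhamel representation, using $s^{\alpha-1}E_{\alpha,\alpha}(-\lambda_{\chi,n}s^\alpha)=-\partial_s\bigl(E_{\alpha,1}(-\lambda_{\chi,n}s^\alpha)/\lambda_{\chi,n}\bigr)$, and then invokes \eqref{com1} to cancel the terms produced at $s=t$. This rewrites $u_n(t)$ as convolution terms carrying an explicit factor $\lambda_{\chi,n}^{-1}$ (estimated via the kernel bound $\lambda_{\chi,n}\abs{E_{\alpha,1}(-\lambda_{\chi,n}s^\alpha)}\leq Cs^{-\alpha}$, whose singularity is integrable since $\alpha<1$, so no $\epsilon$-loss occurs) plus a non-convolution term $w_{3,n}(t)$ that is identified with the Fourier coefficient of the \emph{instantaneous} elliptic lift $y(t,\cdot)$ solving $\cA y=F(t,\cdot)$, $\tau_\chi y=f(t,\cdot)$; elliptic regularity for that stationary problem gives $y\in L^r(0,T;H^2(\Omega))$ directly from $f(t,\cdot)\in H^{\frac32-\chi}(\pd\Omega)$, $F(t,\cdot)\in L^2(\Omega)$. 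Only after this base-case $H^2$ regularity is secured does the paper run the elliptic bootstrap you describe: for $m\geq2$, $u\in W^{1,r}(0,T;H^2(\Omega))$ gives $\partial_t^\alpha u\in L^r(0,T;H^2(\Omega))$ by Young's inequality, and then $(H_2)$-elliptic regularity yields $H^4(\Omega)$, and so on. To repair your proof you would need to replace the first application of the elliptic bootstrap by an argument of this quasi-static-decomposition type (or some other device that avoids the intrinsic $\epsilon$-loss of the weak estimates); the remainder of your plan, including the Laplace-transform justification that $\partial_t u$ is the weak solution of the differentiated problem and the final verification of Definition \ref{d3}, is consistent with the paper.
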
                                                       

\begin{rmk}\label{r2} 
We recall that the compatibility condition \eqref{com1} and an a priori 
estimate of $u_0$ yield 
$$
\norm{u_0}_{H^k(\Omega)}\leq C\left(\norm{f(0,\cdot)}_{H^{k-\frac{1}{2}-\chi}(\pd\Omega)}+\norm{\rho^{-1}F(0,\cdot)}_{D(A_{\chi}^{\frac{k-2}{2}})}\right),\quad k=0,2.
$$
Thanks to this estimate, in terms of only $f$ and $F$, we can estimate  
the right-hand sides of \eqref{t3a}, and \eqref{t4b}, \eqref{t5b} which are 
stated below.
\end{rmk}

For $\alpha\in (0,1)$, the compatibility condition \eqref{com1} corresponds 
to an optimal condition guaranteeing the existence of smooth solutions 
as described in Theorem \ref{t3}. Similarly the additional condition \eqref{tt3a} is an optimal condition 
guaranteeing the existence of solutions with higher regularity. Indeed we can prove
\begin{prop}\label{p1} 
For $f\in\mathcal C^\infty(\overline{\Sigma})$, 
$F\in\mathcal C^\infty(\overline{Q})$ and 
$u_0\in\mathcal C^\infty(\overline{\Omega})$, if 
the compatibility condition \eqref{com1} is not fulfilled, then 
the solution $u$ of \eqref{eq1} does not belong to 
$W^{1,(1-\alpha)^{-1}}(0,T;D(A_{\chi}^{-\frac{1}{2}}))$. 
Moreover, if there exists $m\geq2$ such that \eqref{tt3a} fails,  then 
$u\notin W^{m,(1-\alpha)^{-1}}(0,T;D(A_{\chi}^{-\frac{1}{2}}))$.
\end{prop}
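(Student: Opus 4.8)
The plan is to argue by contradiction, reducing the whole statement to a scalar analysis carried out mode by mode. Since $f$ and $F$ are smooth, for each $t$ I would first let $w(t,\cdot)$ be the solution of the elliptic problem $\cA w(t,\cdot)=F(t,\cdot)$ in $\Omega$, $\tau_{\chi} w(t,\cdot)=f(t,\cdot)$ on $\partial\Omega$, which is uniquely solvable because $q\geq q_0>0$ and which is smooth in both $t$ and $x$. Setting $z:=u-w$, linearity and the uniqueness in Theorem \ref{t1} show that $z$ is the weak solution of \eqref{eq1} with homogeneous boundary data, source $-\rho\partial_t^\alpha w$, and initial value $z(0,\cdot)=u_0-w(0,\cdot)$. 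By elliptic uniqueness the compatibility condition \eqref{com1} is equivalent to $z(0,\cdot)=0$, and \eqref{tt3a} is equivalent to $\partial_t^k w(0,\cdot)=0$ for $k=1,\dots,m-1$. Since $w$ is $C^\infty$ in $t$, it lies in $W^{\ell,(1-\alpha)^{-1}}(0,T;D(A_{\chi}^{-\frac12}))$ for every $\ell$, so it is enough to disprove the corresponding regularity for $z$.

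Next I would set up a single-mode criterion. Writing $z_n(t):=\langle z(t,\cdot),\phi_{\chi,n}\rangle$ and $g_n(t):=-\langle \partial_t^\alpha w(t,\cdot),\phi_{\chi,n}\rangle=-\partial_t^\alpha w_n(t)$, the homogeneous-boundary Fourier representation of the weak solution from the proof of Theorem \ref{t1} gives
$$
z_n(t)=E_{\alpha,1}(-\lambda_{\chi,n}t^\alpha)\,z_n(0)+\int_0^t (t-s)^{\alpha-1}E_{\alpha,\alpha}(-\lambda_{\chi,n}(t-s)^\alpha)\,g_n(s)\,ds.
$$
Because $\phi_{\chi,n}\in D(A_{\chi}^{\frac12})$ with $\|\phi_{\chi,n}\|_{D(A_{\chi}^{1/2})}=\lambda_{\chi,n}^{1/2}$, any $v\in D(A_{\chi}^{-\frac12})$ satisfies $|\langle v,\phi_{\chi,n}\rangle|\leq \lambda_{\chi,n}^{1/2}\|v\|_{D(A_{\chi}^{-1/2})}$. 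Hence, if $u$ belonged to $W^{\ell,(1-\alpha)^{-1}}(0,T;D(A_{\chi}^{-\frac12}))$, then each $u_n$, and therefore each $z_n$, would satisfy $\partial_t^\ell z_n\in L^{(1-\alpha)^{-1}}(0,T)$. It thus suffices to exhibit one index $n$ for which this fails. I would use repeatedly that $t^{\gamma}\notin L^{(1-\alpha)^{-1}}(0,T)$ precisely when $\gamma\leq\alpha-1$, since then $(t^{\gamma})^{1/(1-\alpha)}\leq t^{-1}$ near $0$.

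For the first assertion ($\ell=1$), assuming \eqref{com1} fails, I would pick $n$ with $z_n(0)\neq0$, which exists since $z(0,\cdot)\neq0$ in $L^2(\Omega)$. Differentiating the representation, the initial-value term contributes $-\lambda_{\chi,n}t^{\alpha-1}E_{\alpha,\alpha}(-\lambda_{\chi,n}t^\alpha)z_n(0)$, whose leading behaviour is $-\lambda_{\chi,n}z_n(0)\,t^{\alpha-1}/\Gamma(\alpha)$; as $w_n$ is smooth one has $g_n(0)=0$, so the Duhamel term is $C^1$ near $0$ with bounded derivative. Therefore $\partial_t z_n(t)=c\,t^{\alpha-1}(1+o(1))$ with $c\neq0$, so $\partial_t z_n\notin L^{(1-\alpha)^{-1}}(0,T)$, contradicting $u\in W^{1,(1-\alpha)^{-1}}(0,T;D(A_{\chi}^{-\frac12}))$.

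For the second assertion I may assume \eqref{com1} holds, since otherwise the first part already rules out $W^{1,(1-\alpha)^{-1}}\supset W^{m,(1-\alpha)^{-1}}$; thus $z_n(0)=0$ and $z_n$ reduces to its Duhamel term. Letting $k_0\in\{1,\dots,m-1\}$ be the smallest index for which \eqref{tt3a} fails, one has $\partial_t^k w(0,\cdot)=0$ for $k<k_0$ and $\partial_t^{k_0}w(0,\cdot)\neq0$, so I may fix $n$ with $\partial_t^{k_0}w_n(0)\neq0$. Expanding the Caputo derivative of the smooth function $w_n$ gives $g_n(t)=a\,t^{k_0-\alpha}+O(t^{k_0+1-\alpha})$ with $a\neq0$, and solving $\partial_t^\alpha y+\lambda_{\chi,n}y=t^{\gamma}$, $y(0)=0$, yields $\Gamma(\gamma+1)\sum_{j\geq0}(-\lambda_{\chi,n})^j t^{\alpha(j+1)+\gamma}/\Gamma(\alpha(j+1)+\gamma+1)$; with $\gamma=k_0-\alpha$ this produces the powers $t^{\alpha j+k_0}$. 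The term $j=0$ is $t^{k_0}$ with $k_0\leq m-1$, hence annihilated by $\partial_t^m$, whereas the term $j=1$ is a nonzero multiple of $a\lambda_{\chi,n}t^{\alpha+k_0}$ and gives $\partial_t^m z_n\sim c\,t^{\alpha+k_0-m}$ with $c\neq0$; all remaining contributions are strictly less singular. As $k_0\leq m-1$ forces $\alpha+k_0-m\leq\alpha-1$, I conclude $\partial_t^m z_n\notin L^{(1-\alpha)^{-1}}(0,T)$, hence $u\notin W^{m,(1-\alpha)^{-1}}(0,T;D(A_{\chi}^{-\frac12}))$. The main obstacle I anticipate is the rigorous isolation of the leading singular power in each mode: one must justify the Mittag-Leffler/power-series expansions, check that the offending coefficient does not vanish (here $\alpha+k_0+1-m\notin\mathbb N$ keeps the relevant $\Gamma$-factors finite and nonzero), and verify that the source and higher-order terms cannot cancel it; the single-mode reduction is what makes this bookkeeping tractable and, crucially, bypasses any question of whether the limiting singular profile itself lies in $D(A_{\chi}^{-\frac12})$.
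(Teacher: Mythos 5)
Your proposal is correct in substance, and its engine is the same as the paper's: pass to a single Fourier mode, identify the coefficient of the singular profile $t^{\alpha-1}E_{\alpha,\alpha}(-\lambda_{\chi,n}t^{\alpha})$ (in your general-$m$ computation, of $t^{\alpha+k_0-m}$) with the compatibility mismatch, check that all other contributions are less singular, and use that $t^{\alpha-1}\notin L^{(1-\alpha)^{-1}}(0,T)$. The packaging differs in two ways. First, you introduce the time-dependent elliptic lifting $w(t,\cdot)$ and work with $z=u-w$, while the paper works directly with the Fourier representation of $u$ and encodes the lifting through \eqref{eleq1}; your $z_n(0)\neq 0$ is, up to the factor $-\lambda_{\chi,n_0}$, exactly the paper's coefficient $b_0$ in \eqref{r1a}, and your statement that failure of \eqref{tt3a} at level $k_0$ forces $\partial_t^{k_0}w(0,\cdot)\neq 0$ is the paper's existence of $n_1$ satisfying \eqref{r2c}, proved there by the same elliptic-uniqueness contradiction you invoke. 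Second, the paper proves the higher-order statement only for $m=2$ (singularity in $\partial_t^2u$) and leaves $m\geq 3$ to iteration, whereas you treat general $m$ in one stroke by isolating the power $t^{\alpha+k_0-m}$ in the Mittag-Leffler series; this is a more complete treatment of that case, at the price of the bookkeeping you yourself flag (termwise differentiation, non-vanishing of $1\slash\Gamma(\alpha+k_0+1-m)$, control of the remainder), all of which does go through since $\alpha\notin\mathbb{Z}$ and the remainder of $g_n$ is a finite sum of higher powers $t^{j-\alpha}$ plus the Caputo derivative of a function vanishing to high order at $t=0$.

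One step needs repair as written: you justify the Duhamel representation of $z_n$ by asserting that ``linearity and the uniqueness in Theorem \ref{t1}'' make $z$ the weak solution of the auxiliary homogeneous-boundary problem. That does not follow immediately, because Definition \ref{d1} is formulated via the Laplace transform of an extension to $(0,+\infty)$ while the data enter only through $\int_0^Te^{-pt}\cdot\,dt$; a generic smooth extension $\widetilde w$ of $w$ is a weak solution of a problem with data supported in $(0,T)$ only if $\rho\partial_t^{\alpha}\widetilde w+\cA\widetilde w=0$ and $\tau_\chi\widetilde w=0$ on $(T,+\infty)$, which a generic extension violates. The gap is harmless because the formula you need can be obtained without this detour: writing $S[h](t)=\int_0^t(t-s)^{\alpha-1}E_{\alpha,\alpha}(-\lambda_{\chi,n}(t-s)^{\alpha})h(s)\,ds$, the proof of Theorem \ref{t1} gives $u_n=E_{\alpha,1}(-\lambda_{\chi,n}t^{\alpha})\left\langle u_0,\phi_{\chi,n}\right\rangle+S[\lambda_{\chi,n}w_n]$, since by \eqref{eleq1} the quantity $\lambda_{\chi,n}w_n(t)$ is exactly the combination of boundary and source data appearing there; then the identity $w_n=E_{\alpha,1}(-\lambda_{\chi,n}t^{\alpha})w_n(0)+S[\partial_t^{\alpha}w_n+\lambda_{\chi,n}w_n]$, which is uniqueness of the solution of the scalar fractional ODE (see \cite{SY}, or verify by Laplace transform for smooth $w_n$), yields precisely your representation of $z_n=u_n-w_n$. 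With this patch your argument is complete.
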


For $\alpha\in(1,2)$, we consider two different situations. 
In the first case, we state a result with a weaker regularity assumption 
and the compatibility condition \eqref{com1}. 
In the second result, we consider smoother data under more compatibility 
conditions.  

We start with our first result with only the compatibility condition 
\eqref{com1}.

\begin{thm}
\label{t4} 
Let $\alpha\in(1,2)$, $\chi=0,1$ and let $\delta\in(0,1/4)$, $r\in[1,+\infty)$ satisfy 
\bel{t4a} 
 \quad r< \frac{1}{1-\alpha \delta}.
\ee
Assume $u_0\in H^1(\Omega)$, $u_1\in H^{2\delta}(\Omega)$ and
$$
f\in W^{2,r}(0,T; H^{\frac{1}{2}-\chi}(\pd\Omega))\cap 
L^r(0,T; H^{\frac{3}{2}-\chi}(\pd\Omega)),
$$
$$
F\in W^{2,r}(0,T; D(A_{\chi}^{-\frac{1}{2}}))\cap L^r(0,T; L^2(\Omega)).
$$
If $u_0$ satisfies \eqref{com1}, then the unique weak solution $u$ to
\eqref{eq1} is a strong solution lying in
$$  
L^r(0,T; H^2(\Omega))\cap W^{1,r}(0,T; H^1(\Omega))\cap 
W^{2,r}(0,T; L^2(\Omega)).
$$
Moreover, we have
\bel{t4b}\begin{aligned}&\norm{u}_{W^{2,r}(0,T; L^2(\Omega))}
+ \norm{u}_{W^{1,r}(0,T; H^1(\Omega))}+\norm{u}_{L^r(0,T; H^{2}(\Omega))}\\
&\leq C \left(\norm{f}_{W^{2,r}(0,T; H^{\frac{1}{2}-\chi}(\pd\Omega))}
+ \norm{f}_{L^r(0,T; H^{\frac{3}{2}-\chi}(\pd\Omega))}
+ \norm{u_1}_{H^{2\delta}(\Omega)}\right)    
                                     \\
&\ \ \ + C \left(\norm{F}_{W^{2,r}(0,T; D(A_{\chi}^{-\frac{1}{2}}))}
+ \norm{f}_{L^r(0,T; L^2(\Omega))}\right),
\end{aligned}\ee
where the constant $C>0$ depends on  $r$, $\rho$, $\alpha$,  $\mathcal A$, $\Omega$ and $T$.
\end{thm}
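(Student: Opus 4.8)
The plan is to take the weak solution $u$ furnished by Theorem \ref{t1}, homogenize the data at $t=0$ using the compatibility condition \eqref{com1}, represent the resulting function through Mittag-Leffler functions read off from the Laplace-transform characterization \eqref{eq3}, and then estimate each contribution in the target spaces. Since $u_0$ is independent of $t$ its Caputo derivative vanishes, and \eqref{com1} gives $\cA u_0=F(0,\cdot)$ in $\Omega$ together with $\tau_\chi u_0=f(0,\cdot)$ on $\partial\Omega$. Hence $w:=u-u_0$ solves \eqref{eq1} with source $\tilde F:=F-F(0,\cdot)$, boundary value $\tilde f:=f-f(0,\cdot)$ and initial data $w(0,\cdot)=0$, $\pd_t w(0,\cdot)=u_1$. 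The crucial gain is that now $\tilde F(0,\cdot)=0$ and $\tilde f(0,\cdot)=0$, which will tame the behaviour of the solution operators as $t\to0^+$.

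Applying Definition \ref{d1} and the transposition identity \eqref{trans} with $\mu=p^\alpha$, the Fourier coefficients of $\cL w(p,\cdot)$ in the basis $(\phi_{\chi,n})$ are proportional to $(\lambda_{\chi,n}+p^\alpha)^{-1}$. Using the Laplace identities $\cL[t^{\alpha-1}E_{\alpha,\alpha}(-\lambda t^\alpha)](p)=(p^\alpha+\lambda)^{-1}$ and $\cL[t\,E_{\alpha,2}(-\lambda t^\alpha)](p)=p^{\alpha-2}(p^\alpha+\lambda)^{-1}$, I invert term by term to get $w=w^{I}+w^{F}+w^{b}$, where, writing $\lambda_n:=\lambda_{\chi,n}$,
\begin{align*}
\langle w^{I}(t),\phi_{\chi,n}\rangle &= t\,E_{\alpha,2}(-\lambda_n t^\alpha)\,\langle u_1,\phi_{\chi,n}\rangle,\\
\langle w^{F}(t),\phi_{\chi,n}\rangle &= \int_0^t (t-s)^{\alpha-1}E_{\alpha,\alpha}(-\lambda_n(t-s)^\alpha)\,\langle\rho^{-1}\tilde F(s),\phi_{\chi,n}\rangle\,ds,\\
\langle w^{b}(t),\phi_{\chi,n}\rangle &= -(-1)^\chi\int_0^t (t-s)^{\alpha-1}E_{\alpha,\alpha}(-\lambda_n(t-s)^\alpha)\,\langle\tilde f(s),\tau_\chi^*\phi_{\chi,n}\rangle_{\partial\Omega}\,ds.
\end{align*}

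The condition \eqref{t4a} is dictated entirely by $w^{I}$. From $\pd_t^2[t\,E_{\alpha,2}(-\lambda t^\alpha)]=-\lambda t^{\alpha-1}E_{\alpha,\alpha}(-\lambda t^\alpha)$, the decay bound $|E_{\alpha,\alpha}(-\lambda t^\alpha)|\le C(1+\lambda t^\alpha)^{-1}$ and the elementary inequality $\frac{\lambda t^{\alpha-1}}{1+\lambda t^\alpha}\le\lambda^\delta t^{\alpha\delta-1}$ (valid for $\delta\in(0,1)$, since $1+\lambda t^\alpha\ge(\lambda t^\alpha)^{1-\delta}$), I obtain
$$
\|\pd_t^2 w^{I}(t)\|_{L^2(\Omega)}^2\le C\,t^{2(\alpha\delta-1)}\sum_{n\ge1}\lambda_n^{2\delta}|\langle u_1,\phi_{\chi,n}\rangle|^2\le C\,t^{2(\alpha\delta-1)}\|u_1\|_{H^{2\delta}(\Omega)}^2,
$$
where I used $D(A_{\chi}^{\delta})=H^{2\delta}(\Omega)$ for $2\delta<1/2$ (this is where $\delta<1/4$ enters). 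Thus $\pd_t^2 w^{I}\in L^r(0,T;L^2(\Omega))$ exactly when $\int_0^T t^{r(\alpha\delta-1)}\,dt<\infty$, i.e. when $r<(1-\alpha\delta)^{-1}$, which is \eqref{t4a}; the lower-order derivatives of $w^{I}$ are handled the same way and are strictly less restrictive. For $w^{F}$ and $w^{b}$ the spatial $H^2(\Omega)$ regularity comes from $F\in L^r(0,T;L^2(\Omega))$ and $f\in L^r(0,T;H^{\frac{3}{2}-\chi}(\partial\Omega))$ through $D(A_{\chi})\hookrightarrow H^2(\Omega)$ and the bound $\lambda_n(\lambda_n+p^\alpha)^{-1}\le1$, while the two time derivatives in $L^2(\Omega)$ come from $F\in W^{2,r}(0,T;D(A_{\chi}^{-\frac12}))$ and $f\in W^{2,r}(0,T;H^{\frac12-\chi}(\partial\Omega))$.

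The main obstacle is the boundary term $w^{b}$ and the residual terms generated at $t=0$. When moving two derivatives onto the convolutions, the vanishing $\tilde F(0,\cdot)=0$, $\tilde f(0,\cdot)=0$ removes the worst boundary contribution, leaving one residual proportional to $g_n(t):=t^{\alpha-1}E_{\alpha,\alpha}(-\lambda_n t^\alpha)$ carrying $\pd_t F(0,\cdot)$ or $\pd_t f(0,\cdot)$, plus a smooth convolution against $\pd_t^2\tilde F$, resp. $\pd_t^2\tilde f$. The source residual is immediate from $|g_n(t)|\le C\lambda_n^{-1/2}t^{\alpha/2-1}$ together with $\pd_t F(0,\cdot)\in D(A_{\chi}^{-\frac12})$. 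The genuinely delicate point is that the boundary pairings $\langle\tilde f(s),\tau_\chi^*\phi_{\chi,n}\rangle_{\partial\Omega}$ cannot be summed termwise; instead one must invoke the trace estimates for $\tau_\chi^*\phi_{\chi,n}$ and the resolvent bounds already underlying the proof of Theorem \ref{t1} to show that $h\mapsto\sum_n g_n(t)\langle h,\tau_\chi^*\phi_{\chi,n}\rangle_{\partial\Omega}\phi_{\chi,n}$ maps $H^{\frac12-\chi}(\partial\Omega)$ into $L^2(\Omega)$ with norm $\lesssim t^{\alpha/2-1}$, and $t^{\alpha/2-1}\in L^r(0,T)$ for every $r<(1-\alpha/2)^{-1}$, an interval strictly containing the range \eqref{t4a}. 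Collecting the three pieces gives $w\in L^r(0,T;H^2(\Omega))\cap W^{2,r}(0,T;L^2(\Omega))$, whence $W^{1,r}(0,T;H^1(\Omega))$ follows by mixed-derivative interpolation in time and space. Since $u_0\in H^1(\Omega)$ is time-independent, $u=u_0+w$ has the asserted regularity, \eqref{id} and \eqref{id1} are verified directly from the representation, and the a priori bound of Remark \ref{r2} recasts the resulting inequality into the stated form \eqref{t4b}.
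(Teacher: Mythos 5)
Your reduction $w:=u-u_0$ and your treatment of the initial-velocity term are sound, and in fact mirror the paper's mechanism: the identity $\partial_t^2\bigl[tE_{\alpha,2}(-\lambda t^\alpha)\bigr]=-\lambda t^{\alpha-1}E_{\alpha,\alpha}(-\lambda t^\alpha)$, the bound $\lambda t^{\alpha-1}(1+\lambda t^\alpha)^{-1}\leq \lambda^\delta t^{\alpha\delta-1}$ and $D(A_{\chi}^{\delta})=H^{2\delta}(\Omega)$ are exactly how the paper derives \eqref{t4a}, and the cancellation you obtain from $\tilde f(0,\cdot)=\tilde F(0,\cdot)=0$ is the same cancellation the paper performs through the Fourier-coefficient form of \eqref{com1}. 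The genuine gap is in the claim $u\in L^r(0,T;H^2(\Omega))$. The convolutions $w^F$ and $w^b$ act on $\tilde F=F-F(0,\cdot)$ and $\tilde f=f-f(0,\cdot)$, not on $F$ and $f$; under the hypotheses $F(0,\cdot)$ lies only in $D(A_{\chi}^{-\frac{1}{2}})$ and $f(0,\cdot)$ only in $H^{\frac{1}{2}-\chi}(\partial\Omega)$, so $\tilde F\notin L^r(0,T;L^2(\Omega))$ and $\tilde f\notin L^r(0,T;H^{\frac{3}{2}-\chi}(\partial\Omega))$ in general, and your stated route to an $L^r(0,T;H^2(\Omega))$ bound for $w^F,w^b$ is not available. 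In fact $w\notin L^r(0,T;H^2(\Omega))$ in general: by \eqref{com1} the time-independent parts of $w^F+w^b$ sum to $-(1-E_{\alpha,1}(-\lambda_{\chi,n}t^\alpha))\left\langle u_0,\phi_{\chi,n}\right\rangle$, i.e.\ $w$ contains $-u_0$ plus the free evolution of $u_0$, and neither is $H^2$ since $u_0$ is only $H^1(\Omega)$. Dually, even if $w\in L^r(0,T;H^2(\Omega))$ held, the final step "$u=u_0+w$ has the asserted regularity" fails for the same reason: adding $u_0\in H^1(\Omega)$ cannot produce $H^2$ spatial regularity. The two defects would have to cancel exactly, and your argument never exhibits that cancellation.

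The paper avoids this trap by a different order of operations: it first proves $u\in W^{2,r}(0,T;L^2(\Omega))$ (this is where \eqref{com1}, \eqref{t4a} and $u_1\in H^{2\delta}(\Omega)$ enter), then uses that for $\alpha\in(1,2)$ the Caputo derivative is a fractional integral of $\partial_t^2u$ with integrable kernel $t^{1-\alpha}$, so Young's inequality gives $\partial_t^\alpha u\in L^r(0,T;L^2(\Omega))$; only then does it invoke elliptic regularity for $u(t,\cdot)$ itself, which solves $\rho^{-1}\cA u=-\partial_t^\alpha u+F$, $\tau_{\chi}u=f(t,\cdot)$ with the \emph{unshifted} data $f(t,\cdot)\in H^{\frac{3}{2}-\chi}(\partial\Omega)$, $F(t,\cdot)\in L^2(\Omega)$. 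Two further steps you assert are also not proofs for general $r\in[1,+\infty)$: a uniform symbol bound $\lambda_n(\lambda_n+p^\alpha)^{-1}\leq 1$ on the Laplace side yields $L^r$-in-time boundedness only for $r=2$ (Plancherel), which is a maximal-regularity issue the paper's elliptic step deliberately circumvents; and the mixed-derivative interpolation $L^r(H^2)\cap W^{2,r}(L^2)\subset W^{1,r}(H^1)$ is not available for all $r$ (in particular $r=1$) — the paper instead proves $u\in W^{1,r}(0,T;H^1(\Omega))$ directly from the series representation of $\partial_tu_n$. Finally, the verification of \eqref{id}--\eqref{id1} is not "direct from the representation": the paper requires the extension $\widetilde f,\widetilde F$ supported in $[0,T+1)$ and a Laplace-transform/distributional argument to identify $\rho\partial_t^\alpha u+\cA u$ with $F$.
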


Next we discuss a solutions with higher regularity requiring the two compatibility conditions \eqref{com1} and \eqref{com2}: we assume that 
$u_1$ satisfies  
\bel{com2}
\left\{ \begin{array}{rcll} 
 \cA u_1(x)& = & \partial_tF(0,x), & x\in   \Omega,\\
\tau_{\chi} u_1(x) & = & \partial_tf(0,x), & x \in  \pd \Omega. 
\end{array}
\right.
\ee

\begin{thm}
\label{t5} 
Let $\alpha\in(1,2)$, $\chi=0,1$ and $m_1\in\mathbb N$ be fixed. 
Assume also that condition $(H_{m_1+1})$ is fulfilled and  fix $3\leq m
\leq 2m_1+2$.  Let $u_0\in H^2(\Omega)$, $u_1\in H^1(\Omega)$, $r\in[1,+\infty)$and consider
$$
f\in \bigcap_{k=2}^m W^{m-k,r}(0,T; H^{k-\frac{1}{2}-\chi}(\pd\Omega))
\cap W^{m,r}(0,T; H^{\frac{1}{2}-\chi}(\pd\Omega)),
$$
$$
F\in \bigcap_{k=2}^m W^{m-k,r}(0,T; H^{k-2}(\Omega))
\cap W^{m,r}(0,T; D(A_{\chi}^{-\frac{1}{2}})).
$$
For $m\geq4$, we  assume also that
\bel{t5a}
\partial_t^{k}f(0,\cdot)=0,\quad \partial_t^{k}F(0,\cdot)=0,\quad k=3,
\ldots,m-1.
\ee
If $u_0$ and $u_1$ satisfy the compatibility conditions \eqref{com1} 
and \eqref{com2}, then the unique weak solution $u$ of \eqref{eq1} is a 
strong solution lying in
$$
\bigcap_{k=0}^m W^{m-k,r}(0,T; H^{k}(\Omega)).
$$
Moreover,  we have
\bel{t5b}
\begin{aligned}
\sum_{k=0}^m\norm{u}_{W^{m-k,r}(0,T; H^{k}(\Omega))}
\leq& C \left(\sum_{k=2}^m\norm{f}_{W^{m-k,r}(0,T; H^{k-\frac{1}{2}-\chi}
(\pd\Omega))}
+ \norm{F}_{W^{m-k,r}(0,T; H^{k-2}(\Omega))}\right)\\
\ &+C \left(\norm{f}_{W^{m,r}(0,T; H^{\frac{1}{2}-\chi}(\pd\Omega))}
+ \norm{F}_{W^{m,r}(0,T; D(A_{\chi}^{-\frac{1}{2}}))}\right),
\end{aligned}\ee
where the constant $C>0$ depends on  $\rho$, $r$, $\alpha$,  $\mathcal A$, 
$\Omega$ and $T$.
\end{thm}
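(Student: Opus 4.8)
The plan is to establish the higher-regularity strong solution for $\alpha\in(1,2)$ by combining the Laplace-transform representation of the weak solution from Theorem \ref{t1} with a bootstrap argument that leverages the two compatibility conditions \eqref{com1} and \eqref{com2}. First I would reduce the full problem to one with vanishing initial data. Set $w := u - \mathcal{E}$, where $\mathcal{E}$ is an explicit lifting function constructed so that $\tau_\chi \mathcal{E} = f$ on $\Sigma$, $\partial_t^k \mathcal{E}(0,\cdot) = u_k$ for $k=0,1$, and $\rho^{-1}\mathcal{A}\mathcal{E}(0,\cdot) = F(0,\cdot)$ together with $\rho^{-1}\mathcal{A}\,\partial_t\mathcal{E}(0,\cdot)=\partial_t F(0,\cdot)$; the compatibility conditions \eqref{com1} and \eqref{com2} are exactly what make such a lifting consistent at $t=0$. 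Then $w$ solves \eqref{eq1} with homogeneous boundary data $\tilde f = 0$, vanishing initial values, and a modified source $\tilde F = F - (\rho\partial_t^\alpha + \mathcal{A})\mathcal{E}$, whose first two time-traces at $t=0$ vanish by construction. The differentiated compatibility \eqref{t5a} guarantees that $\tilde F$ additionally has vanishing time-derivatives at $t=0$ up to order $m-1$.

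Next I would differentiate the equation $\lceil$ in time. Because the traces of $\tilde F$ vanish at $t=0$ up to order $m-1$, the Caputo derivative commutes cleanly with $\partial_t^j$ for $j \le m-1$: one shows $\partial_t^\alpha \partial_t^j w = \partial_t^j \partial_t^\alpha w$ with no boundary terms generated, so that $\partial_t^j w$ solves the same fractional equation with source $\partial_t^j \tilde F$ and zero initial/boundary data. This is the mechanism that converts temporal regularity of the data into temporal regularity of the solution, and it is precisely why conditions \eqref{t5a} are imposed. Applying the $L^r$-in-time, $H^2$-in-space estimate of the zero-initial-data theory (Theorem \ref{c1} together with the smoothing bounds underlying Theorem \ref{t1}) to each $\partial_t^j w$ yields, for every $j \le m-2$,
\[
\norm{\partial_t^j w}_{L^r(0,T;H^2(\Omega))} + \norm{\partial_t^{j+\alpha} w}_{L^r(0,T;L^2(\Omega))} \le C\,\norm{\partial_t^j \tilde F}_{L^r(0,T;L^2(\Omega))}.
\]
The spatial regularity is then upgraded by reading $\mathcal{A}\,\partial_t^j w = \rho(\partial_t^j \tilde F - \partial_t^{j+\alpha} w)$ as an elliptic equation: under $(H_{m_1+1})$ the elliptic shift theorem (the $D(A_\chi^\ell)\hookrightarrow H^{2\ell}$ embedding quoted in the excerpt) promotes $L^2$ right-hand sides of $\partial_t^{j+2}\tilde F$ to $H^{2(k-1)}$ control, giving the mixed norms $W^{m-k,r}(0,T;H^{2k}(\Omega))$ appearing in the claimed space. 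Interpolating between the endpoints $H^0$ and $H^2$ in space, against the temporal orders, produces the full intermediate scale $\bigcap_{k=0}^m W^{m-k,r}(0,T;H^k(\Omega))$, and collecting the lifting estimates recovers the data norms on the right-hand side of \eqref{t5b}.

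The main obstacle will be the commutation step for $\alpha\in(1,2)$: unlike the subdiffusive range, the Caputo derivative of order $\alpha>1$ involves $\partial_s^2$ inside the memory kernel, so differentiating in time and interchanging with $\partial_t^\alpha$ produces boundary contributions at $s=0$ unless a sufficient number of initial traces of $w$ \emph{and} of $\partial_t w$ vanish. This is why the theorem requires both \eqref{com1} and \eqref{com2} (governing $u_0$ and $u_1$) rather than a single compatibility condition, and why the bootstrap must start from the index $k=2$ in the hypotheses on $f$ and $F$. I would handle this by first verifying the commutation identity at the level of the Laplace transform, where $\partial_t^\alpha$ becomes multiplication by $p^\alpha$ minus the explicit initial-trace polynomial $\sum_{m=0}^{1} p^{\alpha-1-m}\partial_t^m(\cdot)(0)$ displayed in \eqref{eq3}; vanishing of these traces for the reduced problem makes the polynomial disappear, after which inverting the Laplace transform and invoking uniqueness from Theorem \ref{t1} transfers the identity back to the time domain. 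The remaining work — tracking the fractional-order gain $p^{-\alpha}$ against the required $r < (1-\alpha\delta)^{-1}$-type thresholds through each differentiation and each elliptic shift — is the routine bookkeeping that the constant $C$ in \eqref{t5b} absorbs.
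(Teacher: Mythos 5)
Your reduction to homogeneous boundary data via the lifting $\mathcal{E}$ is the step that breaks, and it is precisely the step the paper explicitly declines to take (it remarks that it does not know how to reach the stated regularity through a classical lifting argument combined with homogeneous-boundary results). The modified source is $\tilde F=F-\rho\partial_t^\alpha\mathcal{E}-\mathcal{A}\mathcal{E}$, and for $\alpha\in(1,2)$ the Caputo derivative of the lifting expands near $t=0$ as $\partial_t^\alpha\mathcal{E}(t,\cdot)=\partial_t^2\mathcal{E}(0,\cdot)\,t^{2-\alpha}/\Gamma(3-\alpha)+(\text{smoother terms})$. The trace $\tau_\chi\partial_t^2\mathcal{E}(0,\cdot)=\partial_t^2f(0,\cdot)$ is forced by the boundary condition, and the hypotheses of Theorem \ref{t5} do \emph{not} require $\partial_t^2f(0,\cdot)=0$: condition \eqref{t5a} only kills the derivatives of order $k\geq 3$. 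Hence generically $\partial_t^2\mathcal{E}(0,\cdot)\neq 0$, so $\partial_t\tilde F\sim t^{1-\alpha}$ near $t=0$, which belongs to $L^r(0,T)$ only under the extra restriction $r<(\alpha-1)^{-1}$, and $\partial_t^2\tilde F\sim t^{-\alpha}$ is not even locally integrable. Your assertion that \eqref{t5a} guarantees vanishing time-derivatives of $\tilde F$ at $t=0$ up to order $m-1$ is therefore not merely unproved: those derivatives do not exist, and the bootstrap cannot start for $m\geq 3$ and arbitrary $r\in[1,+\infty)$. Two secondary problems: the zero-data estimate you invoke (gain of $H^2$ in space from an $L^2$ source, in $L^r$-in-time for every $r$) is not what Theorem \ref{c1} provides (that result is $r=2$ only, with a much weaker spatial gain), and the scale $W^{m-k,r}(0,T;H^{2k}(\Omega))$ you aim for is the subdiffusive scaling of Theorem \ref{t3}; for $\alpha\in(1,2)$ the theorem claims, and the equation yields, only one spatial derivative per time derivative, i.e.\ $H^{k}$.

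The paper's proof never removes the boundary data. For $m=3$ it works directly with the Mittag-Leffler eigenfunction representation of $u$ coming from Theorem \ref{t1}: differentiating the coefficients $u_n$ twice in time produces, besides convolution terms, singular terms proportional to $t^{\alpha-2}E_{\alpha,\alpha-1}(-\lambda_{\chi,n}t^\alpha)$ and $t^{\alpha-1}E_{\alpha,\alpha}(-\lambda_{\chi,n}t^\alpha)$, and the compatibility conditions are used exactly to cancel them: \eqref{com1} at the first differentiation, and \eqref{com2} to make the coefficient $a_n$ in \eqref{t4h} vanish. What remains identifies $w=\partial_t^2u$ as the weak solution of the IBVP with boundary datum $\partial_t^2f$, source $\partial_t^2F$ and \emph{zero} initial values, to which Theorem \ref{t4} — which accepts non-homogeneous boundary data — is applied, giving $u\in W^{2,r}(0,T;H^1(\Omega))$; the top spatial regularity $u\in L^r(0,T;H^3(\Omega))$ then follows by reading \eqref{eq1} as an elliptic problem for a.e.\ fixed $t$ with right-hand side $-\rho\partial_t^\alpha u+F\in L^r(0,T;H^1(\Omega))$, and $m\geq 4$ is an iteration. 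In short, the viable repair of your architecture is to differentiate the inhomogeneous problem first and only then invoke solvability theory — i.e.\ apply the reduction to $\partial_t^2u$ rather than to $u$ — which is in substance what the paper does; any attempt to subtract a lifting of the boundary data before differentiating reintroduces the $t^{2-\alpha}$ singularity that Propositions \ref{p1} and \ref{p2} show is unavoidable.
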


For $\alpha\in (1,2)$, the compatibility condition \eqref{com1} corresponds 
to an optimal condition guaranteeing the existence of solutions with 
the smoothness of Theorem \ref{t4}, while both conditions \eqref{com1} 
and \eqref{com2} are required for the existence of smooth solution as stated in Theorem \ref{t5}. In the same way, the additional condition \eqref{t5a} is an optimal 
condition guaranteeing existence of solutions with higher regularity. Indeed we can prove
\begin{prop}\label{p2}
For $f\in\mathcal C^\infty(\overline{\Sigma})$ and $u_0, u_1\in\mathcal 
C^\infty(\overline{\Omega})$, if compatibility condition \eqref{com1} 
is not fulfilled, then the solution $u$ to \eqref{eq1} does not belong 
to $W^{2,(2-\alpha)^{-1}}(0,T;D(A_{\chi}^{-\frac{1}{2}}))$.
Moreover if \eqref{com1} is fulfilled but not \eqref{com2}, then
$u\in W^{3,(2-\alpha)^{-1}}(0,T;D(A_{\chi}^{-\frac{1}{2}}))$ does not hold. Finally,  if there exists $m\geq4$ such that \eqref{t5a} is not fulfilled,  then  $u\notin W^{m,(2-\alpha)^{-1}}(0,T;D(A_{\chi}^{-\frac{1}{2}}))$.
\end{prop}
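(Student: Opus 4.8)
The plan is to reduce the statement to a single eigenmode together with an explicit Mittag--Leffler computation. Fix an eigenfunction $\phi_{\chi,n_0}$. Since $\phi_{\chi,n_0}\in D(A_\chi^{1/2})$, the map $w\mapsto\langle w,\phi_{\chi,n_0}\rangle$ is a bounded linear functional on $D(A_\chi^{-1/2})$; hence if $u$ belonged to $W^{m,(2-\alpha)^{-1}}(0,T;D(A_\chi^{-1/2}))$, then the scalar function $u_{n_0}(t):=\langle u(t,\cdot),\phi_{\chi,n_0}\rangle$ would belong to $W^{m,(2-\alpha)^{-1}}(0,T)$. It therefore suffices to exhibit, under each failure hypothesis, one index $n_0$ for which $u_{n_0}\notin W^{m,(2-\alpha)^{-1}}(0,T)$ with the appropriate $m$.

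First I would derive the scalar equation solved by $u_{n_0}$. Testing \eqref{eq1} against $\phi_{\chi,n_0}$ and using Green's formula together with $A_\chi\phi_{\chi,n_0}=\lambda_{\chi,n_0}\phi_{\chi,n_0}$ and $\tau_\chi\phi_{\chi,n_0}=0$, the boundary contributions collapse to a single trace term and one obtains
$$\partial_t^\alpha u_{n_0}+\lambda_{\chi,n_0}u_{n_0}=g_{n_0}(t),\qquad u_{n_0}(0)=\langle u_0,\phi_{\chi,n_0}\rangle,\quad u_{n_0}'(0)=\langle u_1,\phi_{\chi,n_0}\rangle,$$
where $g_{n_0}$ is smooth in $t$ and is built from $F$ and from the trace of $f$ paired with the conormal derivative of $\phi_{\chi,n_0}$. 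For the merely weak solution of the first assertion (where \eqref{com1} fails) this identity is read off instead from the Laplace-transformed problem \eqref{eq3} by pairing with $\phi_{\chi,n_0}$, which gives $\mathcal L u_{n_0}(p)=(p^\alpha+\lambda_{\chi,n_0})^{-1}(\mathcal L g_{n_0}(p)+p^{\alpha-1}u_{n_0}(0)+p^{\alpha-2}u_{n_0}'(0))$; inverting the transform reproduces the Mittag--Leffler representation of $u_{n_0}$.

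The core is the small-$t$ expansion of this representation. Writing $u_{n_0}$ through $E_\alpha$, $E_{\alpha,2}$ and the kernel $t^{\alpha-1}E_{\alpha,\alpha}$, a direct term-by-term computation shows that the coefficient of each fractional monomial $t^{\alpha p+q}$ (with integers $p\geq1$, $q\geq0$) equals a nonzero constant times a single \emph{defect} $\delta_q$, where $\delta_0=g_{n_0}(0)-\lambda_{\chi,n_0}u_{n_0}(0)$, $\delta_1=g_{n_0}'(0)-\lambda_{\chi,n_0}u_{n_0}'(0)$ and $\delta_q=g_{n_0}^{(q)}(0)$ for $q\geq2$. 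By the same Green's formula, $\delta_0$ (resp. $\delta_1$) is the $n_0$-th generalized Fourier coefficient of the defect of \eqref{com1} (resp. \eqref{com2}), while for $q\geq2$ the number $\delta_q$ is that of the pair $(\partial_t^qF(0),\partial_t^qf(0))$. Hence, whenever the relevant condition fails, the corresponding defect is a nonzero element of $D(A_\chi^{-1/2})$, so $\delta_{q_0}\neq0$ for at least one mode $n_0$, with $q_0=0$ for the first assertion, $q_0=1$ for the second, and $q_0\geq2$ the lowest order at which \eqref{t5a} breaks down for the third.

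Finally I would isolate the singular term. If $\delta_q=0$ for $q<q_0$ and $\delta_{q_0}\neq0$, then all monomials $t^{\alpha p+q}$ with $q<q_0$ vanish, the smallest surviving fractional power is $t^{\alpha+q_0}$ with nonzero coefficient, and every other surviving fractional power exceeds $\alpha+q_0$. Differentiating $q_0+2$ times gives $\partial_t^{q_0+2}t^{\alpha+q_0}=c\,t^{\alpha-2}$ with $c\neq0$, and since $(\alpha-2)(2-\alpha)^{-1}=-1$ we have $t^{\alpha-2}\notin L^{(2-\alpha)^{-1}}(0,T)$, whereas all remaining contributions (the higher fractional powers and the affine part $u_{n_0}(0)+u_{n_0}'(0)t$) lie in $W^{q_0+2,(2-\alpha)^{-1}}(0,T)$. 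Therefore $u_{n_0}\notin W^{q_0+2,(2-\alpha)^{-1}}(0,T)$, whence $u\notin W^{q_0+2,(2-\alpha)^{-1}}(0,T;D(A_\chi^{-1/2}))$ and, a fortiori, $u\notin W^{m,(2-\alpha)^{-1}}(0,T;D(A_\chi^{-1/2}))$ for every $m\geq q_0+2$; specializing to $m=2$, $m=3$ and $m\geq q_0+2$ yields the three assertions. The main obstacles are the careful bookkeeping of the boundary terms in the scalar reduction (in particular making it rigorous for the weak solution through \eqref{eq3}) and verifying that the failure of the abstract conditions \eqref{com1}, \eqref{com2}, \eqref{t5a} does force a nonzero defect on at least one eigenmode; the proportionality of every $t^{\alpha p+q}$ coefficient to the single defect $\delta_q$ is the computational crux that prevents any accidental cancellation from rescuing the critical power.
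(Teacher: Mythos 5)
Your proposal follows essentially the same route as the paper's proof: project the solution onto a single eigenmode $\phi_{\chi,n_0}$ on which the compatibility defect does not vanish, use the Mittag--Leffler representation of that mode, and extract after differentiation the critical power $t^{\alpha-2}$, which is not $(2-\alpha)^{-1}$-integrable because $(\alpha-2)(2-\alpha)^{-1}=-1$. The paper writes out only the first assertion: combining \cite[Lemma 3.2]{SY} with Proposition \ref{p1}, it computes $\left\langle \partial_t^2u(t,\cdot),\phi_{\chi,n_0}\right\rangle_{-\frac{1}{2}}$, isolates the term $d_0t^{\alpha-2}E_{\alpha,\alpha-1}(-\lambda_{\chi,n_0}t^\alpha)$ with $d_0\neq0$, and treats the convolution terms as bounded remainders to get the lower bound $c_3\abs{d_0}t^{\alpha-2}-c_2$; the remaining assertions are left as ``similar''. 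Your term-by-term expansion, with the observation that the coefficient of every fractional monomial $t^{\alpha p+q}$ is a nonzero multiple of the single defect $\delta_q$, is a systematic repackaging of that computation which handles the three assertions uniformly, and your non-cancellation step (failure of the abstract condition forces $\delta_{q_0}\neq0$ on some mode) is exactly the argument the paper gives inside Proposition \ref{p1} via the auxiliary function $G=A_{\chi}^{-1}\rho^{-1}\partial_tF(0,\cdot)$.

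One point does not close in the third assertion, and you should flag it. Let $q_0$ denote the lowest order carrying a nonzero defect on some mode; your argument yields $u\notin W^{q_0+2,(2-\alpha)^{-1}}(0,T;D(A_{\chi}^{-\frac{1}{2}}))$, hence non-membership in $W^{m,(2-\alpha)^{-1}}$ only for $m\geq q_0+2$. But the hypothesis that \eqref{t5a} fails at level $m$ only guarantees $q_0\leq m-1$. In the borderline case $q_0=m-1$ (all lower-order data compatible, failure exactly at order $m-1$), your lowest surviving monomial is $t^{\alpha+m-1}$, whose $m$-th derivative is comparable to $t^{\alpha-1}$ near $t=0$, and this \emph{is} in $L^{(2-\alpha)^{-1}}(0,T)$ since $(\alpha-1)(2-\alpha)^{-1}>-1$; all other terms are even less singular, so the single-mode obstruction disappears and only $u\notin W^{m+1,(2-\alpha)^{-1}}$ follows. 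Thus your proof establishes the third assertion only when the failure occurs at some order $\leq m-2$. This is not a defect peculiar to your write-up --- the paper's own method, applied ``similarly'', hits the same wall, and it reflects an off-by-one sharpness issue in the statement itself: for $\alpha\in(1,2)$ the critical exponent is $\alpha-2$ rather than $\alpha-1$, so the borderline case that works in Proposition \ref{p1} (where $(\alpha+q_0-m)(1-\alpha)^{-1}\leq-1$ exactly when $q_0\leq m-1$) fails here. Your first two assertions ($q_0=0$ giving $m=2$, and $q_0=1$ giving $m=3$) are unaffected and correctly proved.
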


\subsection{Comments about our results}

To the best of our knowledge,  Theorem \ref{t1} is the first result of well-posedness   of \eqref{eq1} with such weak assumptions imposed on 
the boundary value $f$, the initial values 
$u_0, u_{\lceil\alpha\rceil-1}$ and the source term $F$, as long as 
the elliptic part $\mathcal{A}$ is $t$-independent and symmetric. 
Indeed, all other comparable results have been stated with less general 
equations and smoother data (see for instance \cite{Ke1,Ke2,KR}) or 
with zero initial value (see \cite{Y}). 
In addition, we state these well-posedness for solutions 
lying in $L^r$ in time with $r\in[1,+\infty]$, 
while other comparable results are restricted to solutions lying 
in $L^2$ in time. 


Theorems \ref{t3}, \ref{t4}, and \ref{t5} are concerned with 
the well-posedness in the strong sense of \eqref{eq1}. 
Our aim is to obtain an optimal condition guaranteeing the existence of 
smooth solutions to \eqref{eq1} in time and space. 
It is known that there exist even smooth data satisfying a usual 
compatibility conditions, but that the regularity in time of the solution
to \eqref{eq1} can not exceed $W^{1,(\lceil\alpha\rceil-\alpha)^{-1}}$ 
(see Propositions \ref{p1} and \ref{p2} and Section 6 for more details). 
Moreover, most of results concerning the existence of smooth solutions 
to \eqref{eq1} 
have mainly established some H\"older continuity in time of the solutions (e.g. \cite{Gu, Z2}) or the regularity which is weaker than 
$H^\alpha$ in time (e.g. \cite{KuY}). 
In Theorems \ref{t3} and \ref{t5}, we prove the existence of 
solutions lying $W^{m,r}$ in time with arbitrary $m\in\mathbb N$ and
$r\in[1,+\infty]$ by assuming some compatibility conditions 
\eqref{com1} and \eqref{tt3a} (respectively \eqref{com1}, \eqref{com2}, 
and \eqref{t5a}) 
for $\alpha\in(0,1)$ (respectively $\alpha\in(1,2)$). 


We have obtained Theorem \ref{t1} 
by means of the representation of the solutions which explicitly 
involves the boundary value $f$.
In the same way, we derive and prove an optimality of the compatibility 
conditions \eqref{com1}, \eqref{tt3a}, \eqref{com2} and \eqref{t5a} 
by using the Fourier series representation of the solution to \eqref{eq1}. 
The representation of solutions to \eqref{eq1} which we used for 
the proof of Theorem \ref{t1}, is the key ingredient for the well-posedness
of \eqref{eq1}.

Indeed, in order to reach the regularity stated in Theorems \ref{t3}, \ref{t4} 
and \ref{t5}, 
we do not know whether we can use a classical lifting arguments 
with results on the existence of the solutions with 
homogeneous boundary conditions like \cite{KY1,KuY,SY,Z1,Z2}. 
\\ 

\subsection{Outline}

This paper is organized as follows. Section 2 is devoted to the proof
of Theorems \ref{t1} and \ref{c1}. 
In Section 3 we prove Theorem \ref{t3}.
Sections 4 and 5 are devoted respectively to the proof of Theorems \ref{t4}
and \ref{t5}, while Section 6 is devoted to the proofs of Propositions 
\ref{p1} and \ref{p2} emphasizing the optimality of the compatibility conditions \eqref{com1}, \eqref{tt3a}, \eqref{com2} and \eqref{t5a}.

\section{Proof of Theorems \ref{t1} and  \ref{c1}}

We start with a lemma. Then we prove Theorem \ref{t1} for $\alpha\in(0,1)$, and then for the case $\alpha\in(1,2)$.
Finally, we prove Theorem \ref{c1}.

\subsection{Preliminary lemma}

Henceforth $C>0$ denotes generic constants which depend 
on $\mathcal A$, $\rho$, $\theta$ and $r, \alpha, T$ and $\Omega$, and 
$C$ may change from line to line.

We prove

\begin{lem}\label{l1} Let $\chi=0,1$, $\theta\in\left[\frac{1}{2},+\infty\right)$, $\kappa= \frac{2\theta-1}{4}$, $k=1+\lceil\kappa\rceil$ and 
let condition $(H_k)$ be fulfilled. Then, for any $h\in H^{-\theta-\chi}(\partial\Omega)$, we have
\bel{l1a} 
\sum_{n=1}^\infty \abs{\lambda_{\chi,n}^{-1-\kappa}\left\langle h,\tau_{\chi}^*\phi_{\chi,n}\right\rangle_{H^{-\theta-\chi}(\partial\Omega),H^{\theta+\chi}(\partial\Omega)}}^2\leq C^2 \norm{h}_{H^{-\theta-\chi}(\partial\Omega)}^2.
\ee
\end{lem}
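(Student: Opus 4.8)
The plan is to prove \eqref{l1a} by a duality argument that reduces the estimate to a boundedness property of the boundary operator $\tau_\chi^*$ on a domain of fractional powers of $A_\chi$. The crucial arithmetic observation is that
$$2(1+\kappa)=\theta+\tfrac32,$$
so that, under $(H_k)$ with $k=1+\lceil\kappa\rceil\geq 1+\kappa$, the interpolation statement recalled before Definition \ref{d1} gives the embedding $D(A_\chi^{1+\kappa})\hookrightarrow H^{2(1+\kappa)}(\Omega)=H^{\theta+3/2}(\Omega)$. As explained below, this is exactly the Sobolev index matching the trace theorem with the loss induced by $\tau_\chi^*$.

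First I would set $c_n:=\lambda_{\chi,n}^{-1-\kappa}\langle h,\tau_\chi^*\phi_{\chi,n}\rangle_{H^{-\theta-\chi}(\partial\Omega),H^{\theta+\chi}(\partial\Omega)}$ and fix $N\in\mathbb N$. By $\ell^2$-duality there is $(b_n)_{n=1}^N$ with $\sum_{n=1}^N|b_n|^2\leq 1$ and $\big(\sum_{n=1}^N|c_n|^2\big)^{1/2}=\sum_{n=1}^N c_nb_n$. I introduce the finite sum
$$w_N:=\sum_{n=1}^N b_n\lambda_{\chi,n}^{-1-\kappa}\phi_{\chi,n}\in D(A_\chi^{s})\ \text{for all } s\geq0,$$
which satisfies $\|w_N\|_{D(A_\chi^{1+\kappa})}^2=\sum_{n=1}^N|b_n|^2\leq1$ directly from the definition of the norm on $D(A_\chi^{1+\kappa})$. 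By linearity of $\tau_\chi^*$ and of the duality bracket,
$$\Big(\sum_{n=1}^N|c_n|^2\Big)^{1/2}=\sum_{n=1}^N c_nb_n=\langle h,\tau_\chi^* w_N\rangle_{H^{-\theta-\chi}(\partial\Omega),H^{\theta+\chi}(\partial\Omega)},$$
and therefore
$$\Big(\sum_{n=1}^N|c_n|^2\Big)^{1/2}\leq \|h\|_{H^{-\theta-\chi}(\partial\Omega)}\,\|\tau_\chi^* w_N\|_{H^{\theta+\chi}(\partial\Omega)}.$$

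It then remains to establish the trace estimate
$$\|\tau_\chi^* w\|_{H^{\theta+\chi}(\partial\Omega)}\leq C\,\|w\|_{D(A_\chi^{1+\kappa})},\qquad w\in D(A_\chi^{1+\kappa}),$$
which, combined with $\|w_N\|_{D(A_\chi^{1+\kappa})}\leq1$ and letting $N\to+\infty$, yields \eqref{l1a} after squaring. Recalling $\tau_0^*=\tau_1=\partial_{\nu_{\mathcal A}}$ and $\tau_1^*=\tau_0$ (the trace), I would treat the two cases via the above embedding $D(A_\chi^{1+\kappa})\hookrightarrow H^{\theta+3/2}(\Omega)$ and the classical trace theorem on the $\mathcal C^{2k}$ boundary $\partial\Omega$. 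For $\chi=1$, the trace maps $H^{\theta+3/2}(\Omega)$ continuously into $H^{\theta+1}(\partial\Omega)=H^{\theta+\chi}(\partial\Omega)$; for $\chi=0$, the conormal derivative $\partial_{\nu_{\mathcal A}}$ (well defined since $a_{i,j}\in\mathcal C^{2(k-1)+1}(\overline{\Omega})$) maps $H^{\theta+3/2}(\Omega)$ continuously into $H^{\theta}(\partial\Omega)=H^{\theta+\chi}(\partial\Omega)$. In both cases the loss of boundary regularity ($1/2$ for the trace, $3/2$ for the normal derivative) is exactly compensated by the shift $\chi$ in the target index $\theta+\chi$, which is precisely why the single power $1+\kappa$ works uniformly in $\chi$.

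The argument is essentially bookkeeping of Sobolev exponents; the one point requiring care is the justification of the duality manipulation, which is why I work with the finite sums $w_N$ (so that $\tau_\chi^* w_N$ is classically defined and the pairing splits termwise) before passing to the limit. A secondary check is that $(H_k)$ with $k=1+\lceil\kappa\rceil$ indeed grants $D(A_\chi^{1+\kappa})\hookrightarrow H^{2(1+\kappa)}(\Omega)$ at the fractional exponent $s=1+\kappa\leq k$, together with the fact that the regularity of $a_{i,j}$ and $\partial\Omega$ provided by $(H_k)$ is enough for the conormal trace to be bounded into $H^{\theta}(\partial\Omega)$.
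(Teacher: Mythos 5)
Your proof is correct, and it hinges on exactly the same analytic ingredients as the paper's: the identity $2(1+\kappa)=\theta+\tfrac{3}{2}$, the embedding $D(A_{\chi}^{1+\kappa})\hookrightarrow H^{2(1+\kappa)}(\Omega)$ supplied by $(H_k)$ and interpolation, and the boundedness of $\tau_{\chi}^*$ from $H^{\theta+\frac{3}{2}}(\Omega)$ into $H^{\theta+\chi}(\partial\Omega)$, with the loss of $\tfrac12$ (trace) versus $\tfrac32$ (conormal derivative) absorbed by the shift $\chi$ in the target index. The difference is in the packaging. The paper introduces the transposition solution $y\in D(A_{\chi}^{-\kappa})$ of the elliptic problem \eqref{eq2} with $\mu=0$, $\Phi=0$ and boundary datum $h$, observes that the left-hand side of \eqref{l1a} equals $\norm{y}_{D(A_{\chi}^{-\kappa})}^2$, and bounds that norm by testing against $G\in D(A_{\chi}^{\kappa})$, so that the function paired with $h$ is $\tau_{\chi}^*A_{\chi}^{-1}G$. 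You bypass the transposition framework and run the duality directly on the coefficient sequence via finite partial sums. The two computations are in fact identical: your $w_N$ ranges over (a dense piece of) the unit ball of $D(A_{\chi}^{1+\kappa})$, which is precisely the image under $A_{\chi}^{-1}$ of the unit ball of $D(A_{\chi}^{\kappa})$ used in the paper, so $\left\langle h,\tau_{\chi}^*w_N\right\rangle$ is the paper's $\left\langle h,\tau_{\chi}^*A_{\chi}^{-1}G\right\rangle$. What your version buys is self-containedness and rigor at low cost: you never need existence or uniqueness of the transposition solution, and the termwise splitting of the duality bracket is trivially justified on finite sums before letting $N\to\infty$. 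What the paper's version buys is coherence with the surrounding machinery: the coefficient identity for transposition solutions established inside its proof of Lemma \ref{l1} is reused later (for instance to derive \eqref{eleq1}), so routing the lemma through $y$ makes those subsequent steps immediate.
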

\begin{proof}  
Let $y\in D(A_{\chi}^{-\kappa})$ be the solution in the transposition sense 
to \eqref{eq2} with $\mu=0$ and $\Phi=0$. 
Since $A_{\chi}\phi_{\chi,n}=\lambda_{\chi,n}\phi_{\chi,n}$, we have 
$$
\left\langle y,\lambda_{\chi,n}\phi_{\chi,n}\right\rangle_{-\kappa}=-(-1)^\chi\left\langle h,\tau_{\chi}^*\phi_{\chi,n}\right\rangle_{H^{-\theta-\chi}(\partial\Omega), H^{\theta+\chi}(\partial\Omega)}, \quad n\in \mathbb N.
$$
Therefore 
$$
\left\langle y,\phi_{\chi,n}\right\rangle_{-\kappa}=-(-1)^\chi\lambda_{\chi,n}^{-1}\left\langle h,\tau_{\chi}^*\phi_{\chi,n}\right\rangle_{H^{-\theta-\chi}(\partial\Omega),H^{\theta+\chi}(\partial\Omega)}
$$
and 
\bel{l1c}
\sum_{n=1}^\infty \abs{\lambda_{\chi,n}^{-1-\kappa}\left\langle h,\tau_{\chi}^*\phi_{\chi,n}\right\rangle_{H^{-\theta-\chi}(\partial\Omega),H^{\theta+\chi}(\partial\Omega)}}^2=\norm{y}_{D(A_{\chi}^{-\kappa})}^2.\ee
Furthermore, for any $G\in D(A_{\chi}^\kappa)$, we have
$$\begin{aligned}
\abs{\left\langle y,G\right\rangle_{-\kappa}}&=\abs{\left\langle h,\tau_{\chi}^*A_{\chi}^{-1}G\right\rangle_{H^{-\theta-\chi}(\partial\Omega),H^{\theta+\chi}(\partial\Omega)}}\\
\ &\leq \norm{h}_{H^{-\theta-\chi}(\partial\Omega)}\norm{\tau_{\chi}^*A_{\chi}^{-1}G}_{H^{\theta+\chi}(\partial\Omega)}\\
\ &\leq C\norm{h}_{H^{-\theta-\chi}(\partial\Omega)}\norm{A_{\chi}^{-1}G}_{H^{2(1+\kappa)}(\Omega)}\\
\ &\leq C\norm{h}_{H^{-\theta-\chi}(\partial\Omega)}\norm{A_{\chi}^{-1}G}_{D(A_{\chi}^{1+\kappa})}\\
\ &\leq C\norm{h}_{H^{-\theta-\chi}(\partial\Omega)}\norm{G}_{D(A_{\chi}^{\kappa})}.
\end{aligned}$$
Therefore, we obtain
$$
\norm{y}_{D(A_{\chi}^{-\kappa})}\leq C\norm{h}_{H^{-\theta-\chi}
(\partial\Omega)}
$$
and combining this with \eqref{l1c}, we deduce \eqref{l1a}.
\end{proof}

By this lemma, we are now in position to complete the proof of Theorem \ref{t1}.

\subsection{Proof of Theorem \ref{t1} for $\alpha\in(0,1)$}
For all $n\in\mathbb N$, we set
$$
u_{1,n}(t):=-(-1)^\chi\int_0^t(t-s)^{\alpha-1}E_{\alpha,\alpha}(-\lambda_{\chi,n}(t-s)^\alpha)\mathds{1}_{(0,T)}(s)\left\langle f(s,\cdot),\tau_{\chi}^*\phi_{\chi,n}\right\rangle_{H^{-\theta-\chi}(\partial\Omega),H^{\theta+\chi}(\partial\Omega)}ds,
$$
$$
u_{2,n}(t):=E_{\alpha,1}(-\lambda_{\chi,n}t^\alpha)\left\langle u_0,\phi_{\chi,n}\right\rangle_{-1-\kappa},$$
$$ 
u_{3,n}(t):=\int_0^t(t-s)^{\alpha-1}E_{\alpha,\alpha}(-\lambda_{\chi,n}(t-s)^\alpha)\mathds{1}_{(0,T)}(s)\left\langle \rho^{-1}F(s,\cdot),\phi_{\chi,n}\right\rangle_{-\kappa-1}ds,
$$
and
$$
u_n(t):=u_{1,n}(t)+u_{2,n}(t)+u_{3,n}(t),
$$
where $\mathds{1}_{(0,T)}$ denotes the characteristic function of $(0,T)$. 
Here, for $\beta_1,\beta_2>0$, $E_{\beta_1,\beta_2}$ denotes  
the Mittag-Leffler function given by  
$$
E_{\beta_1,\beta_2}(z)=\sum_{k=0}^\infty \frac{z^k}{\Gamma(\beta_1 k+\beta_2)},
\quad   z\in\mathbb C.
$$
\\

We will divide the proof of Theorem \ref{t1} into three steps:
In the first step, we will prove that for any $\epsilon>0$, the sequence 
$$
\sum_{n=1}^N u_n(t)\phi_{\chi,n}, \quad N \in \mathbb N,
$$
converges to $u$ in $L^r(0,T;D(A_{\chi}^{-\epsilon-\kappa}))$ as $N\to \infty$.
In the second step, we prove that the same sequence converges 
in $L^r_{loc}(0,+\infty;D(A_{\chi}^{-\epsilon-\kappa}))$ to a function $v$ satisfying the conditions (i) and (ii) of Definition \ref{d1}. 
In the third step, we complete the proof of the theorem. 

In addition to the generic constant $C>0$, for all $\epsilon>0$
by $C_\epsilon >0$ we denote generic constants depending also on 
$\epsilon$.

\textbf{Step 1.} Fix $\epsilon>0$. Let us show that the sequence
$$
\sum_{n=1}^N u_n(t)\phi_{\chi,n}, \quad N\in \mathbb N
$$
converges in $L^r(0,T;D(A_{\chi}^{-\epsilon-\kappa}))$. 
For this purpose, it suffices to prove that the sequences
$$
\sum_{n=1}^Nu_{i,n}(t)\phi_{\chi,n}, \quad i=1,2,3, \, N \in \mathbb N,
$$
converge in $L^r(0,T;D(A_{\chi}^{-\epsilon-\kappa}))$. 
First we prove this result for the case $i=1$. 
For $m<n$ and almost all $t\in(0,T)$, we have
$$\begin{aligned}
&\norm{\sum_{\ell=m}^{n}u_{1,\ell}(t)\phi_{\chi,\ell}}
_{D(A_{\chi}^{-\epsilon-\kappa})}
                           \\
&= \norm{\sum_{\ell=m}^{n}\left(\int_0^t(t-s)^{\alpha-1}E_{\alpha,\alpha}
(-\lambda_{\chi,\ell}(t-s)^\alpha)\left\langle f(s,\cdot),\tau_{\chi}^*\phi
_{\chi,\ell}\right\rangle_{H^{-\theta-\chi}(\partial\Omega),H^{\theta+\chi}(\partial\Omega)}ds\right)\phi_{\chi,\ell}}_{D(A_{\chi}^{-\epsilon-\kappa})}\\
&\leq\int_0^t\norm{\sum_{\ell=m}^{n}(t-s)^{\alpha-1}E_{\alpha,\alpha}
(-\lambda_{\chi,\ell}(t-s)^\alpha)\left\langle f(s,\cdot),\tau_{\chi}^*
\phi_{\chi,\ell}\right\rangle_{H^{-\theta-\chi}(\partial\Omega),
H^{\theta+\chi}(\partial\Omega)}\phi_{\chi,\ell}}
_{D(A_{\chi}^{-\epsilon-\kappa})}ds.
\end{aligned}$$
In view of formula (1.148) of \cite[Theorem 1.6]{P}, for almost all $s\in(0,t)$,
we find
$$\begin{aligned} 
&\norm{\sum_{\ell=m}^{n}(t-s)^{\alpha-1}E_{\alpha,\alpha}(-\lambda_{\chi,\ell}
(t-s)^\alpha)\left\langle f(s,\cdot),\tau_{\chi}^*\phi_{\chi,\ell}
\right\rangle_{H^{-\theta-\chi}(\partial\Omega),H^{\theta+\chi}(\partial\Omega)}\phi_{\chi,\ell}}_{D(A_{\chi}^{-\epsilon-\kappa})}^2\\
&\leq \sum_{\ell=m}^{n}\abs{\lambda_{\chi,\ell}^{-\epsilon-\kappa}
(t-s)^{\alpha-1}E_{\alpha,\alpha}(-\lambda_{\chi,\ell}(t-s)^\alpha)
\left\langle f(s,\cdot),\tau_{\chi}^*\phi_{\chi,\ell}\right\rangle_{H^{-\theta-\chi}(\partial\Omega),
H^{\theta+\chi}(\partial\Omega)}}^2\\
&\leq C (t-s)^{2(\epsilon\alpha-1)}\sum_{\ell=m}^{n}\abs{\lambda_{\chi,\ell}
^{-1-\kappa}\left\langle f(s,\cdot),\tau_{\chi}^*\phi_{\chi,\ell}
\right\rangle_{H^{-\theta-\chi}(\partial\Omega),H^{\theta+\chi}
(\partial\Omega)}}^2.
\end{aligned}$$
Hence 
$$
\norm{\sum_{\ell=m}^{n}u_{1,\ell}(t)\phi_{\chi,\ell}}_{D(A_{\chi}
^{-\epsilon-\kappa})}
\leq C\int_0^t(t-s)^{\epsilon\alpha-1}\left(\sum_{\ell=m}^{n}
\abs{\lambda_{\chi,\ell}^{-1-\kappa}\left\langle f(s,\cdot),\tau_{\chi}^*
\phi_{\chi,\ell}\right\rangle_{H^{-\theta-\chi}(\partial\Omega),H^{\theta+\chi}(\partial\Omega)}}^2\right)
^{\frac{1}{2}}ds.
$$
Applying the young inequality for convolution, we obtain
$$\begin{aligned} 
&\norm{\sum_{\ell=m}^{n}u_{1,\ell}(t)\phi_{\chi,\ell}}_{L^r(0,T;D(A_{\chi}^{-\epsilon-\kappa}))}\\
&\leq C(\epsilon\alpha)^{-1}T^{\epsilon\alpha}
\norm{\sum_{\ell=1}^{n}\lambda_{\chi,\ell}^{-1-\kappa}
\left\langle f(t,\cdot),\tau_{\chi}^*\phi_{\chi,\ell}\right\rangle
\phi_{\chi,\ell}
- \sum_{\ell=1}^{m}\lambda_{\chi,\ell}^{-1-\kappa}\left\langle f(t,\cdot),\tau_{\chi}^*\phi_{\chi,\ell}\right\rangle\phi_{\chi,\ell}}
_{L^r(0,T;L^2(\Omega))}.
\end{aligned}$$
Using Lemma \ref{l1}, for almost all $t\in(0,T)$ and all 
$N\in\mathbb N$, we obtain 
$$\begin{aligned} 
&\norm{\sum_{\ell=1}^N\lambda_{\chi,\ell}^{-1-\kappa}\left\langle f(t,\cdot),
\tau_{\chi}^*\phi_{\chi,\ell}\right\rangle\phi_{\chi,\ell}}_{L^2(\Omega)}\\
&\leq\left(\sum_{\ell=1}^{+\infty}\abs{\lambda_{\chi,\ell}^{-1-\kappa}
\left\langle f(t,\cdot),\tau_{\chi}^*\phi_{\chi,\ell}
\right\rangle_{H^{-\theta-\chi}(\partial\Omega),H^{\theta+\chi}(\partial\Omega)}}^2\right)^{\frac{1}{2}}\leq C\norm{f(t,\cdot)}_{H^{-\theta-\chi}
(\partial\Omega)}
\end{aligned}$$
and the limit
$$
\lim_{N \to \infty}
\sum_{\ell=1}^N \lambda_{\chi,\ell}^{-1-\kappa}\left\langle f(t,\cdot),
\tau_{\chi}^*\phi_{\chi,\ell}\right\rangle_{H^{-\theta-\chi}(\partial\Omega),H^{\theta+\chi}(\partial\Omega)}\phi_{\chi,\ell},
$$
exists in $L^2(\Omega)$ for almost all $t\in(0,T)$. 
In the  same way, by $f\in L^r(0,T;H^{-\theta-\chi}(\partial\Omega))$, 
applying the Lebesgue dominate convergence theorem for functions taking values in $L^2(\Omega)$, we deduce that the limit
$$
\lim_{N \to \infty} 
\sum_{\ell=1}^N \lambda_{\chi,\ell}^{-1-\kappa}
\left\langle f(t,\cdot),\tau_{\chi}^*\phi_{\chi,\ell}\right\rangle
_{H^{-\theta-\chi}(\partial\Omega),H^{\theta+\chi}(\partial\Omega)}
\phi_{\chi,\ell},
$$
exists in $L^r(0,T;L^2(\Omega))$.
In particular, it is a Cauchy sequence and so we have
$$
\lim_{m,n\to\infty}\norm{\sum_{\ell=1}^{n}\lambda_{\chi,\ell}^{-1-\kappa}
\left\langle f(t,\cdot),\tau_{\chi}^*\phi_{\chi,\ell}\right\rangle
\phi_{\chi,\ell}-\sum_{\ell=1}^{m}\lambda_{\chi,\ell}^{-1-\kappa}
\left\langle f(t,\cdot),\tau_{\chi}^*\phi_{\chi,\ell}\right\rangle
\phi_{\chi,\ell}}_{L^r(0,T;L^2(\Omega))}=0,
$$
which implies that
$$
\lim_{m,n\to\infty}\norm{\sum_{\ell=m}^{n}u_{1,\ell}(t)\phi_{\chi,\ell}}
_{L^r(0,T;D(A_{\chi}^{-\epsilon-\kappa}))}=0.
$$
Thus the sequence
$$
\sum_{n=1}^Nu_{1,n}(t)\phi_{\chi,n},\quad N\in\mathbb N
$$
is a Cauchy sequence in $L^r(0,T;D(A_{\chi}^{-\epsilon-\kappa}))$,
which yields the convergence in $L^r(0,T;D(A_{\chi}^{-\epsilon-\kappa}))$.
Similarly we can show that the sequence
$$
\sum_{n=1}^Nu_{3,n}(t)\phi_{\chi,n}, \quad N\in \mathbb N,
$$
converges in $L^r(0,T;D(A_{\chi}^{-\epsilon-\kappa}))$. 
Moreover formula (1.148) of \cite[Theorem 1.6]{P} yields 
$$
\sum_{n=1}^{\infty}\abs{\lambda_{\chi,n}^{-\epsilon-\kappa}u_{2,n}(t)}^2
\leq C t^{2(\epsilon-\beta)\alpha}\sum_{n=1}^{\infty}\abs{\lambda_{\chi,n}^{-\beta-\kappa}\left\langle u_0,\phi_{\chi,n}\right\rangle}^2\leq Ct^{2(\epsilon-\beta)\alpha}\norm{u_0}_{D(A_{\chi}^{-\kappa-\beta})}^2
$$
and, combining this with the fact that $r\beta \alpha=1$, we can conclude that
the limit 
$$
\lim_{N\to\infty} \sum_{n=1}^Nu_{2,n}(t)\phi_{\chi,n}, 
$$
exists in $L^r(0,T;D(A_{\chi}^{-\epsilon-\kappa}))$. These results prove 
that the sequence
$$
\sum_{n=1}^N u_n(t)\phi_{\chi,n}, \quad N\in \mathbb N,
$$
converges in $L^r(0,T;D(A_{\chi}^{-\epsilon-\kappa}))$.

\textbf{Step 2.} Fix $\epsilon>0$. Using the arguments of 
Step 1, we can define $v\in L^r_{loc}(0,+\infty;D(A_{\chi}^{-\epsilon-\kappa}))$ by
$$
v(t,\cdot):=\sum_{n=1}^\infty u_n(t)\phi_{\chi,n}.
$$
In this step, we will show that $v$ fulfills conditions (i) and (ii) of 
Definition \ref{d1}. Fixing $p>0$ and repeating the arguments of Step 1, we deduce that, for almost all $t>0$, we have 
$$\begin{aligned}
&\norm{e^{-pt}\sum_{n=1}^\infty u_{1,n}(t)\phi_{\chi,n}}_{D(A_{\chi}^{-\epsilon-\kappa})}\\
&\leq C \int_0^te^{-p(t-s)}(t-s)^{\epsilon\alpha-1}e^{-ps}\mathds{1}_{(0,T)}(s)\left(\sum_{n=1}^{+\infty}\abs{\lambda_{\chi,n}^{-1-\kappa}\left\langle f(s,\cdot),\tau_{\chi}^*\phi_{\chi,n}\right\rangle_{H^{-\theta-\chi}(\partial\Omega),H^{\theta+\chi}(\partial\Omega)}}^2\right)^{\frac{1}{2}}ds.
\end{aligned}$$
Combining this with \eqref{l1a} and applying Young's inequality, 
we obtain
$$\begin{aligned}
&\norm{e^{-pt}\sum_{n=1}^\infty u_{1,n}(t)\phi_{\chi,n}}_{L^1(0,+\infty;D(A_{\chi}^{-\epsilon-\kappa}))}\\
&\leq C\norm{\left(e^{-pt}t^{\epsilon\alpha-1}\mathds{1}_{(0,+\infty)}(t)\right)*\left(\mathds{1}_{(0,T)}(t)\norm{f(t,\cdot)}_{H^{-\theta-\chi}(\partial\Omega)}\right)}_{L^1(0,+\infty)}\\
&\leq C\left(\int_0^{+\infty}e^{-pt}t^{\epsilon\alpha-1}dt\right)\norm{f}_{L^r(0,T;H^{-\theta-\chi}(\partial\Omega))}<\infty.
\end{aligned}$$
Therefore, we have
$$
e^{-pt}\sum_{n=1}^\infty u_{1,n}(t)\phi_{\chi,n}\in L^1(0,+\infty;D(A_{\chi}^{-\epsilon-\kappa})),\quad p>0.
$$
In the same way, one can verify that 
$$
e^{-pt}\sum_{n=1}^\infty u_{i,n}(t)\phi_{\chi,n}\in L^1(0,+\infty;D(A_{\chi}^{-\epsilon-\kappa})),\quad p>0,\ i=2,3
$$
and conclude that condition (i) of Definition \ref{d1} is fulfilled. 

Now let us consider condition (ii) of Definition \ref{d1}. Applying the properties of Mittag-Leffter functions (see for instance \cite{P}), we see
that the Laplace transform $\mathcal Lv(p,\cdot)$ of $v$ satisfies 
$$
\mathcal Lv(p,\cdot) = \sum_{n=1}^\infty \mathcal{L}u_n(p)\phi_{\chi,n}
\in D(A_{\chi}^{-\epsilon-\kappa}),
$$
where  
$$\begin{aligned}
&\mathcal Lu_n(p)=\int_0^{+\infty}e^{-pt}u_n(t)dt\\
&=\frac{\left\langle  p^{\alpha-1}u_0+\int_0^T e^{-pt}\rho^{-1}F(t,\cdot)dt,\phi_{\chi,n}\right\rangle_{-\kappa-1}}{p^\alpha+\lambda_{\chi,n}}-\frac{(-1)^\chi\left\langle \int_0^Te^{-pt}f(t,\cdot)dt,\tau_{\chi}^*\phi_{\chi,n}\right\rangle_{H^{-\theta-\chi},H^{\theta+\chi}}}{p^\alpha+\lambda_{\chi,n}}.
\end{aligned}$$
Now let $V(p)$ be the solution in the transposition sense of the boundary 
value problem
$$\left\{ \begin{array}{rcll} 
\rho(x)^{-1} \cA V(p,x) +p^\alpha V(p,x)& = & \int_0^T e^{-pt}\rho^{-1}F(t,x)dt+p^{\alpha-1}u_0(x), 
& x\in   \Omega,\\
\tau_{\chi} V(p,x)(x) & = & \int_0^{T}e^{-pt}f(t,x)dt, & x \in  \pd \Omega. 
\end{array}
\right.$$
By the definition of the transposition, setting 
$G = \phi_{\chi,n}$ in \eqref{trans} and applying 
$(\mathcal A+p^\alpha)\phi_{\chi,n}=(\lambda_{\chi,n}+p^\alpha)\phi_{\chi,n}$, 
we obtain
$$\begin{aligned}
&(\lambda_{\chi,n}+p^\alpha)\left\langle V(p,\cdot),\phi_{\chi,n}
\right\rangle_{-\kappa}\\
&=-(-1)^\chi\left\langle \int_0^Te^{-pt}f(t,\cdot)dt,\tau_{\chi}^*\phi_{\chi,n}
\right\rangle_{H^{-\theta-\chi}(\partial\Omega),
H^{\theta+\chi}(\partial\Omega)}\\
& + \left\langle  p^{\alpha-1}u_0+\int_0^T e^{-pt}\rho^{-1}
F(t,\cdot)dt,\phi_{\chi,n}\right\rangle_{-\kappa-1}\\
&=(\lambda_{\chi,n}+p^\alpha)\mathcal{L}u_n(p).
\end{aligned}$$
Therefore $\mathcal{L}v(p,x) = V(p,x)$ for $p>0$ and $x \in \Omega$, and so
we can verify condition (ii) of Definition \ref{d1}. 

\textbf{Step 3.} In this step we complete the proof of Theorem \ref{t1}. 
The above argument shows that $u$ is a weak solution to \eqref{eq1} in the sense of Definition \ref{d1}. Moreover, the uniqueness of this solution is guaranteed by the uniqueness of the Laplace transform of a function and the uniqueness of the solutions of the boundary value problem \eqref{eq3}. Therefore, the proof of the theorem will be completed if we show that for all $\epsilon>0$, 
estimate \eqref{t1a} holds true. For this purpose, let use first consider
$$
\norm{\sum_{n=1}^{\infty}u_{1,n}\phi_{\chi,n}}
_{L^r(0,T;D(A_{\chi}^{-\epsilon-\kappa}))}.
$$
Repeating the arguments of Step 1 and Lemma \ref{l1}, we deduce that
$$
\norm{\sum_{n=1}^{\infty}u_{1,n}\phi_{\chi,n}}_{L^r(0,T;D(A_{\chi}^{-\epsilon-\kappa}))}\leq C\norm{\int_0^t(t-s)^{\epsilon\alpha-1}\norm{f(s,\cdot)}_{H^{-\theta-\chi}(\partial\Omega)}ds}_{L^r(0,T)}.
$$
Therefore Young's inequality for convolution yields
$$
\norm{\sum_{n=1}^{\infty}u_{1,n}\phi_{\chi,n}}_{L^r(0,T;D(A_{\chi}^{-\epsilon-\kappa}))}\leq C_\epsilon \norm{f}
_{L^r(0,T;H^{-\theta-\chi}(\partial\Omega))}.$$
In the same way, we obtain 
$$
\norm{\sum_{n=1}^{\infty}u_{i,n}\phi_{\chi,n}}_{L^r(0,T;D(A_{\chi}
^{-\epsilon-\kappa}))}\leq C_\epsilon\left(\norm{u_0}_{D(A_{\chi}^{-\kappa-\beta})}+\norm{\rho^{-1}F}_{L^r(0,T;D(A_{\chi}^{-\kappa-1}))}\right),\ i=2,3.
$$
Combining these estimates, we deduce \eqref{t1a}. This completes the proof of 
Theorem \ref{t1} for $\alpha\in(0,1)$.

\subsection{Proof of Theorem \ref{t1} for $\alpha\in(1,2)$}
For this purpose we fix $\epsilon\in(0,\alpha^{-1})$.
For all $n\in\mathbb N$, we set
$$
u_{1,n}(t):=-(-1)^\chi\int_0^t(t-s)^{\alpha-1}E_{\alpha,\alpha}(-\lambda_{\chi,n}(t-s)^\alpha)\mathds{1}_{(0,T)}(s)\left\langle f(s,\cdot),\tau_{\chi}^*\phi_{\chi,n}\right\rangle_{H^{-\theta-\chi}(\partial\Omega),H^{\theta+\chi}(\partial\Omega)}ds,
$$
$$
u_{2,n}(t):=E_{\alpha,1}(-\lambda_{\chi,n}t^\alpha)\left\langle u_0,\phi_{\chi,n}\right\rangle_{-\alpha^{-1}-\kappa},
$$
$$ 
u_{3,n}(t):=tE_{\alpha,2}(-\lambda_{\chi,n}t^\alpha)\left\langle  u_1,\phi_{\chi,n}\right\rangle_{-\alpha^{-1}(1+r^{-1})-\kappa},
$$
$$ 
u_{4,n}(t):=\int_0^t(t-s)^{\alpha-1}E_{\alpha,\alpha}(-\lambda_{\chi,n}(t-s)^\alpha)\mathds{1}_{(0,T)}(s)\left\langle \rho^{-1}F(s,\cdot),\phi_{\chi,n}\right\rangle_{-\kappa-1}ds
$$
and
$$
u_n := u_{1,n}+u_{2,n}+u_{3,n}+u_{4,n}.
$$
We will show that the sequence 
$$
\sum_{n=1}^Nu_n(t)\phi_{\chi,n},\quad N\in \mathbb N,
$$
converges in $L^r(0,T;D(A_{\chi}^{-\epsilon-\kappa}))$ to the unique solution 
$u$ to \eqref{eq1}. For this purpose, we remark that formula (1.148) of \cite[Theorem 1.6]{P} implies that
$$
|u_{2,n}(t)|\leq Ct^{\epsilon\alpha-r^{-1}}\lambda_{\chi,n}^{\epsilon-\alpha^{-1}r^{-1}}\abs{\left\langle  u_0,\phi_{\chi,n}\right\rangle
_{-\alpha^{-1}-\kappa}}
$$
and
$$
|u_{3,n}(t)|\leq Ct^{\epsilon\alpha -r^{-1}}\lambda_{\chi,n}^{\epsilon-\alpha^{-1}(1-r^{-1})}\abs{\left\langle  u_1,\phi_{\chi,n}\right\rangle_{-\alpha^{-1}(1+r^{-1})-\kappa}}.
$$
Therefore, repeating the arguments for the case $\alpha\in(0,1)$,
we can complete the proof of Theorem \ref{t1} for $\alpha\in(1,2)$.

\subsection{Proof of Theorem \ref{c1}}
Since $u_0=u_{\lceil\alpha\rceil-1}=0$ and $\rho=1$, the unique weak solution 
$u$ to \eqref{eq1} is given by
$$
u(t,\cdot)=\sum_{n=1}^\infty u_n(t)\phi_{\chi,n},\quad t\in(0,T),
$$
where
$$
u_n(t):=\int_0^t(t-s)^{\alpha-1}E_{\alpha,\alpha}(-\lambda_{\chi,n}(t-s)^\alpha)\lambda_{\chi,n}^{\kappa+1} G_n(s)ds,
$$
and
$$
G_n(t):=\frac{(-1)^{\chi+1}\left\langle f(t,\cdot),\tau_{\chi}^*\phi_{\chi,n}\right\rangle_{H^{-\theta-\chi}(\partial\Omega),H^{\theta+\chi}(\partial\Omega)}+\left\langle F(t,\cdot),\phi_{\chi,n}\right\rangle_{-\kappa-1}}{\lambda_{\chi,n}^{\kappa+1}},\quad n\in\mathbb N,\ t\in(0,T).
$$
Here we notice that
$$
\left\langle A_{\chi}^{-1-\kappa}u(t,\cdot),\phi_{\chi,n} \right\rangle=\lambda_{\chi,n}^{-\kappa-1}u_n(t)=\int_0^t(t-s)^{\alpha-1}E_{\alpha,\alpha}(-\lambda_{\chi,n}(t-s)^\alpha)G_n(s)ds.
$$
Applying Lemma \ref{l1}, we see 
$$
G(t,\cdot)=\sum_{n=1}^\infty G_n(t)\phi_{\chi,n}\in L^2(\QQQ),
$$
and
\bel{c1b}
\norm{G}_{L^2(\QQQ)}\leq C\left(\norm{f}_{L^2(0,T;H^{-\theta}(\partial\Omega))}+\norm{F}_{L^2(0,T; D(A_{\chi}^{-\kappa -1}))}\right).
\ee
On the other hand, in view of \cite[Lemma A.2]{KY1} (see also \cite[Theorem 2.2]{SY}), we have $A_{\chi}^{-1-\kappa}u\in L^2(0,T;D(A_{\chi}))$,
$A_{\chi}^{-1-\kappa}\partial_t^\alpha u=\partial_t^\alpha A_{\chi}^{-1-\kappa}u\in L^2(\QQQ)$ and
\bel{c1c}
\norm{A_{\chi}^{-1-\kappa}u}_{L^2(0,T;D(A_{\chi}))}
+ \norm{\partial_t^\alpha A_{\chi}^{-1-\kappa}u}_{L^2(\QQQ)}
\leq C\norm{G}_{L^2(\QQQ)}.
\ee
From these results, one can easily verify that $u\in L^2(0,T;D(A_{\chi}^{-\kappa}))$, $\partial_t^\alpha u\in L^2(0,T;D(A_{\chi}^{-1-\kappa}))$ and 
estimates \eqref{c1b}-\eqref{c1c} imply \eqref{c1a}. This completes 
the proof of the theorem.

\section{Proof of Theorem \ref{t3}}
\subsection{For $m=1$}
Let us start with the case $m=1$. For this purpose, 
we fix $\epsilon\in(0,1/4)$.
In view of Theorem \ref{t1}, the solution $u\in L^r(0,T;D(A_{\chi}
^{-\epsilon})$ to \eqref{eq1} is given by
$$
u(t,\cdot)=\sum_{n=1}^\infty u_n(t)\phi_{\chi,n},
$$
where we set
$$
u_{1,n}(t):=-(-1)^\chi\int_0^t(t-s)^{\alpha-1}E_{\alpha,\alpha}(-\lambda_{\chi,n}(t-s)^\alpha)\left\langle f(s,\cdot),\tau_{\chi}^*\phi_{\chi,n}\right\rangle_{L^2(\partial\Omega)}ds,
$$
$$
u_{2,n}(t):=E_{\alpha,1}(-\lambda_{\chi,n}t^\alpha)\left\langle  u_0,\phi_{\chi,n}\right\rangle,
$$
$$ 
u_{3,n}(t)=\int_0^t(t-s)^{\alpha-1}E_{\alpha,\alpha}(-\lambda_{\chi,n}(t-s)
^\alpha)\mathds{1}_{(0,T)}(s)\left\langle \rho^{-1}F(s,\cdot),\phi_{\chi,n}
\right\rangle ds
$$
and
$$ 
u_n(t)=u_{1,n}(t)+u_{2,n}(t)+u_{3,n}(t), \quad n\in\mathbb N.
$$
First we see that $u_{1,n}\in W^{1,1}(0,T)$ and
\bel{t3c}\begin{aligned}
&\frac{du_{1,n}}{dt}(t) =: u_{1,n}'(t)
=-(-1)^\chi\partial_t\left(\int_0^ts^{\alpha-1}E_{\alpha,\alpha}(-\lambda
_{\chi,n}s^\alpha)\left\langle f(t-s,\cdot),\tau_{\chi}^*\phi_{\chi,n}\right\rangle_{L^2(\partial\Omega)}ds\right)\\
= & -(-1)^\chi\left\langle f(0,\cdot),\tau_{\chi}^*\phi_{\chi,n}\right\rangle_{H^{-\frac{1}{2}-\chi}(\partial\Omega),H^{\frac{1}{2}+\chi}(\partial\Omega)}t^{\alpha-1}E_{\alpha,\alpha}(-\lambda_{\chi,n}t^\alpha)\\
- & (-1)^\chi\int_0^ts^{\alpha-1}E_{\alpha,\alpha}(-\lambda_{\chi,n}s^\alpha)\left\langle \partial_tf(t-s,\cdot),\tau_{\chi}^*\phi_{\chi,n}\right\rangle_{H^{-\frac{1}{2}-\chi}(\partial\Omega),H^{\frac{1}{2}+\chi}(\partial\Omega)}ds.
\end{aligned}\ee
Similarly, using \cite[Lemma 3.2]{SY}, we deduce that 
$u_{i,n}\in W^{1,1}(0,T)$, $i=2,3$, where 
$$
u_{2,n}'(t)=-\lambda_{\chi,n}\left\langle  u_0,\phi_{\chi,n}\right\rangle t^{\alpha-1}E_{\alpha,\alpha}(-\lambda_{\chi,n}t^\alpha),
$$
$$\begin{aligned}
u_{3,n}'(t)=&\left\langle \rho^{-1} F(0,\cdot),\phi_{\chi,n}\right\rangle_{-1}t^{\alpha-1}E_{\alpha,\alpha}(-\lambda_{\chi,n}t^\alpha)\\
\ &+\int_0^ts^{\alpha-1}E_{\alpha,\alpha}(-\lambda_{\chi,n}s^\alpha)\left\langle \rho^{-1} \partial_tF(t-s,\cdot),\phi_{\chi,n}\right\rangle_{-1}ds.
\end{aligned}$$
On the other hand, the compatibility condition \eqref{com1} and the 
representation \eqref{trans} of the solution in the transposition sense to the elliptic problem \eqref{eq2},
imply that
$$
\lambda_{\chi,n}\left\langle  u_0,\phi_{\chi,n}\right\rangle=-(-1)^\chi\left\langle f(0,\cdot),\tau_{\chi}^*\phi_{\chi,n}\right\rangle_{H^{-\frac{1}{2}-\chi}(\partial\Omega),H^{\frac{1}{2}+\chi}(\partial\Omega)}+\left\langle \rho^{-1} F(0,\cdot),\phi_{\chi,n}\right\rangle_{-1}.
$$
Therefore, we have
$$
u_{2,n}'(t)=\left((-1)^\chi\left\langle f(0,\cdot),\tau_{\chi}^*\phi_{\chi,n}\right\rangle_{H^{-\frac{1}{2}-\chi}(\partial\Omega),H^{\frac{1}{2}+\chi}(\partial\Omega)}-\left\langle \rho^{-1} F(0,\cdot),\phi_{\chi,n}\right\rangle_{-1}\right)t^{\alpha-1}E_{\alpha,\alpha}(-\lambda_{\chi,n}t^\alpha)
$$
and, combining this with \eqref{t3c}, we obtain 
$$\begin{aligned}
u_n'(t)=&-(-1)^\chi\int_0^t(t-s)^{\alpha-1}E_{\alpha,\alpha}(-\lambda_{\chi,n}(t-s)^\alpha)\left\langle \partial_sf(s,\cdot),\tau_{\chi}^*\phi_{\chi,n}\right\rangle_{H^{-\frac{1}{2}-\chi}(\partial\Omega),H^{\frac{1}{2}+\chi}(\partial\Omega)}ds\\
&+\int_0^ts^{\alpha-1}E_{\alpha,\alpha}(-\lambda_{\chi,n}s^\alpha)\left\langle \rho^{-1} \partial_tF(t-s,\cdot),\phi_{\chi,n}\right\rangle_{-1}ds.
\end{aligned}$$
Thus, repeating the arguments in the proof of Theorem \ref{t1}, we deduce that
$$
\sum_{k=1}^nu_k'(t)\phi_{\chi,k},\quad n\in\mathbb N
$$
converges in $L^r(0,T;D(A_{\chi}^{-\epsilon}))$ and
$$
\norm{\sum_{k=1}^\infty u_k'\phi_{\chi,k}}_{L^r(0,T;D(A_{\chi}^{-\epsilon}))}
\leq C_\epsilon (\norm{\partial_tf}_{L^r(0,T;H^{-\frac{1}{2}-\chi}(\partial\Omega))}+\norm{\rho^{-1}\partial_tF}_{L^r(0,T;D(A_{\chi}^{-1}))}).
$$
Therefore $u\in W^{1,r}(0,T;D(A_{\chi}^{-\epsilon}))$ and
$$ 
\norm{u}_{W^{1,r}(0,T;D(A_{\chi}^{-\epsilon}))}\leq C_\epsilon \left(\norm{f}_{W^{1,r}(0,T;H^{-\frac{1}{2}-\chi}(\partial\Omega))}+\norm{\rho^{-1}F}_{W^{1,r}(0,T;D(A_{\chi}^{-1})}\right).
$$
Moreover, in view of \cite[Theorem 11.1, Chapter 1]{LM1}, 
since $D(A_{\chi}^{\epsilon})= H^{2\epsilon}_0(\Omega)$ for
$\epsilon\in(0,1/4)$ with the equivalent norms, the duality 
yields $H^{-2\epsilon}(\Omega)=D(A_{\chi}^{-\epsilon})$.
Thus, the last inequality  can be rewritten as
\bel{t3d} 
\norm{u}_{W^{1,r}(0,T;H^{-2\epsilon}(\Omega))}\leq C_\epsilon \left(\norm{f}_{W^{1,r}(0,T;H^{-\frac{1}{2}-\chi}(\partial\Omega))}+\norm{\rho^{-1}F}_{W^{1,r}(0,T;D(A_{\chi}^{-1})}\right).
\ee
In order to complete the proof for $m=1$, we need to prove that $u\in L^r(0,T;H^{2}(\Omega))$ satisfies
\bel{t3e} 
\norm{u}_{L^r(0,T;H^{2}(\Omega))}\leq C \left(\sum_{k=0}^1\norm{f}_{W^{1-k,r}(0,T;H^{2k-\frac{1}{2}-\chi}(\partial\Omega))}+\norm{\rho^{-1}F}_{W^{1-k,r}(0,T;D(A_{\chi}^{k-1}))}\right).
\ee
Using \cite[Lemma 3.2]{SY} and integrating by parts, 
for almost all $t\in(0,T)$, one can verify that
$$\begin{aligned}
u_{1,n}(t)&=-(-1)^\chi\int_0^ts^{\alpha-1}E_{\alpha,\alpha}(-\lambda_{\chi,n}s^\alpha)\left\langle f(t-s,\cdot),\tau_{\chi}^*\phi_{\chi,n}\right\rangle_{L^2(\partial\Omega)}ds\\
\ &=(-1)^\chi\int_0^t\partial_s\left(\frac{E_{\alpha,1}(-\lambda_{\chi,n}s^\alpha)}{\lambda_{\chi,n}}\right)\left\langle f(t-s,\cdot),\tau_{\chi}^*\phi_{\chi,n}\right\rangle_{L^2(\partial\Omega)}ds\\
\ &=(-1)^\chi\int_0^tE_{\alpha,1}(-\lambda_{\chi,n}s^\alpha)\frac{\left\langle \partial_tf(t-s,\cdot),\tau_{\chi}^*\phi_{\chi,n}\right\rangle_{H^{-\frac{1}{2}-\chi}(\partial\Omega),H^{\frac{1}{2}+\chi}(\partial\Omega)}}{\lambda_{\chi,n}}ds
\\
\ &\ \ \  +(-1)^\chi E_{\alpha,1}(-\lambda_{\chi,n}t^\alpha)\frac{\left\langle f(0,\cdot),\tau_{\chi}^*\phi_{\chi,n}\right\rangle_{H^{-\frac{1}{2}-\chi}(\partial\Omega),H^{\frac{1}{2}+\chi}(\partial\Omega)}}{\lambda_{\chi,n}}\\
&\ \ \ -(-1)^\chi\frac{\left\langle f(t,\cdot),\tau_{\chi}^*\phi_{\chi,n}\right\rangle_{L^2(\partial\Omega)}}{\lambda_{\chi,n}}.
\end{aligned}$$
Similarly we find
$$\begin{aligned}
u_{3,n}(t)=&\int_0^tE_{\alpha,1}(-\lambda_{\chi,n}s^\alpha)\frac{\left\langle \rho^{-1}\partial_tF(t-s,\cdot),\phi_{\chi,n}\right\rangle
_{-1}}{\lambda_{\chi,n}}ds\\
\ &-E_{\alpha,1}(-\lambda_{\chi,n}t^\alpha)\frac{\left\langle \rho^{-1}F(0,\cdot),\phi_{\chi,n}\right\rangle_{-1}}{\lambda_{\chi,n}} +\frac{\left\langle \rho^{-1} F(t,\cdot),\phi_{\chi,n}\right\rangle}{\lambda_{\chi,n}}.
\end{aligned}$$
Thus, applying again the compatibility condition \eqref{com1}, we find
\bel{t3f} \begin{aligned}
u_n(t)=&(-1)^\chi\int_0^tE_{\alpha,1}(-\lambda_{\chi,n}s^\alpha)\frac{\left\langle \partial_tf(t-s,\cdot),\tau_{\chi}^*\phi_{\chi,n}\right\rangle_{H^{-\frac{1}{2}-\chi}(\partial\Omega),H^{\frac{1}{2}+\chi}(\partial\Omega)}}{\lambda_{\chi,n}}ds\\
&+\int_0^tE_{\alpha,1}(-\lambda_{\chi,n}s^\alpha)\frac{\left\langle \rho^{-1}\partial_tF(t-s,\cdot),\phi_{\chi,n}\right\rangle_{-1}}{\lambda_{\chi,n}}ds\\
&+\frac{(-1)^{\chi+1}\left\langle f(t,\cdot),\tau_{\chi}^*\phi_{\chi,n}\right\rangle_{L^2(\partial\Omega)}+\left\langle \rho^{-1} F(t,\cdot),\phi_{\chi,n}\right\rangle}{\lambda_{\chi,n}}.
\end{aligned}\ee
We set  
$$
w_{1,n}(t):=(-1)^\chi\int_0^tE_{\alpha,1}(-\lambda_{\chi,n}s^\alpha)\frac{\left\langle \partial_tf(t-s,\cdot),\tau_{\chi}^*\phi_{\chi,n}\right\rangle_{H^{-\frac{1}{2}-\chi}(\partial\Omega),H^{\frac{1}{2}+\chi}(\partial\Omega)}}{\lambda_{\chi,n}}ds,
$$
$$
w_{2,n}(t):=\int_0^tE_{\alpha,1}(-\lambda_{\chi,n}s^\alpha)\frac{\left\langle \rho^{-1}\partial_tF(t-s,\cdot),\phi_{\chi,n}\right\rangle_{-1}}{\lambda_{\chi,n}}ds,
$$
and
$$
w_{3,n}(t):=\frac{(-1)^{\chi+1}\left\langle f(t,\cdot),\tau_{\chi}^*\phi_{\chi,n}\right\rangle_{H^{-\frac{1}{2}-\chi}(\partial\Omega),H^{\frac{1}{2}+\chi}(\partial\Omega)}+\left\langle \rho^{-1} F(t,\cdot),\phi_{\chi,n}\right\rangle}{\lambda_{\chi,n}}.
$$
In view of \eqref{t3f}, the condition
\bel{t3g}
\sum_{n=1}^\infty w_{i,n}\phi_{\chi,n}\in L^r(0,T;H^2(\Omega)),\quad 
i=1,2,3,
\ee
implies that $u\in L^r(0,T;H^{2}(\Omega))$. 
Moreover the estimate
\bel{t3h} \begin{aligned}
&\norm{\sum_{n=1}^\infty w_{i,n}\phi_{\chi,n}}_{L^r(0,T;H^{2}(\Omega))}\\
&\leq C \left(\sum_{k=0}^1\norm{f}_{W^{1-k,r}(0,T;H^{2k-\frac{1}{2}-\chi}(\partial\Omega))}+\norm{\rho^{-1}F}_{W^{1-k,r}(0,T;D(A_{\chi}^{k-1}))}\right),
\quad i=1,2,3,
\end{aligned}\ee
yields \eqref{t3e}. In order to complete the proof of our result for $m=1$, 
we have to prove \eqref{t3g} and \eqref{t3h}.  Applying formula (1.148) of 
\cite[Theorem 1.6]{P}, similarly to the proof of Theorem \ref{t1}, 
for $m<n$ we have 
$$\begin{aligned}
&\norm{\sum_{\ell=m}^n w_{1,\ell}\phi_{\chi,\ell}}_{L^r(0,T;D(A_{\chi}))}\\
&\leq C\norm{\int_0^ts^{-\alpha}\norm{\sum_{\ell=m}^n\frac{\left\langle \partial_tf(t-s,\cdot),\tau_{\chi}^*\phi_{\chi,\ell}\right\rangle_{H^{-\frac{1}{2}-\chi}(\partial\Omega),H^{\frac{1}{2}+\chi}(\partial\Omega)}}{\lambda_{\chi,\ell}}\phi_{\chi,\ell}}_{L^2(\Omega)}ds}_{L^r(0,T)}\\
&\leq C\norm{\sum_{\ell=m}^n\frac{\left\langle \partial_tf(t-s,\cdot),\tau_{\chi}^*\phi_{\chi,\ell}\right\rangle_{H^{-\frac{1}{2}-\chi}(\partial\Omega),H^{\frac{1}{2}+\chi}(\partial\Omega)}}{\lambda_{\chi,\ell}}\phi_{\chi,\ell}}
_{L^r(0,T;L^2(\Omega))}.
\end{aligned}$$
On the other hand, in view of Lemma \ref{l1} and 
$\partial_tf\in L^r(0,T;H^{-\frac{1}{2}-\chi}(\partial\Omega))$, the sequence 
$$
\sum_{\ell=1}^N\frac{\left\langle \partial_tf(t-s,\cdot),\tau_{\chi}^*\phi_{\chi,\ell}\right\rangle_{H^{-\frac{1}{2}-\chi}(\partial\Omega),H^{\frac{1}{2}+\chi}(\partial\Omega)}}{\lambda_{\chi,\ell}},\quad N\in\mathbb N
$$
converges in $L^r(0,T;L^2(\Omega))$. 
Therefore we have
$$
\lim_{m,n\to\infty}\norm{\sum_{\ell=m}^n\frac{\left\langle \partial_tf(t-s,\cdot),\tau_{\chi}^*\phi_{\chi,\ell}\right\rangle_{H^{-\frac{1}{2}-\chi}(\partial\Omega),H^{\frac{1}{2}+\chi}(\partial\Omega)}}{\lambda_{\chi,\ell}}\phi_{\chi,\ell}}_{L^r(0,T;L^2(\Omega))}=0,
$$
which implies that the sequence \eqref{t3g} with $i=1$ converges 
in $L^r(0,T;D(A_{\chi}))$. 
Moreover, since $D(A_{\chi})$ is embedded continuously into $H^2(\Omega)$, 
it follows that \eqref{t3g} with $i=1$ holds true.  The proof for
\eqref{t3g} with $i=2$ is similar and so omitted.

Using similar arguments again, we obtain estimate \eqref{t3h} for $i=1,2$. 
Now let $y$ satisfy $y(t,\cdot)\in H^1(\Omega)$ for almost all $t\in(0,T)$
and satisfy  
\bel{eleq}
\left\{ \begin{array}{rcll} 
 \cA y(t,x)& = & F(t,x), & x\in   \Omega,\\
\tau_{\chi} y(t,x) & = & f(t,\cdot), & x \in  \pd \Omega. 
\end{array}
\right.\ee
Combining $f\in L^r(0,T;H^{\frac{3}{2}-\chi}(\partial\Omega))$ and
$F\in L^r(0,T;L^2(\Omega))$ with the elliptic regularity of the operator $\cA$,
we have that $y\in L^r(0,T;H^2(\Omega))$ and 
\bel{t3i} 
\norm{y}_{L^r(0,T;H^2(\Omega))}\leq C(\norm{f}_{L^r(0,T;H^{\frac{3}{2}-\chi}(\partial\Omega))}+\norm{F}_{L^r(0,T;L^2(\Omega))}).
\ee
Moreover, following the proof of Lemma \ref{l1}, one can verify that
\bel{eleq1}
\left\langle y(t,\cdot),\phi_{\chi,n}\right\rangle=\frac{(-1)^{\chi+1}\left\langle f(t,\cdot),\tau_{\chi}^*\phi_{\chi,n}\right\rangle_{H^{-\frac{1}{2}-\chi}(\partial\Omega),H^{\frac{1}{2}+\chi}(\partial\Omega)}+\left\langle \rho^{-1} F(t,\cdot),\phi_{\chi,n}\right\rangle}{\lambda_{\chi,n}}=w_{3,n}(t).
\ee
Thus, we have
$$
\sum_{n=1}^\infty w_{3,n}(t)\phi_{\chi,n}=y(t,\cdot)
$$
and by $y\in L^r(0,T;H^2(\Omega))$, we deduce \eqref{t3g} with $i=3$. In addition, we obtain \eqref{t3h} for $i=3$ from estimate \eqref{t3i}. This proves that $u\in L^r(0,T;H^2(\Omega))$. Therefore we prove that
$u\in W^{1,r}(0,T;H^{-\epsilon}(\Omega))\cap L^r(0,T;H^2(\Omega))$ 
for all $\epsilon\in(0,1/2)$ and $u$ satisfies \eqref{t3a}. 
In order to complete the proof of Theorem \ref{t3},
it suffices to verify that $u$ is a strong solution to \eqref{eq1} 
in the sense of Definition \ref{d3}. For this purpose, let 
$$
\widetilde{f}\in W^{1,r}(0,+\infty;H^{-\frac{1}{2}-\chi}(\pd\Omega))\cap L^r(0,+\infty;H^{\frac{3}{2}-\chi}(\pd\Omega)),
$$
$$
\widetilde{F}\in W^{1,r}(0,+\infty;\rho D(A_{\chi}^{-1}))\cap L^r(0,+\infty;L^2(\Omega)),
$$
satisfy supp $\widetilde{f} \subset[0,T+1)\times\pd \Omega$, 
supp $\widetilde{F}\subset[0,T+1)\times\overline{\Omega}$ and 
\bel{t3k}
\widetilde{f}|_\Sigma=f,\quad \widetilde{F}|_Q=F.
\ee
Now we set 
$$
\widetilde{v}(t,\cdot):=\sum_{n=1}^\infty \widetilde{v}_n(t)\phi_{\chi,n}
$$
with 
$$\begin{aligned}
\widetilde{v}_n(t):=&-(-1)^\chi\int_0^t(t-s)^{\alpha-1}E_{\alpha,\alpha}(-\lambda_{\chi,n}(t-s)^\alpha)\left\langle \widetilde{f}(s,\cdot),\tau_{\chi}^*\phi_{\chi,n}\right\rangle_{L^2(\partial\Omega)}ds\\
&+\int_0^t(t-s)^{\alpha-1}E_{\alpha,\alpha}(-\lambda_{\chi,n}(t-s)^\alpha)\left\langle \widetilde{F}(s,\cdot),\phi_{\chi,n}\right\rangle_{L^2(\Omega)}ds\\
\ &+E_{\alpha,1}(-\lambda_{\chi,n}t^\alpha)\left\langle  u_0,\phi_{\chi,n}\right\rangle.
\end{aligned}$$
From the above arguments, one can verify that $\widetilde{v}\in W^{1,r}_{loc}(0,+\infty;H^{-\epsilon}(\Omega))\cap L^r_{loc}(0,+\infty;H^2(\Omega))$ 
and 
\bel{t3j} 
e^{-pt}\widetilde{v}(t,\cdot)\in W^{1,1}(0,+\infty;H^{-\epsilon}(\Omega))\cap L^1(0,+\infty;H^2(\Omega))
\ee
for $p>0$.
Moreover, following the proof of Theorem \ref{t3}, we see 
that the Laplace transform $\mathcal{L}\widetilde{v}(p,x)$ of $\widetilde{v}$ 
satisfies 
$$
p^\alpha(\mathcal{L}\widetilde{v}(p,x)
-p^{-1}u_0(x))+\rho^{-1}\mathcal A \mathcal{L}\widetilde{v}(p,x)
=\int_0^{T+1}e^{-pt}\rho^{-1}\widetilde{F}(t,x)dt,\quad (p,x)\in 
(0,+\infty) \times\Omega.
$$
Henceforth we write $D'(\Omega) = \mathcal{C}^{\infty}_0(\Omega)'$.
In view of \eqref{t3j}, this identity implies that, for all $p>0$ and all $\psi\in \mathcal C^\infty_0(\Omega)$, we have
$$\begin{aligned}
&\mathcal L\left[ \left\langle (\rho\partial_t^\alpha \widetilde{v}+\mathcal A \widetilde{v})(t,\cdot),\psi\right\rangle_{D'(\Omega),\mathcal C^\infty_0(\Omega)}\right](p)\\
&=\left\langle\mathcal L[ (\rho\partial_t^\alpha \widetilde{v}+\mathcal A \widetilde{v})](p,\cdot),\psi\right\rangle_{D'(\Omega),\mathcal C^\infty_0(\Omega)}\\
&=\left\langle\rho p^\alpha(\mathcal{L}\widetilde{v}(p,\cdot)-u_0)
+ \mathcal A \mathcal{L}\widetilde{v}(p,\cdot),\psi
\right\rangle_{D'(\Omega),\mathcal C^\infty_0(\Omega)}\\
&=\left\langle\int_0^{T+1}e^{-pt}\widetilde{F}(t,\cdot)dt,\psi\right\rangle_{D'(\Omega),\mathcal C^\infty_0(\Omega)}\\
&=\mathcal L\left[ \left\langle \widetilde{F}(t,\cdot),\psi\right\rangle_{D'(\Omega),\mathcal C^\infty_0(\Omega)}\right](p).
\end{aligned}$$
Therefore, for almost all $t>0$, we have
$$
\left\langle (\rho\partial_t^\alpha \widetilde{v}+\mathcal A \widetilde{v})(t,\cdot)-\widetilde{F}(t,\cdot),\psi\right\rangle_{D'(\Omega),\mathcal C^\infty_0(\Omega)}=0.
$$
On the other hand, in view of \eqref{t3k} one can easily verify that
$\widetilde{v}=u$ in $(0,T)\times\Omega$.  Hence for almost all $t\in(0,T)$
we have
$$
\left\langle (\rho\partial_t^\alpha u+\mathcal A u)(t,\cdot)-F(t,\cdot),\psi\right\rangle_{D'(\Omega),\mathcal C^\infty_0(\Omega)}=0,\quad \psi\in\mathcal C^\infty_0(\Omega).
$$
Furthermore, by $u\in L^r(0,T;H^2(\Omega))$, we have
$\partial_t^\alpha u=-\rho^{-1}\mathcal A u+F\in L^r(0,T;L^2(\Omega))$.
Therefore \eqref{id} holds true in $L^r(0,T;L^2(\Omega))$. In the same way, 
applying \eqref{t3j}, one can verify that condition \eqref{id1} is also 
fulfilled and $u$ is a strong solution to \eqref{eq1}.
This completes the proof of Theorem \ref{t3} for $m=1$.

\subsection{For $m\geq 2$}

We will consider only the case $m=2$. The case $m\geq3$ can be deduced in a similar way by an iteration argument. Applying the result for $m=1$, for all $\epsilon\in(0,1/2)$,  we obtain that $u\in W^{1,r}(0,T;H^{-\epsilon}(\Omega))\cap L^r(0,T;H^2(\Omega))$ and 
$$
\partial_tu(t,\cdot) = \sum_{n=1}^\infty u_n'(t)\phi_{\chi,n},
$$
where 
\bel{t3m}\begin{aligned}
u_n'(t)=&-(-1)^\chi\int_0^t(t-s)^{\alpha-1}E_{\alpha,\alpha}(-\lambda_{\chi,n}(t-s)^\alpha)\left\langle \partial_sf(s,\cdot),\tau_{\chi}^*\phi_{\chi,n}\right\rangle_{L^2(\partial\Omega)}ds\\
&+\int_0^t(t-s)^{\alpha-1}E_{\alpha,\alpha}(-\lambda_{\chi,n}(t-s)^\alpha)\left\langle \rho^{-1} \partial_sF(s,\cdot),\phi_{\chi,n}\right\rangle ds.
\end{aligned}\ee
Applying \eqref{tt3a}, we obtain
$$\begin{aligned}
u_n''(t)=&-(-1)^\chi\int_0^ts^{\alpha-1}E_{\alpha,\alpha}(-\lambda_{\chi,n}s^\alpha)\left\langle \partial_s^2f(s,\cdot),\tau_{\chi}^*\phi_{\chi,n}\right\rangle_{H^{-\frac{1}{2}-\chi}(\partial\Omega), H^{\frac{1}{2}+\chi}(\partial\Omega)}ds
                                                    \\
&+\int_0^t(t-s)^{\alpha-1}E_{\alpha,\alpha}(-\lambda_{\chi,n}(t-s)^\alpha)\left\langle \rho^{-1} \partial_s^2F(s,\cdot),\phi_{\chi,n}\right\rangle_{-1}ds
\end{aligned}.$$
and, following the proof for $m=1$, we reach $u\in W^{2,r}(0,T;H^{-\epsilon}(\Omega))$ and 
\bel{t3l} 
\norm{u}_{W^{2,r}(0,T; H^{-\epsilon}(\Omega))}\leq C_\epsilon \left(\norm{f}_{W^{2,r}(0,T; H^{-\frac{1}{2}-\chi}(\pd\Omega))}+\norm{F}_{W^{2,r}(0,T; D(A_{\chi}^{-\frac{1}{2}}))}\right).
\ee
Now let us prove that $\partial_t u\in L^r(0,T;H^2(\Omega))$. For this purpose, in view of \eqref{t3m}, applying the arguments for the proof of 
Theorem \ref{t1}, we see that $v=\partial_tu$ is the weak solution to
$$\left\{ \begin{array}{rcll} 
(\rho(x) \partial_t^{\alpha}+\cA ) v(t,x) & = & \partial_tF(t,x), & (t,x)\in  Q,\\
\tau_{\chi} v(t,x) & = & \partial_tf(t,x), & (t,x) \in \Sigma, \\  
 v(0,x) & = & 0, & x \in \Omega.
\end{array}
\right.
$$
Since 
$$
\partial_tf\in L^r(0,T;H^{\frac{3}{2}-\chi}(\partial\Omega))\cap 
W^{1,r}(0,T;H^{-\frac{1}{2}-\chi}(\partial\Omega)), \quad
\partial_tF\in L^r(0,T;L^2(\Omega))\cap W^{1,r}(0,T;\rho D(A_{\chi}^{-1}))
$$
and \eqref{tt3a} is fulfilled, applying the result of the theorem for $m=1$, we obtain 
$\partial_tu\in L^r(0,T;H^2(\Omega))\cap W^{1,r}(0,T;H^{-\epsilon}(\Omega))$ 
and
\bel{t3n} \begin{aligned}
&\norm{u}_{W^{1,r}(0,T; H^{2}(\Omega))}+\norm{u}_{W^{2,r}(0,T; H^{-\epsilon}
(\Omega))}\\
&\leq C_\epsilon \left(\sum_{k=1}^2\norm{f}_{W^{2-k,r}(0,T; H^{2k-\frac{1}{2}-\chi}(\pd\Omega))}+\norm{F}_{ W^{2-k,r}(0,T; H^{2(k-1)}(\Omega))}\right)\\
& \ \ \ +C_\epsilon\left(\norm{f}_{W^{2,r}(0,T; H^{-\frac{1}{2}-\chi}(\pd\Omega))}+\norm{\rho^{-1}F}_{ W^{2,r}(0,T; D(A_{\chi}^{-1}))}\right).
\end{aligned}\ee
Recalling that $u$ is a strong solution to \eqref{eq1}, we see 
that $u(t,\cdot)$ solves the boundary value problem
$$
\left\{ \begin{array}{rcll} 
 \cA u(t,x)& = & -\rho(x)\partial_t^\alpha u(t,x)+F(t,x), & x\in   \Omega,\\
\tau_{\chi} u(t,x) & = & f(t,x), & x \in  \pd \Omega 
\end{array}
\right.
$$
for almost all $t\in(0,T)$.  
On the other hand, since $u\in W^{1,r}(0,T; H^{2}(\Omega))$, 
Young's inequality for convolution yields 
$\partial_t^\alpha u\in L^r(0,T;H^{2}(\Omega))$. 
Therefore, by $f\in L^r(0,T; H^{2+\frac{3}{2}-\chi}(\pd\Omega))$ and
$F\in L^r(0,T;H^2(\Omega))$, applying the elliptic regularity of 
the operator $ \cA$ guaranteed by condition $(H_2)$, we see that 
$u\in L^r(0,T;H^4(\Omega))$ and  
$$ \begin{aligned}
&\norm{u}_{L^r(0,T; H^4(\Omega))}\\
&\leq C(\norm{u}_{W^{1,r}(0,T; H^{2}(\Omega))}+\norm{f}_{L^r(0,T; H^{2+\frac{3}{2}-\chi}(\pd\Omega))}+\norm{F}_{L^r(0,T;H^2(\Omega))}).
\end{aligned}$$
Combining this with \eqref{t3n}, we find
$$\begin{aligned}
&\norm{u}_{W^{1,r}(0,T; H^{2}(\Omega))}+\norm{u}_{W^{2,r}(0,T; H^{-\epsilon}(\Omega))}+\norm{u}_{L^r(0,T; H^4(\Omega))}\\
& \leq C_\epsilon \sum_{k=1}^2\left(\norm{f}_{W^{2-k,r}(0,T; H^{2k-\frac{1}{2}-\chi}(\pd\Omega))}+\norm{F}_{ W^{2-k,r}(0,T; H^{2(k-1)}(\Omega))}\right)\\
&\ \ \  +C_\epsilon\left(\norm{f}_{W^{2,r}(0,T; H^{-\frac{1}{2}-\chi}(\pd\Omega))}+\norm{\rho^{-1}F}_{ W^{2,r}(0,T; D(A_{\chi}^{-1}))}\right).
\end{aligned}$$
This completes the proof of Theorem \ref{t3}

\section{Proof of Theorem \ref{t4}}

For all $n\in\mathbb N$, we set 
$$
u_{1,n}(t):=-(-1)^\chi\int_0^t(t-s)^{\alpha-1}E_{\alpha,\alpha}(-\lambda_{\chi,n}(t-s)^\alpha)\left\langle f(s,\cdot),\tau_{\chi}^*\phi_{\chi,n}\right\rangle_{L^2(\partial\Omega)}ds,
$$
$$
u_{2,n}(t):=E_{\alpha,1}(-\lambda_{\chi,n}t^\alpha)\left\langle u_0,\phi_{\chi,n}\right\rangle,
$$
$$ 
u_{3,n}(t):=tE_{\alpha,2}(-\lambda_{\chi,n}t^\alpha)\left\langle  u_1,\phi_{\chi,n}\right\rangle,
$$
$$ 
u_{4,n}(t):=\int_0^t(t-s)^{\alpha-1}E_{\alpha,\alpha}(-\lambda_{\chi,n}(t-s)^\alpha)\mathds{1}_{(0,T)}(s)\left\langle \rho^{-1}F(s,\cdot),\phi_{\chi,n}\right\rangle ds
$$
and
$$
u_n:=u_{1,n}+u_{2,n}+u_{3,n}+u_{4,n}.
$$
Let $\epsilon>0$.
By means of Theorem \ref{t1}, we know that the sequence
$$
\sum_{n=1}^N u_n\phi_{\chi,n}, \quad N \in \mathbb N,
$$
converges in $L^r(0,T;D(A_{\chi}^{-\epsilon}))$ 
to the unique solution $u$ to \eqref{eq1}. 
Let us show that $u\in L^r(0,T;L^2(\Omega))\cap 
W^{2,r}(0,T;L^2(\Omega))$ and 
\bel{t4c}
\norm{u}_{W^{2,r}(0,T;L^2(\Omega))}\leq C\left(\norm{f}_{W^{2,r}(0,T;H^{\frac{1}{2}-\chi}(\partial\Omega))}+\norm{F}_{W^{2,r}(0,T;D(A_{\chi}^{-\frac{1}{2}}))}+\norm{u_1}_{H^{2\delta}(\Omega)}\right).\ee
Let us first prove that 
\bel{cc}
\lim_{N\to \infty} \sum_{n=1}^Nu_n\phi_{\chi,n},
\ee
converges in $L^r(0,T;L^2(\Omega))$ to $u$. Since $\rho^{-1}\in\mathcal C^1(\overline{\Omega})$, one sees that $\rho^{-1}F\in W^{2,r}(0,T;D(A_{\chi}^{-\frac{1}{2}}))$ and
\bel{ttt}
\norm{\rho^{-1}F}_{W^{k,r}(0,T;D(A_{\chi}^{-\frac{1}{2}}))}\leq C\norm{F}_{W^{2,r}(0,T;D(A_{\chi}^{-\frac{1}{2}}))},\quad k=0,1,2.
\ee
Combining this with  the decay property of the Mittag-Leffler functions given by formula (1.148) of \cite[Theorem 1.6]{P}, the convergence of \eqref{cc} 
follows from the convergence of
$$
\lim_{N\to\infty} \sum_{n=1}^Nu_{1,n}\phi_{\chi,n}
$$
in $L^r(0,T;L^2(\Omega))$. 
For this purpose, let $y\in L^r(0,T;L^2(\Omega))$ satisfy 
\eqref{eleq} with $F=0$. Since $f\in L^r(0,T;H^{\frac{1}{2}-\chi}(\partial\Omega))$, we have $y\in L^r(0,T;H^1(\Omega))$ and
$$
\norm{y}_{L^r(0,T;H^1(\Omega))}\leq C\norm{f}_{L^r(0,T;H^{\frac{1}{2}-\chi}(\partial\Omega))}.
$$
The same arguments yield  
$$
\norm{y}_{L^r(0,T;H^{\frac{1}{4}}(\Omega))}\leq C\norm{f}_{L^r(0,T;H^{\frac{1}{2}-\chi}(\partial\Omega))}.
$$
In view of \cite[Theorem 11.1, Chapter 1]{LM1}, we have $H^{\frac{1}{4}}(\Omega)=H^{\frac{1}{4}}_0(\Omega)=D(A_{\chi}^{\frac{1}{8}})$. 
Combination of this with \eqref{eleq1} implies 
\bel{t4d}
\norm{\left( \sum_{n=1}^\infty \abs{\lambda_{\chi,n}^{-\frac{7}{8}}
\left\langle f(s,\cdot),\tau_{\chi}^*\phi_{\chi,n}\right\rangle_{L^2(\partial\Omega)}}^2\right)^{1/2}}_{L^r(0,T)}
\leq C \norm{f}_{L^r(0,T;H^{\frac{1}{2}-\chi}(\partial\Omega))}.
\ee
Therefore, by formula (1.148) in \cite[Theorem 1.6]{P} and \eqref{t4d}, 
we have 
$$\begin{aligned}
&\limsup_{m,n\to\infty}\norm{\sum_{k=m}^nu_{1,k}\phi_{\chi,k}}_{L^r(0,T;L^2(\Omega))}\\
&\leq C\limsup_{m,n\to\infty}\norm{\int_0^t(t-s)^{\frac{\alpha}{8}-1}\norm{\sum_{k=m}^n \abs{\lambda_{\chi,k}^{-\frac{7}{8}}\left\langle f(s,\cdot),\tau_{\chi}^*\phi_{\chi,k}\right\rangle_{L^2(\partial\Omega)}}\phi_{\chi,k}}
_{L^2(\Omega)}}_{L^r(0,T)}\\
\ &\leq C\limsup_{m,n\to\infty}\norm{\sum_{k=m}^n \abs{\lambda_{\chi,k}^{-\frac{7}{8}}\left\langle f(s,\cdot),\tau_{\chi}^*\phi_{\chi,k}\right\rangle_{L^2(\partial\Omega)}}\phi_{\chi,k}}_{L^r(0,T;L^2(\Omega))}=0.
\end{aligned}$$
Thus, the sequence
$$
\sum_{n=1}^Nu_{1,n}\phi_{\chi,n},\quad N\in\mathbb N
$$
is a Cauchy sequence and so a convergent sequence in 
$L^r(0,T;L^2(\Omega))$. Therefore, we have $u\in L^r(0,T;L^2(\Omega))$. 
On the other hand, thanks to \eqref{com1}, we have
$$\begin{aligned}
\norm{u_0}_{H^1(\Omega)}&\leq C(\norm{f(0,\cdot)}_{H^{\frac{1}{2}-\chi}
(\partial\Omega))}+\norm{\rho^{-1}F(0,\cdot)}_{H^{-1}(\Omega)})\\
\ &\leq C\left(\norm{f}_{W^{2,r}(0,T;H^{\frac{1}{2}-\chi}(\partial\Omega))}
+\norm{F}_{W^{1,r}(0,T;D(A_{\chi}^{-\frac{1}{2}}))}\right).
\end{aligned}$$
Hence, from \eqref{ttt} and \eqref{t4d} we obtain
\bel{t4e}
\norm{u}_{L^r(0,T;L^2(\Omega))}\leq C\left(\norm{f}_{W^{2,r}(0,T;H^{\frac{1}{2}-\chi}(\partial\Omega))}+\norm{F}_{W^{2,r}(0,T;D(A_{\chi}^{-\frac{1}{2}}))}+\norm{u_1}_{H^{2\delta}(\Omega)}\right).
\ee
Here we notice that since $D(A_{\chi}^{-\frac{1}{2}})$ is
embedded continuously into $H^{-1}(\Omega)$ by the duality, because 
$H^1_0(\Omega)$ is embedded continuously into $D(A_{\chi}^{\frac{1}{2}})$.

Now let us show that $u\in W^{1,r}(0,T;L^2(\Omega))$ and 
\bel{t4f}
\norm{u}_{W^{1,r}(0,T;L^2(\Omega))}\leq C\left(\norm{f}_{W^{2,r}(0,T;H^{\frac{1}{2}-\chi}(\partial\Omega))}+\norm{F}_{W^{2,r}(0,T;H^{-1}(\partial\Omega))}+\norm{u_1}_{H^{2\delta}(\Omega)}\right).
\ee
By \cite[Lemma 3.2]{SY}, we have
$$\begin{aligned}
u_{1,n}'(t)=&-(-1)^\chi\int_0^ts^{\alpha-1}E_{\alpha,\alpha}(-\lambda_{\chi,n}s^\alpha)\left\langle \partial_tf(t-s,\cdot),\tau_{\chi}^*\phi_{\chi,n}\right\rangle_{L^2(\partial\Omega)}ds\\
\ &-(-1)^\chi \left\langle f(0,\cdot),\tau_{\chi}^*\phi_{\chi,n}\right\rangle_{H^{-\frac{1}{2}-\chi}(\partial\Omega),H^{\frac{1}{2}+\chi}(\partial\Omega)} t^{\alpha-1}E_{\alpha,\alpha}(-\lambda_{\chi,n}t^\alpha),\end{aligned}
$$
$$
u_{2,n}'(t)=-\lambda_{\chi,n}\left\langle u_0,\phi_{\chi,n}\right\rangle t^{\alpha-1}E_{\alpha,\alpha}(-\lambda_{\chi,n}t^\alpha),
$$
$$ 
u_{3,n}'(t)=E_{\alpha,1}(-\lambda_{\chi,n}t^\alpha)\left\langle  u_1,\phi_{\chi,n}\right\rangle
$$
and
$$\begin{aligned}
u_{4,n}'(t)=&\int_0^ts^{\alpha-1}E_{\alpha,\alpha}(-\lambda_{\chi,n}s^\alpha)\left\langle \rho^{-1}\partial_tF(t-s,\cdot),\phi_{\chi,n}\right\rangle_{-\frac{1}{2}}ds\\
\ &+ \left\langle \rho^{-1}F(0,\cdot),\phi_{\chi,n}\right\rangle_{-\frac{1}{2}} t^{\alpha-1}E_{\alpha,\alpha}(-\lambda_{\chi,n}t^\alpha)
\end{aligned}
$$
for almost all $t\in(0,T)$. 
In a similar way to the proof of Theorem \ref{t3}, we deduce that
$$
u_{2,n}'(t)=\left[(-1)^\chi \left\langle f(0,\cdot),\tau_{\chi}^*\phi_{\chi,n}\right\rangle_{H^{-\frac{1}{2}-\chi}(\partial\Omega),H^{\frac{1}{2}+\chi}(\partial\Omega)}-\left\langle \rho^{-1}F(0,\cdot),\phi_{\chi,n}\right\rangle_{-\frac{1}{2}}\right] t^{\alpha-1}E_{\alpha,\alpha}(-\lambda_{\chi,n}t^\alpha)
$$
and so 
$$\begin{aligned}
u_n'(t)=&-(-1)^\chi\int_0^ts^{\alpha-1}E_{\alpha,\alpha}(-\lambda_{\chi,n}s^\alpha)\left\langle \partial_tf(t-s,\cdot),\tau_{\chi}^*\phi_{\chi,n}\right\rangle_{H^{-\frac{1}{2}-\chi}(\partial\Omega),H^{\frac{1}{2}+\chi}(\partial\Omega)}ds\\
&+\int_0^ts^{\alpha-1}E_{\alpha,\alpha}(-\lambda_{\chi,n}s^\alpha)\left\langle \rho^{-1}\partial_tF(t-s,\cdot),\phi_{\chi,n}\right\rangle_{-\frac{1}{2}}ds\\
\ &+E_{\alpha,1}(-\lambda_{\chi,n}t^\alpha)\left\langle  u_1,\phi_{\chi,n}\right\rangle.
\end{aligned}$$
Therefore, in view of 
$\partial_tf\in L^r(0,T;H^{\frac{1}{2}-\chi}(\partial\Omega))$
and $\partial_tF\in L^r(0,T;D(A_{\chi}^{-\frac{1}{2}}))$,
repeating the arguments used for proving that $u\in L^r(0,T;L^2(\Omega))$, 
we see that $u\in W^{1,r}(0,T;L^2(\Omega))$ fulfills \eqref{t4f}.

Next let us prove that $u\in W^{2,r}(0,T;L^2(\Omega))$ and 
\bel{t4g}
\norm{u}_{W^{2,r}(0,T;L^2(\Omega))}\leq C\left(\norm{f}_{W^{2,r}(0,T;H^{\frac{1}{2}-\chi}(\partial\Omega))}+\norm{F}_{W^{2,r}(0,T;D(A_{\chi}^{-\frac{1}{2}}))}+\norm{u_1}_{H^{2\delta}(\Omega)}\right).
\ee
For this purpose, repeating the above arguments, we find
\bel{t4h}\begin{aligned}
u_n''(t)=&-(-1)^\chi\int_0^ts^{\alpha-1}E_{\alpha,\alpha}(-\lambda_{\chi,n}s^\alpha)\left\langle \partial_t^2f(t-s,\cdot),\tau_{\chi}^*\phi_{\chi,n}\right\rangle_{L^2(\partial\Omega)}ds\\
&+\int_0^ts^{\alpha-1}E_{\alpha,\alpha}(-\lambda_{\chi,n}s^\alpha)\left\langle \rho^{-1}\partial_t^2F(t-s,\cdot),\phi_{\chi,n}\right\rangle_{-\frac{1}{2}}ds+a_nt^{\alpha-1}E_{\alpha,\alpha}(-\lambda_{\chi,n}t^\alpha)
\end{aligned}\ee
for almost all $t\in(0,T)$, where 
$$
a_n:=-(-1)^\chi \left\langle \partial_tf(0,\cdot),\tau_{\chi}^*\phi_{\chi,n}\right\rangle_{H^{-\frac{1}{2}-\chi}(\partial\Omega),H^{\frac{1}{2}+\chi}(\partial\Omega)}+\left\langle \rho^{-1}\partial_tF(0,\cdot),\phi_{\chi,n}\right\rangle_{\frac{1}{2}}-\lambda_{\chi,n}\left\langle  u_1,\phi_{\chi,n}\right\rangle.
$$
Following the above arguments, we have
$$
\begin{aligned}
&\norm{\sum_{n=1}^\infty \left[\frac{(-1)^{\chi+1} \left\langle \partial_tf(0,\cdot),\tau_{\chi}^*\phi_{\chi,n}\right\rangle_{L^2(\partial\Omega)}+\left\langle \rho^{-1}\partial_tF(0,\cdot),\phi_{\chi,n}\right\rangle_{\frac{1}{2}}}{\lambda_{\chi,n}}\right]\phi_{\chi,n}}_{H^1(\Omega)}\\
&\leq C(\norm{\partial_tf(0,\cdot)}_{H^{\frac{1}{2}-\chi}(\partial\Omega)}+\norm{\partial_tF(0,\cdot)}_{D(A_{\chi}^{-\frac{1}{2}})})\\
\ &\leq C(\norm{f}_{W^{2,r}(0,T;H^{\frac{1}{2}-\chi}(\partial\Omega))}+\norm{F}_{W^{2,r}(0,T;D(A_{\chi}^{-\frac{1}{2}}))}).
\end{aligned}$$
By $H^{2\delta}(\Omega)=H^{2\delta}_0(\Omega)=D(A_{\chi}^\delta)$, we see that
$$\begin{aligned}
&\norm{\sum_{n=1}^\infty \left[\frac{(-1)^{\chi+1} \left\langle \partial_tf(0,\cdot),\tau_{\chi}^*\phi_{\chi,n}\right\rangle_{L^2(\partial\Omega)}+\left\langle \rho^{-1}\partial_tF(0,\cdot),\phi_{\chi,n}\right\rangle_{\frac{1}{2}}}{\lambda_{\chi,n}^{1-\delta}}\right]\phi_{\chi,n}}_{L^2(\Omega)}\\
&\leq C\left(\norm{f}_{W^{2,r}(0,T;H^{\frac{1}{2}-\chi}(\partial\Omega))}+\norm{F}_{W^{2,r}(0,T;D(A_{\chi}^{-\frac{1}{2}}))}\right).
\end{aligned}$$
Therefore, using (1.148) of \cite[Theorem 1.6]{P}, we deduce that
$$\begin{aligned}
&\norm{\sum_{n=1}^\infty \left[a_nt^{\alpha-1}E_{\alpha,\alpha}(-\lambda_{\chi,n}t^\alpha)\right]\phi_{\chi,n}}_{L^2(\Omega)}\\
&\leq C\left(\norm{f}_{W^{2,r}(0,T;H^{\frac{1}{2}-\chi}(\partial\Omega))}+\norm{F}_{W^{2,r}(0,T;D(A_{\chi}^{-\frac{1}{2}}))}+\norm{u_1}_{H^{2\delta}(\Omega)}\right)t^{\delta\alpha-1}
\end{aligned}$$
for almost all $t \in (0,T)$.
Taking into account condition \eqref{t4a}, we obtain
$$\begin{aligned}
&\norm{\sum_{n=1}^\infty \left[a_nt^{\alpha-1}E_{\alpha,\alpha}(-\lambda_{\chi,n}t^\alpha)\right]\phi_{\chi,n}}_{L^r(0,T;L^2(\Omega))}\\
&\leq CT^{\delta\alpha-1+\frac{1}{r}}\left(\norm{f}_{W^{2,r}(0,T;H^{\frac{1}{2}-\chi}(\partial\Omega))}+\norm{F}_{W^{2,r}(0,T;D(A_{\chi}^{-\frac{1}{2}}))}+\norm{u_1}_{H^{2\delta}(\Omega)}\right).
\end{aligned}$$
Repeating the arguments used for proving $u\in L^r(0,T;L^2(\Omega))$, 
we see that the sequences in $N$: 
$$
\sum_{n=1}^N-(-1)^\chi\left(\int_0^ts^{\alpha-1}E_{\alpha,\alpha}(-\lambda_{\chi,n}s^\alpha)\left\langle \partial_t^2f(t-s,\cdot),\tau_{\chi}^*\phi_{\chi,n}
\right\rangle_{L^2(\partial\Omega)}ds\right) \phi_{\chi,n} 
$$
and
$$
\sum_{n=1}^N \left(\int_0^ts^{\alpha-1}E_{\alpha,\alpha}(-\lambda_{\chi,n}s^\alpha)\left\langle \rho^{-1}\partial_t^2F(t-s,\cdot),\phi_{\chi,n}\right\rangle_{-\frac{1}{2}}ds\right)\phi_{\chi,n},
$$
converge in $L^r(0,T;L^2(\Omega))$ and we have
$$\begin{aligned}
&\norm{\sum_{n=1}^\infty-(-1)^\chi\left(\int_0^ts^{\alpha-1}E_{\alpha,\alpha}
(-\lambda_{\chi,n}s^\alpha)\left\langle \partial_t^2f(t-s,\cdot),\tau_{\chi}^*
\phi_{\chi,n}\right\rangle_{L^2(\partial\Omega)}ds\right)\phi_{\chi,n}}
_{L^r(0,T;L^2(\Omega))}\\
& +\norm{\sum_{n=1}^\infty\left(\int_0^ts^{\alpha-1}E_{\alpha,\alpha}(-\lambda_{\chi,n}s^\alpha)\left\langle \rho^{-1}\partial_t^2F(t-s,\cdot),\phi_{\chi,n}\right\rangle_{-\frac{1}{2}}ds\right)\phi_{\chi,n}}_{L^r(0,T;L^2(\Omega))}\\
&\leq C\left(\norm{f}_{W^{2,r}(0,T;H^{\frac{1}{2}-\chi}(\partial\Omega))}+\norm{F}_{W^{2,r}(0,T;D(A_{\chi}^{-\frac{1}{2}}))}\right).
\end{aligned}$$
Therefore, we have $u\in W^{2,r}(0,T;L^2(\Omega))$ and combining these two last estimates with \eqref{t4f} and \eqref{t4h}, we reach \eqref{t4g}. 

Next we prove that $u\in L^r(0,T;H^2(\Omega))$ is a strong solution to
\eqref{eq1} and 
\bel{t4i}\begin{aligned}
&\norm{u}_{L^r(0,T;H^2(\Omega))}\\
&\leq C\left(\norm{f}_{W^{2,r}(0,T;H^{\frac{1}{2}-\chi}(\partial\Omega))}+\norm{F}_{W^{2,r}(0,T;D(A_{\chi}^{-\frac{1}{2}}))}+\norm{u_1}_{H^{2\delta}(\Omega)}\right)\\
&+ \ \ \ C\left(\norm{f}_{L^r(0,T;H^{\frac{3}{2}-\chi}(\partial\Omega))}+\norm{F}_{L^r(0,T;L^2(\Omega))}\right).
\end{aligned}
\ee
For this purpose, in a similar way to Theorem \ref{t3}, let 
$$
\widetilde{f}\in W^{2,r}(0,+\infty;H^{\frac{1}{2}-\chi}(\pd\Omega))
\cap L^r(0,+\infty;H^{\frac{3}{2}-\chi}(\pd\Omega))
$$
and
$$
\widetilde{F}\in W^{2,r}(0,+\infty;D(A_{\chi}^{-\frac{1}{2}}))\cap L^r(0,+\infty;L^2(\Omega))
$$
satisfy supp $\widetilde{f}\subset[0,T+1)\times\pd \Omega$, 
supp $\widetilde{F} \subset[0,T+1)\times\overline{ \Omega}$ and \eqref{t3k}.
We set 
$$
\widetilde{v}(t,\cdot):=\sum_{n=1}^\infty \widetilde{v}_n(t)\phi_{\chi,n}
$$
where  
$$\begin{aligned}
\widetilde{v}_n(t)=&-(-1)^\chi\int_0^t(t-s)^{\alpha-1}E_{\alpha,\alpha}(-\lambda_{\chi,n}(t-s)^\alpha)\left\langle \widetilde{f}(s,\cdot),\tau_{\chi}^*\phi_{\chi,n}\right\rangle_{L^2(\partial\Omega)}ds\\
\ &+\int_0^t(t-s)^{\alpha-1}E_{\alpha,\alpha}(-\lambda_{\chi,n}(t-s)^\alpha)\left\langle \rho^{-1}\widetilde{F}(s,\cdot),\phi_{\chi,n}\right\rangle ds
+u_{2,n}(t)+u_{3,n}(t),\quad t>0.
\end{aligned}$$
From the above arguments, one can verify that $\widetilde{v}\in 
W^{2,r}_{loc}(0,+\infty;L^2(\Omega))$ and  
\bel{t4jj}
e^{-pt}\widetilde{v}(t,\cdot)\in W^{2,1}(0,+\infty;L^2(\Omega))
\ee
for $p>0$.  We assert 
\bel{t4j}
e^{-pt}\widetilde{v}(t,\cdot)\in L^1(0,+\infty;H^1(\Omega)),\quad p>0.
\ee
{\bf Proof of \eqref{t4j}.}
In a similar way to the proof of Theorem \ref{t3}, integrating by parts and using the compatibility condition \eqref{com1}, we obtain
$$\begin{aligned}
\widetilde{v}_n(t)=&(-1)^\chi\int_0^tE_{\alpha,1}(-\lambda_{\chi,n}s^\alpha)\frac{\left\langle \partial_t\widetilde{f}(t-s,\cdot),\tau_{\chi}^*\phi_{\chi,n}\right\rangle_{H^{-\frac{1}{2}-\chi}(\partial\Omega),H^{\frac{1}{2}+\chi}(\partial\Omega)}}{\lambda_{\chi,n}}ds\\
&-(-1)^\chi\frac{\left\langle \widetilde{f}(t,\cdot),\tau_{\chi}^*\phi_{\chi,n}\right\rangle_{L^2(\partial\Omega)}}{\lambda_{\chi,n}}+u_{3,n}(t)\\
&-\int_0^tE_{\alpha,1}(-\lambda_{\chi,n}s^\alpha)\frac{\left\langle \rho^{-1}\partial_t\widetilde{F}(t-s,\cdot),\phi_{\chi,n}\right\rangle_{-\frac{1}{2}}}{\lambda_{\chi,n}}ds
+ \frac{\left\langle \rho^{-1}\widetilde{F}(t,\cdot),\phi_{\chi,n}\right\rangle}{\lambda_{\chi,n}}.
\end{aligned}$$
Since $\widetilde{f}\in L^1(0,+\infty;H^{\frac{3}{2}-\chi}
(\pd\Omega))$, $\widetilde{F}\in L^1(0,+\infty;L^2(\Omega))$ 
and $u_0\in L^2(\Omega)$, one can verify 
$$\begin{aligned}
&\norm{e^{-pt}\sum_{n=1}^\infty \left(-(-1)^\chi\frac{\left\langle \widetilde{f}(t,\cdot),\tau_{\chi}^*\phi_{\chi,n}\right\rangle_{L^2(\partial\Omega)}}{\lambda_{\chi,n}}+u_{3,n}(t)\right)\phi_{\chi,n}}_{L^1(0,+\infty;H^1(\Omega))}\\
&+\norm{e^{-pt}\frac{\left\langle \rho^{-1}\widetilde{F}(t,\cdot),\phi_{\chi,n}\right\rangle}{\lambda_{\chi,n}}}_{L^1(0,+\infty;H^1(\Omega))}\\
&\leq C\left(\norm{\widetilde{f}}_{L^1(0,+\infty;H^{\frac{3}{2}-\chi}(\pd\Omega))}+\norm{\widetilde{F}}_{L^1(0,+\infty;L^2(\Omega))}
+ \left(\int_0^\infty e^{-pt}t^{1-\alpha}dt\right)\norm{u_1}_{L^2(\Omega)}\right)<\infty
\end{aligned}$$
for all $p>0$.
Therefore, \eqref{t4j} will be proved if we show  
\bel{t4k}
\norm{e^{-pt}\sum_{n=1}^\infty(-1)^\chi\left(
\int_0^tE_{\alpha,1}(-\lambda_{\chi,n}s^\alpha)\frac{\left\langle \partial_t\widetilde{f}(t-s,\cdot),\tau_{\chi}^*\phi_{\chi,n}\right\rangle}
{\lambda_{\chi,n}}ds\right) \phi_{\chi,n}}_{L^1(0,+\infty;H^1(\Omega))}<\infty
\ee
and
\bel{t4kk}
\norm{e^{-pt}\sum_{n=1}^\infty(-1)^\chi\left(
\int_0^tE_{\alpha,1}(-\lambda_{\chi,n}s^\alpha)\frac{\left\langle \rho^{-1}\partial_t\widetilde{F}(t-s,\cdot),\phi_{\chi,n}\right\rangle_{\frac{1}{2}}}{\lambda_{\chi,n}}ds\right) 
\phi_{\chi,n}}_{L^1(0,+\infty;H^1(\Omega))}<\infty
\ee
for all $p>0$.
Applying formula (1.148) in \cite[Theorem 1.6]{P}, we obtain
$$\begin{aligned}
&\norm{\sum_{n=1}^\infty(-1)^\chi\left( \int_0^tE_{\alpha,1}(-\lambda_{\chi,n}s^\alpha)\frac{\left\langle \partial_t\widetilde{f}(t-s,\cdot),\tau_{\chi}^*\phi_{\chi,n}\right\rangle_{L^2(\partial\Omega)}}{\lambda_{\chi,n}}ds\right)
\phi_{\chi,n}}_{H^1(\Omega)}\\
&\leq C\norm{\sum_{n=1}^\infty(-1)^\chi\left(
\int_0^tE_{\alpha,1}(-\lambda_{\chi,n}s^\alpha)\frac{\left\langle \partial_t\widetilde{f}(t-s,\cdot),\tau_{\chi}^*\phi_{\chi,n}\right\rangle_{L^2(\partial\Omega)}}{\lambda_{\chi,n}}ds\right)\phi_{\chi,n}}_{D(A_{\chi}^{\frac{1}{2}})}\\
&\leq C\int_0^t(t-s)^{-\frac{\alpha}{2}}\norm{\sum_{n=1}^\infty\frac{\left\langle \partial_t\widetilde{f}(t-s,\cdot),\tau_{\chi}^*\phi_{\chi,n}\right\rangle_{L^2(\partial\Omega)}}{\lambda_{\chi,n}}\phi_{\chi,n}}_{L^2(\Omega)}ds\\
&\leq C\int_0^t(t-s)^{-\frac{\alpha}{2}}\norm{\partial_t\widetilde{f}(s,\cdot)}_{H^{\frac{1}{2}-\chi}(\pd\Omega)}ds
\end{aligned}$$
for almost all $t>0$. 
It follows that 
$$\begin{aligned}
&\norm{e^{-pt}\sum_{n=1}^\infty(-1)^\chi\left(
\int_0^tE_{\alpha,1}(-\lambda_{\chi,n}s^\alpha)\frac{\left\langle \partial_t\widetilde{f}(t-s,\cdot),\tau_{\chi}^*\phi_{\chi,n}\right\rangle_{L^2(\partial\Omega)}}{\lambda_{\chi,n}}ds\right)
\phi_{\chi,n}}_{L^1(0,+\infty;H^1(\Omega))}\\
&\leq C\norm{(e^{-pt}t^{-\frac{\alpha}{2}}\mathds{1}_{(0,+\infty)})
*\left(\norm{\partial_t\widetilde{f}(t,\cdot)}_{H^{\frac{1}{2}-\chi}
(\pd\Omega)}\mathds{1}_{(0,+\infty)}\right)}_{L^1(0,+\infty)}\\
&\leq C_p\norm{\widetilde{f}}_{W^{1,r}(0,+\infty;H^{\frac{1}{2}-\chi}(\pd\Omega))} <\infty
\end{aligned}
$$
for all $p>0$.  This proves \eqref{t4k}. 
Moreover we can verify \eqref{t4kk} in the same way.   
Thus the proof of \eqref{t4j} is complete.\qed
 

By \eqref{t4jj} and \eqref{t4j}, the Laplace transform 
$\mathcal{L} \widetilde{v}(p,\cdot)$ of $\widetilde{v}$ satisfies 
$$
p^\alpha(\mathcal{L} \widetilde{v}(p,x) - p^{-1}u_0(x)
- p^{-2}u_1(x)) + \rho^{-1}\mathcal A \mathcal{L} \widetilde{v}(p,x)
= \int_0^{T+1}e^{-pt}\rho^{-1}\widetilde{F}(t,x)dt
$$
for $(p,x)\in (0, +\infty) \times\Omega$.

Repeating the corresponding arguments for Theorem \ref{t3}, we deduce that
$$
\left\langle (\rho\partial_t^\alpha \widetilde{v}+\mathcal A \widetilde{v})(t,\cdot)-\widetilde{F}(t,\cdot),\psi\right\rangle_{D'(\Omega),\mathcal C^\infty_0(\Omega)}=0
$$
for almost all $t\in (0, +\infty)$.
On the other hand, in view of \eqref{t3k} one can easily verify that 
$\widetilde{v}=u$ in $(0,T)\times\Omega$, which implies that 
$$
\left\langle (\rho\partial_t^\alpha u+\mathcal A u)(t,\cdot)-F(t,\cdot),\psi\right\rangle_{D'(\Omega),\mathcal C^\infty_0(\Omega)}=0,\quad \psi\in\mathcal C^\infty_0(\Omega)
$$
for almost all $t\in(0,T)$.

Hence, since $\mathcal{L} \widetilde{v}(p,\cdot)$ satisfies
$$
\mathcal{L} \widetilde{v}(p,x) = \mathcal L \widetilde{f}(p,x),
\quad x\in\pd\Omega
$$
for all $p>0$, it follows from \eqref{t4j} that 
$u(t,\cdot)$ solves the boundary value problem
$$
\left\{ \begin{array}{rcll} 
\rho(x)^{-1} \cA u(t,x)& = & -\partial_t^\alpha u(t,x)+F(t,x), & x\in 
\Omega,\\
\tau_{\chi} u(t,x) & = & f(t,x), & x \in  \pd \Omega 
\end{array}
\right.$$
for almost all $t\in(0,T)$.
Therefore, by $u\in W^{2,r}(0,T;L^2(\Omega)))$, we obtain 
$\partial_t^\alpha u\in L^r(0,T;L^2(\Omega))$ and, combining this with 
$f\in L^r(0,T;H^{\frac{3}{2}-\chi}(\pd\Omega))$ and 
$F\in L^r(0,T;L^2(\Omega))$, we see that $u\in L^r(0,T;H^2(\Omega))$. From the above properties, one can easily verify that $u$ is a strong solution 
to \eqref{eq1} in the sense of Definition \ref{d3}. Moreover, applying Young's 
inequality, we see that $u$  satisfies
$$\begin{aligned}\norm{u}_{L^r(0,T;H^2(\Omega))}&\leq C\left(\norm{f}_{L^r(0,T;H^{\frac{3}{2}-\chi}(\pd\Omega))}+\norm{F}_{L^r(0,T;L^2(\Omega))}+\norm{\partial_t^\alpha u}_{L^r(0,T;L^2(\Omega))}\right)\\
\ &\leq  C\left(\norm{f}_{L^r(0,T;H^{\frac{3}{2}-\chi}(\pd\Omega))}+\norm{F}_{L^r(0,T;L^2(\Omega))}+\norm{ u}_{W^{2,r}(0,T;L^2(\Omega))}\right).\end{aligned}$$
Combining this with \eqref{t4g}, we obtain \eqref{t4i}. Finally, using arguments similar to those used for proving \eqref{t4g}, we deduce that $u\in W^{1,r}(0,T;H^1(\Omega))$ satisfies
$$\norm{u}_{W^{1,r}(0,T;H^1(\Omega))}\leq C\left(\norm{f}_{W^{2,r}(0,T;H^{\frac{1}{2}-\chi}(\partial\Omega))}+\norm{F}_{W^{2,r}(0,T;D(A_{\chi}^{-\frac{1}{2}}))}+\norm{u_1}_{H^{2\delta}(\Omega)}\right).$$
Combining \eqref{t4g} and \eqref{t4i} with the above estimate, we reach 
\eqref{t4b}.  Thus this completes the proof of Theorem \ref{t4}.

\section{Proof of Theorem \ref{t5}}

We will consider only the case $m=3$, because the case $m\geq4$ can be 
proved by an iteration argument. Let $u$ be the weak solution to \eqref{eq1}. 
Similarly to the previous section, one can verify that 
$u\in W^{1,r}(0,T;H^1(\Omega))$ and 
$$
\norm{u}_{W^{1,r}(0,T;H^1(\Omega))}\leq C\left(\norm{f}_{W^{2,r}(0,T;H^{\frac{1}{2}-\chi}(\partial\Omega))}+\norm{F}_{W^{2,r}(0,T;D(A_{\chi}^{-\frac{1}{2}}))}
\right).
$$
Moreover, condition \eqref{com2} implies that 
$$
\lambda_{\chi,n}\left\langle  u_1,\phi_{\chi,n}\right\rangle=-(-1)^\chi \left\langle \partial_tf(0,\cdot),\tau_{\chi}^*\phi_{\chi,n}\right\rangle_{L^2(\partial\Omega)}+\left\langle \rho^{-1}\partial_tF(0,\cdot),\phi_{\chi,n}\right\rangle.$$
Combining this with \eqref{t4h}, for almost all $t\in(0,T)$, we obtain
\bel{t5d}\begin{aligned}
u_n''(t)=&-(-1)^\chi\int_0^ts^{\alpha-1}E_{\alpha,\alpha}(-\lambda_{\chi,n}s^\alpha)\left\langle \partial_t^2f(t-s,\cdot),\tau_{\chi}^*\phi_{\chi,n}\right\rangle_{L^2(\partial\Omega)}ds\\
\ &+\int_0^ts^{\alpha-1}E_{\alpha,\alpha}(-\lambda_{\chi,n}s^\alpha)\left\langle \rho^{-1}\partial_t^2F(t-s,\cdot),\phi_{\chi,n}\right\rangle ds.
\end{aligned}
\ee
Similarly to the proof of Theorem \ref{t1}, we can see that $w=\partial_t^2u$ 
is the unique weak solution to 
$$\left\{ \begin{array}{rcll} 
(\rho(x) \partial_t^{\alpha}+\cA ) w(t,x) & = & \partial_t^2F(t,x), & (t,x)\in
(0,T) \times \Omega,\\
\tau_{\chi} w(t,x) & = & \partial_t^2f(t,x), & (t,x) \in 
(0,T) \times \pd \Omega, \\  
\pd_t^k w(0,x) & = & 0, & x \in \Omega,\ k=0,1.
\end{array}
\right.
$$
Since $\partial_t^2F\in L^r(0,T;L^2(\Omega))\cap W^{1,r}(0,T;D(A_{\chi}
^{-\frac{1}{2}}))$ and $\partial_t^2f\in  W^{1,r}(0,T;H^{\frac{1}{2}-\chi}
(\pd\Omega))$,  following the proof of Theorem \ref{t4}, we can prove that 
$\partial_t^2u\in L^r(0,T;H^1(\Omega))$ and 
$$\begin{aligned}
\norm{\partial_t^2u}_{L^r(0,T; H^{1}(\Omega))}\leq& C \left(\norm{\pd_t^2f}_{W^{1,r}(0,T;H^{\frac{1}{2}-\chi}(\pd\Omega))}+\norm{\partial_t^2F}
_{L^r(0,T;L^2(\Omega))}\right)\\
\ &+C \norm{\partial_t^2F}_{W^{1,r}(0,T; D(A_{\chi}^{-\frac{1}{2}}))}.
\end{aligned}$$
It follows that $u\in W^{2,r}(0,T; H^1(\Omega))$ satisfies
\bel{t5g}
\begin{aligned}\norm{u}_{W^{2,r}(0,T; H^{1}(\Omega))}\leq& C \left(\sum_{k=2}^3\norm{f}_{W^{3-k,r}(0,T; H^{k-\frac{1}{2}-\chi}(\pd\Omega))}+\norm{F}_{W^{3-k,r}(0,T; H^{k-2}(\Omega))}\right)\\
\ &+C \left(\norm{f}_{W^{3,r}(0,T; H^{\frac{1}{2}-\chi}(\pd\Omega))}+\norm{F}_{W^{3,r}(0,T; D(A_{\chi}^{-\frac{1}{2}}))}\right).
\end{aligned}\ee
Repeating the arguments used for Theorem \ref{t4}, from these properties
we obtain that $u$ is a strong solution to \eqref{eq1} and $u(t,\cdot)$ 
solves the boundary value problem
$$
\left\{ \begin{array}{rcll} 
\rho(x)^{-1} \cA u(t,x)& = & -\partial_t^\alpha u(t,x), & x\in   \Omega,\\
\tau_{\chi} u(t,x) & = & f(t,x), & x \in  \pd \Omega 
\end{array}
\right.
$$
for almost all $t\in(0,T)$.  
Therefore, by means of $u\in W^{2,r}(0,T;H^1(\Omega))$, we deduce that $\partial_t^\alpha u\in L^r(0,T;H^1(\Omega))$ and, combining this with 
$f\in L^r(0,T;H^{\frac{5}{2}-\chi}(\pd\Omega))$ and 
$F\in L^r(0,T;H^1(\Omega))$, we see that $u\in L^r(0,T;H^3(\Omega))$. 
Furthermore $u$  satisfies
$$\begin{aligned}
\norm{u}_{L^r(0,T;H^3(\Omega))}&\leq C\left(\norm{f}_{L^r(0,T;H^{\frac{5}{2}-\chi}(\pd\Omega))}+\norm{F}_{L^r(0,T;H^1(\Omega))}+\norm{\partial_t^\alpha u}_{L^r(0,T;H^1(\Omega))}\right)\\
\ &\leq  C\left(\norm{f}_{L^r(0,T;H^{\frac{5}{2}-\chi}(\pd\Omega))}+\norm{F}_{L^r(0,T;H^1(\Omega))}+\norm{ u}_{W^{2,r}(0,T;H^1(\Omega))}\right)
\end{aligned}$$
and, applying \eqref{t5g}, we obtain
$$\begin{aligned}
\norm{u}_{L^r(0,T; H^3(\Omega))}\leq& C \left(\sum_{k=2}^3\norm{f}
_{W^{3-k,r}(0,T; H^{k-\frac{1}{2}-\chi}(\pd\Omega))}+\norm{F}_{W^{3-k,r}(0,T; H^{k-2}(\Omega))}\right)\\
\ &+C \left(\norm{f}_{W^{3,r}(0,T; H^{\frac{1}{2}-\chi}(\pd\Omega))}+\norm{F}_{W^{3,r}(0,T; D(A_{\chi}^{-\frac{1}{2}}))}\right).
\end{aligned}$$
In the same way, we prove that $u\in W^{3,r}(0,T;L^2(\Omega))\cap 
W^{2,r}(0,T;H^1(\Omega))\cap W^{1,r}(0,T;H^2(\Omega))$ and $u$ satisfies 
\eqref{t5b}.   Thus this completes the proof of Theorem \ref{t5}.

\section{Proof of Propositions \ref{p1} and \ref{p2}}

\textbf{Proof of Proposition \ref{p1}.} We start with the first statement of 
Proposition \ref{p1}. For this purpose, we fix 
$f\in\mathcal C^\infty([0,T]\times \partial\Omega)$, $F\in\mathcal 
C^\infty([0,T]\times \overline{\Omega})$ and $u_0\in\mathcal C^\infty(\overline{\Omega})$ which do not satisfy 
\eqref{com1}.  We assume also that \eqref{eq1} admits a unique solution $u\in W^{1,1}(0,T;D(A_{\chi}^{-\frac{1}{2}}))$.
In view of \eqref{eleq1}, there exists $n_0\in\mathbb N$ such that 
\bel{r1a}b_0:=-(-1)^\chi\left\langle f(0,\cdot),\tau_{\chi}^*
\phi_{\chi,n_0}\right\rangle_{L^2(\partial\Omega)}
+ \left\langle \rho^{-1}F(0,\cdot),\phi_{\chi,n_0}\right\rangle 
-\lambda_{\chi,n_0}\left\langle u_0,\phi_{\chi,n_0}\right\rangle\neq0.
\ee 
We will prove that $u\notin W^{1,(1-\alpha)^{-1}}(0,T;D(A_{\chi}
^{-\frac{1}{2}}))$. 
Following the argumentation of Theorem \ref{t3}, we can show that
$$\begin{aligned}
&\left\langle \partial_tu(t,\cdot),\phi_{\chi,n_0}\right\rangle
_{-\frac{1}{2}}\\
&= -(-1)^\chi\int_0^ts^{\alpha-1}E_{\alpha,\alpha}(-\lambda_{\chi,n_0}s^\alpha)
\left\langle \partial_tf(t-s,\cdot),\tau_{\chi}^*\phi_{\chi,n_0}
\right\rangle_{L^2(\partial\Omega)}ds\\
&\ \ \ 
+ \int_0^ts^{\alpha-1}E_{\alpha,\alpha}(-\lambda_{\chi,n_0}s^\alpha)
\left\langle \rho^{-1}\partial_tF(t-s,\cdot),\phi_{\chi,n_0}
\right\rangle ds\\
&\ \ \ + b_0 t^{\alpha-1}E_{\alpha,\alpha}(-\lambda_{\chi,n_0}t^\alpha).
\end{aligned}$$
On the other hand, since $E_{\alpha,\alpha}(-\lambda_{\chi,n_0}t^\alpha)\in
\mathcal C[0,T]$ and $E_{\alpha,\alpha}(0)=\frac{1}{\Gamma(\alpha)}>0$, 
we see that there exists $t_0\in(0,T)$ and $c_0>0$ such that 
$$
\inf_{t\in[0,t_0]}|E_{\alpha,\alpha}(-\lambda_{\chi,n}t^\alpha)|=c_0.
$$
Therefore, for almost all $t\in(0,t_0)$, we have
$$\begin{aligned}
& |\left\langle \partial_tu(t,\cdot),\phi_{\chi,n_0}\right\rangle
_{-\frac{1}{2}}|\\
\geq& c_0\abs{b_0} t^{\alpha-1}-\abs{-(-1)^\chi\int_0^ts^{\alpha-1}
E_{\alpha,\alpha}(-\lambda_{\chi,n_0}s^\alpha)\left\langle \partial_tf(t-s,\cdot),\tau_{\chi}^*\phi_{\chi,n_0}\right\rangle_{L^2(\partial\Omega)}ds}\\
&-\abs{\int_0^ts^{\alpha-1}E_{\alpha,\alpha}(-\lambda_{\chi,n_0}s^\alpha)\left\langle \rho^{-1}\partial_tF(t-s,\cdot),\phi_{\chi,n_0}\right\rangle ds}.
\end{aligned}$$
Moreover, since $f\in\mathcal C^\infty([0,T]\times \partial\Omega)$, 
$F\in\mathcal C^\infty([0,T]\times \overline{\Omega})$, it is clear that 
$$\begin{aligned}
c_1 :=&\norm{-(-1)^\chi\int_0^ts^{\alpha-1}E_{\alpha,\alpha}
(-\lambda_{\chi,n_0}s^\alpha)\left\langle \partial_tf(t-s,\cdot),\tau_{\chi}^*
\phi_{\chi,n_0}\right\rangle_{L^2(\partial\Omega)}ds}_{L^\infty(0,T)}\\
& + \norm{\int_0^ts^{\alpha-1}E_{\alpha,\alpha}(-\lambda_{\chi,n_0}s^\alpha)
\left\langle \rho^{-1}\partial_tF(t-s,\cdot),\phi_{\chi,n_0}\right\rangle ds}
_{L^\infty(0,T)}<\infty.
\end{aligned}$$
Thus, for almost all $t\in(0,t_0)$, we have
$$
|\left\langle \partial_tu(t,\cdot),\phi_{\chi,n_0}\right\rangle_{-\frac{1}{2}}|
\geq c_0\abs{b_0} t^{\alpha-1} - c_1
$$
and condition \eqref{r1a} clearly implies that
$$
|\left\langle \partial_tu(t,\cdot),\phi_{\chi,n_0}\right\rangle|
\notin L^{(1-\alpha)^{-1}}(0,T).
$$
Thus, we have $u\notin W^{1,(1-\alpha)^{-1}}(0,T;D(A_{\chi}^{-\frac{1}{2}}))$,
which completes the proof of the first statement of Proposition \ref{p1}.

For the second statement of Proposition \ref{p1}, let us consider 
$f\in\mathcal C^\infty([0,T]\times \partial\Omega)$, 
$F\in\mathcal C^\infty([0,T] \times \overline{\Omega})$ 
and $u_0\in\mathcal C^\infty(\overline{\Omega})$ satisfying \eqref{com1} 
and let \eqref{eq1} admit a unique solution $u\in W^{2,1}(0,T;D(A_{\chi}^{-\frac{1}{2}}))$. Assume also that \eqref{tt3a} is not fulfilled for $m=2$. 
Then we first show that there exists $n_1\in\mathbb N$ such that
\bel{r2c}
-(-1)^\chi\left\langle \partial_tf(0,\cdot),\tau_{\chi}^*\phi_{\chi,n_1}
\right\rangle_{L^2(\partial\Omega)}
+ \left\langle \rho^{-1}\partial_tF(0,\cdot),\phi_{\chi,n_1}\right\rangle
\neq 0.\ee
Indeed, assuming the contrary, we deduce that
$$\lambda_{\chi,k}\left\langle A_{\chi}^{-1}\rho^{-1}\partial_tF(0,\cdot),\phi_{\chi,k}\right\rangle=-(-1)^\chi\left\langle -\partial_tf(0,\cdot),\tau_{\chi}^*\phi_{\chi,k}\right\rangle_{L^2(\partial\Omega)},\quad k\in\mathbb N.$$
Then, following formula \eqref{eleq1}, we deduce that $G=A_{\chi}^{-1}\rho^{-1}\partial_tF(0,\cdot)$ solves the boundary value problem
$$\left\{ \begin{array}{rcll} 
 \cA G(x)& = & 0, & x\in   \Omega,\\
\tau_{\chi} G(x) & = & -\partial_tf(0,x), & x \in  \pd \Omega. 
\end{array}
\right.
$$
On the other hand, since $\rho^{-1}\partial_tF(0,\cdot)\in L^2(\Omega)$
and $G=A_{\chi}^{-1}\rho^{-1}\partial_tF(0,\cdot)$, we obtain 
$\partial_tF(0,\cdot) =\rho A_{\chi} G=\cA G=0$ and $\partial_tf(0,\cdot)
= -\tau_{\chi} G=0$. This contradicts that \eqref{tt3a} is not fulfilled 
for $m=2$. Thus, there exists $n_1\in\mathbb N$ such that \eqref{r2c} 
holds true. Repeating the arguments for Theorem \ref{t3}, in view of 
\eqref{com1}, one can verify that
$$\begin{aligned}
&\left\langle \partial_t^2u(t,\cdot),\phi_{\chi,n_1}\right\rangle_{-\frac{1}{2}}\\
&=-(-1)^\chi\int_0^ts^{\alpha-1}E_{\alpha,\alpha}(-\lambda_{\chi,n_1}
s^\alpha)\left\langle \partial_t^2f(t-s,\cdot),\tau_{\chi}^*\phi_{\chi,n_1}
\right\rangle_{L^2(\partial\Omega)}ds\\
&\ \ \ +\int_0^ts^{\alpha-1}E_{\alpha,\alpha}(-\lambda_{\chi,n_1}
s^\alpha)\left\langle \rho^{-1}\partial_t^2F(t-s,\cdot),\phi_{\chi,n_1}
\right\rangle ds\\
&\ \ \ + [-(-1)^\chi\left\langle \partial_tf(0,\cdot),\tau_{\chi}^*
\phi_{\chi,n_1}\right\rangle_{L^2(\partial\Omega)}+\left\langle \rho^{-1}\partial_tF(0,\cdot),\phi_{\chi,n_1}\right\rangle] t^{\alpha-1}E_{\alpha,\alpha}
(-\lambda_{\chi,n_1}t^\alpha).
\end{aligned}$$
Therefore, similarly to the proof of the first statement of Proposition 
\ref{p1},  by \eqref{r2c} we deduce that
$\left\langle \partial_t^2u(t,\cdot),\phi_{\chi,n_1}\right\rangle
_{-\frac{1}{2}}\notin L^{(\alpha-1)^{-1}}(0,T)$, which implies that $\partial_t^2u\notin L^{(\alpha-1)^{-1}}(0,T; D(A_{\chi}^{-\frac{1}{2}}))$. 
Thus the proof of the proposition is comppleted.
\qed
\\

\textbf{Proof of Proposition \ref{p2}.} 
We start with the proof of the first statement.
For this purpose, we fix $f\in\mathcal C^\infty([0,T]\times \partial\Omega)$,
$F\in\mathcal C^\infty([0,T]\times \overline{\Omega})$ and 
$u_0, u_1\in\mathcal C^\infty(\overline{\Omega})$ such that  
\eqref{com1} is not fulfilled. Then, there exists $n_0\in \mathbb N$ 
such that \eqref{r1a} is not fulfilled. 

Let us assume that the solution $u$ to \eqref{eq1} is lying in $ W^{2,1}(0,T;D(A_{\chi}^{-\frac{1}{2}}))$. We will prove that $u\notin W^{2,(2-\alpha)^{-1}}(0,T;D(A_{\chi}^{-\frac{1}{2}}))$. Combining \cite[Lemma 3.2]{SY} 
with Proposition \ref{p1}, we deduce that
$$\begin{aligned}
&\left\langle \partial_t^2u(t,\cdot),\phi_{\chi,n_0}\right\rangle
_{-\frac{1}{2}}\\
&= -(-1)^\chi\int_0^ts^{\alpha-1}E_{\alpha,\alpha}(-\lambda_{\chi,n_0}
s^\alpha)\left\langle \partial_t^2f(t-s,\cdot),\tau_{\chi}^*\phi_{\chi,n_0}
\right\rangle_{L^2(\partial\Omega)}ds\\
&\ \ \ +\int_0^ts^{\alpha-1}E_{\alpha,\alpha}(-\lambda_{\chi,n_0}s^\alpha)
\left\langle \rho^{-1}\partial_t^2F(t-s,\cdot),\phi_{\chi,n_0}
\right\rangle ds\\
&\ \ \ + d_0t^{\alpha-2}E_{\alpha,\alpha-1}(-\lambda_{\chi,n_0}t^\alpha) 
+ e_0t^{\alpha-1}E_{\alpha,\alpha}(-\lambda_{\chi,n_0}t^\alpha),
\end{aligned}$$
where 
$$
d_0=-(-1)^\chi \left\langle f(0,\cdot),\tau_{\chi}^*\phi_{\chi,n_0}
\right\rangle_{L^2(\partial\Omega)}
+ \left\langle \rho^{-1}F(0,\cdot),\phi_{\chi,n_0} \right\rangle
-\lambda_{\chi,n_0}\left\langle  u_0,\phi_{\chi,n_0}\right\rangle
$$
and
$$
e_0=-(-1)^\chi \left\langle \partial_tf(0,\cdot),\tau_{\chi}^*\phi_{\chi,n_0}
\right\rangle_{L^2(\partial\Omega)}
+ \left\langle \rho^{-1}\partial_tF(0,\cdot),\phi_{\chi,n_0} \right\rangle
- \lambda_{\chi,n_0}\left\langle  u_1,\phi_{\chi,n_0}\right\rangle.
$$
Therefore, in a similar way to Proposition \ref{p1}, one can show that there 
exists $t_1\in(0,T)$, $c_2, c_3>0$ such that
$$
\abs{\left\langle \partial_t^2u(t,\cdot),\phi_{\chi,n_0}\right\rangle
_{-\frac{1}{2}}}\geq c_3\abs{d_0}t^{\alpha-2}-c_2,\quad t\in(0,t_1),
$$
 which proves that  $u\notin W^{2,(2-\alpha)^{-1}}(0,T;D(A_{\chi}^{-\frac{1}{2}}))$. The rest part of the proposition can be proved similarly and is omitted.
\qed

\section*{Acknowledgments}
This work was supported by Grant-in-Aid for Scientific Research (S)
The first author is partially supported by  the French National
Research Agency ANR (project MultiOnde) grant ANR-17-CE40-0029.
The second author is partly supported by  
by The National Natural Science Foundation of China 
(no. 11771270, 91730303).
This work was prepared with the support of the "RUDN University Program 5-100".

 
\end{document}